\def\NZQ{\Bbb}               % the font for N,Z,Q,R,C
\def\NN{{\NZQ N}}
\def\ZZ{{\NZQ Z}}
\def\D{{\Delta}}
\def\mm{{\frak m}}
\def\F{{\mathcal F}}
\def\H{{\mathcal H}}
\def\G{{\mathcal G}}
\def\a{{\bold a}}
\def\b{{\bold b}}
\def\e{{\bold e}}
\def\1{{\mathbf 1}}
\def\0{{\mathbf 0}}
\def\opn#1#2{\def#1{\operatorname{#2}}} % to make operators
\opn\dist{dist} 
\opn\diam{diam} 
\opn\depth{depth}
\opn\supp{supp}
\opn\lk{lk}
\opn\st{st}
\opn\height{ht}
\opn\N{{\mathcal N}}
\opn\F{{\mathcal F}}
\newtheorem{Theorem}{Theorem}[section]
\newtheorem{Lemma}[Theorem]{Lemma}
\newtheorem{Corollary}[Theorem]{Corollary}
\newtheorem{Proposition}[Theorem]{Proposition}
\newtheorem{Remark}[Theorem]{Remark}
\newtheorem{Example}[Theorem]{Example}
\begin{document}
\title{On the associated primes and the depth\\ of the second power \\ of squarefree monomial ideals}

\author{Naoki Terai}
\address{Department of Mathematics, Faculty of Culture
and Education, Saga University, Saga 840--8502, Japan.} 
\email{terai@cc.saga-u.ac.jp}

\author{Ngo Viet Trung}
\address{Institute of Mathematics, Vietnam Academy of Science and Technology, 18 Hoang Quoc Viet, Hanoi, Vietnam}
\email{nvtrung@math.ac.vn}

\subjclass[2000]{13C05, 13C15, 13F55}
\keywords{Squarefree monomial ideal, power, associated prime, depth, edge ideal, cover ideal, hypergraph, cycles}

\thanks{Part of the paper was written when the authors participated to a SQuaRE program at AIM, Palo Alto, in October 2012. 
The authors would like to thank AIM for its support and hospitality. The second author is supported by the National Foundation of  
Science and Technology Development.} 

\maketitle

\begin{abstract}
We present combinatorial characterizations for the associated primes of the second power 
of squarefree monomial ideals and criteria for this power to have positive depth or depth greater than one.
\end{abstract}

\section*{Introduction}

Let $I$ be a squarefree monomial ideal in a polynomial ring $R$ over a field.
We are interested in combinatorial characterizations of the associated primes and of the depth of $I^t$, $t \ge 2$.
A squarefree monomial ideal can be viewed either as the edge ideal or the cover ideal of a hypergraph 
or as the Stanley-Reisner ideal of a simplicial complex. The problem is to describe the associated primes and the depth of $I^t$ in terms of the associated hypergraph or simplicial complex. There have been many works on the asymptotic behavior of the associated primes and the depth of $I^t$ for $t$ large enough (see e.g.  \cite{CMS}, \cite{FHV}, \cite{FHV2}, \cite{HH}, \cite{HM},  \cite{MMV}) but {\em not much is known for a fixed power} $I^t$.  
So far one could only characterize the associated primes of $I^2$ for  the case $I$ is the cover ideal of a graph \cite{FHV1}.
There was also a subtle description of the associated primes of $I^t$ for the cover ideal of a hypergraph  in \cite{FHV2}.
But this description is not formulated directly in terms of the given hypergraph.  
The depth of $I^t$  (even of $I^2$) hasn't been characterized until now except for the case $\depth R/I^t = \dim R/I^t$, i.e.~$I^t$ is Cohen-Macaulay \cite{MT1}, \cite{MT2}, \cite{RTY1}, \cite{RTY2}, \cite{TT}.
Note that the characterization of the associated primes of $I^t$ can be reduced to the case $\depth R/I^t = 0$,
when the maximal homogeneous ideal is an associated prime of $I^t$. \par

It is well known that the depth can be characterized by means of local cohomology modules. 
For a monomial ideal, one can use Takayama's formula \cite{Ta} which expresses the local cohomology modules of 
the factor ring in terms of certain simplicial complexes.
Our novel finding is that these complexes can be described in terms of the associated primes of the ideal.
We shall see below that this approach works best for the depth of the second power of a squarefree monomial ideal.
The cases of higher powers are more complicated, and we will not deal with them in this paper.
However, we believe that our approach would provide a systematic method to study the associated primes and the depth of any power
of a square free monomial ideal.
\par

Let $I$ be the edge ideal of a hypergraph $\H$.  
Let $\widetilde{I^2}$ denote the saturation of $I^2$.
We show that every monomial of $\widetilde{I^2} \setminus I^2$
corresponds to a distinguished subset of the vertex set, which we call a 2-saturating set. 
From this it follows that $\depth R/I^2 > 0$ if and only if $\H$ has no 2-saturating set (Theorem \ref{positive depth}).
A 2-saturating set is closely related to a special triangle of $\H$, a notion in hypergraph theory 
which generalizes the notion triangle of a graph.
If $\H$ is a graph, a 2-saturating set is nothing else but a dominating triangle, where 
a set $U$ of vertices is called dominating if every vertex of $\H$ is adjacent to at leat one vertex of $U$. 
As a consequence, $\depth R/I^2 > 0$ if and only if the graph has no dominating triangle (Theorem \ref{depth > 0 graph}). \par

Using the above result we can characterize an associated prime of $I^2$ combinatorially as
a cover of $\H$ on which the induced subhypergraph has a 2-saturating set (Theorem \ref{asso}). 
We also show how to find such covers and  describe 
the covers which correspond to the embedded associated primes of $I^2$.
As an application we give a combinatorial criterion for $I^{(2)} = I^2$, 
where $I^{(2)}$ denotes the second symbolic power of $I$. This criterion is different than 
the criterion for $I^{(2)} = I^2$ found by Rinaldo, Terai and Yoshida in \cite{RTY2}. 
If $\H$ is a graph, we can characterize an associated prime of $I^2$ combinatorially as a minimal cover 
or a cover which is minimal among the covers containing the neighborhood of a triangle of $\H$ (Theorem \ref{graph asso}). 
This provides a simple way to compute all associated primes of $I^2$. \par

We also show that $\diam \D^{(1)} \le 2$ if $\depth R/I^2 > 1$, 
where $\D^{(1)}$ denotes the one-dimensional skeleton of the simplicial complex whose Stanley-Reisner ideal is $I$
(Theorem \ref{depth > 1}). By a recent result of Rinaldo, Terai and Yoshida \cite{RTY2}, this condition is a criterion for 
$\depth R/I^{(2)} > 1$.  We couldn't find  a criterion for $\depth R/I^2 > 1$ in general. 
However, if $\H$ is a graph, this can be done in terms of the complementary graph $\overline \H$ of $\H$, namely,
$\depth R/I^2 > 1$ if and only if $\diam \overline \H \le 2$ and the induced graph of $\overline \H$ on the complement of the neighborhood of every triangle of $\H$ has at least two vertices and is connected (Theorem \ref{depth > 1 graph}). \par

In the case $I$ is the cover ideal of a graph, our method immediately yields a beautiful result of Francisco, Ha and Van Tuyl \cite{FHV1}, which characterizes the associated primes of $I^2$ combinatorially as edges and induced odd cycles.
Furthermore, we are able to give a combinatorial criterion for $\depth R/I^2 > 1$ 
in terms of forbidden substructures of the graph (Theorem \ref{depth > 1 cover}).  
As a consequence, we prove that $I$ is a complete intersection if $R/I^2$ satisfies Serre's condition $(S_2)$.
This gives a positive answer to a question of Rinaldo, Terai and Yoshida in \cite[Question 3.1]{RTY2}.
\par

The paper is organized as follows. In Section 1 we recall Takayama's lemma 
and discuss the consequences for the computation of the depth of an arbitrary monomial ideal. 
In Section 2 we study the condition $\depth R/I^2 > 0$. 
Section 3 is devoted to the description of the associated primes of $I^2$. 
In Section 4 we study the condition $\depth R/I^2  > 1$. 
The paper concludes with Section 5 where we apply our method to study the second power of the cover ideal of a graph. \par

After the submission of the paper the authors were informed by J. Herzog and T. Hibi that they independently obtained Theorem 2.8 in a recent preprint \cite{HH2}.

\section{Vanishing of local cohomology modules}

Let $I \neq 0$ be a monomial ideal in the polynomial ring $R = k[x_1,...,x_n]$ over a field $k$. 
Then $S/I$ is an $\NN^n$-graded algebra.
For every multidegree $\a \in \ZZ^n$ and $i \ge 0$ we denote by $H_\mm^i(R/I)_\a$ the $\a$-component of the $i$-th local cohomology module $H_\mm^i(S/I)$ of $S/I$ with respect to the maximal homogeneous ideal $\mm$ of $S$. \par

Inspired by a result of Hochster in the squarefree case \cite{Ho}, Takayama \cite{Ta} showed that $H_\mm^i(S/I)_\a$ is strongly related to the reduced homology $\tilde H_j(\D_\a(I),k)$ of a simplicial complex $\D_\a(I)$ on the vertex set $[n] = \{1,...,n\}$, which is defined as follows. \par

Let $\a = (a_1,...,a_n)$. Put $G_\a = \{i \in [n] \mid a_i < 0\}$ and $x^\a = x_1^{a_1} \cdots x_n^{a_n}$. Then
$$\D_\a(I)  := \{F \setminus G_\a \mid G_\a \subseteq F \subseteq [n],\  x^\a \not\in IR_F\},$$
where $R_F = R[x_i^{-1}\mid i \in F]$. This definition of $\D_\a(I)$ is taken from \cite [Lemma 1.2]{MT2} (see also \cite{MT1}), which is simpler than the original definition in \cite{Ta}. \par

For a set $F \subseteq [n]$ we denote by $x^F$ the monomial $\prod_{i\in F}x_i$.
For a simplicial complex $\D$ on the vertex set $[n]$ we denote by $I_\D$ the ideal of $R$ generated by the monomials $x^F$, $F \not\in \D$. 
This ideal is called the Stanley-Reisner ideal of $\D$.
Let $\D(I)$ denote the simplicial complex such that $\sqrt{I}$ is the Stanley-Reisner ideal of $\D(I)$. 
For $j = 1,...,n$, let $\rho_j(I)$ denote the maximum of positive $j$th coordinates of all vectors $\b \in \NN^n$ such that $x^\b$ is a minimal generator of $I$. 

\begin{Theorem} \label{Takayama} \cite[Theorem 1]{Ta}
$$\dim_kH_\mm^i(R/I)_\a = 
\begin{cases}
\dim_k\widetilde H_{i-|G_\a|-1}(\D_\a(I),k) & \text{\rm if }\ G_\a \in \D(I)\ \text{\rm and}\\
&\ \ \  \ a_j  < \rho_j(I)\ \text{\rm for}\ j=1,...,n,\\
0 & \text{\rm else. }
\end{cases} $$
\end{Theorem}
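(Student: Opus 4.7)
The plan is to compute $H_\mm^i(R/I)$ by means of the multigraded \v{C}ech complex
\[
\check C^\bullet: 0 \to R \to \bigoplus_{i} R_{x_i} \to \bigoplus_{|F|=2} R_F \to \cdots \to R_{x_1\cdots x_n} \to 0,
\]
and to identify its $\a$-strand, after tensoring with $R/I$, with a shift of the reduced chain complex of $\D_\a(I)$. The starting observation is that for $F\subseteq [n]$, the monomial $x^\a$ lies in $R_F$ iff $G_\a\subseteq F$, in which case it spans $(R_F)_\a$ over $k$; passing to $R_F/IR_F$ kills this generator precisely when $x^\a\in IR_F$. Thus $\dim_k(R_F/IR_F)_\a$ equals $1$ if $G_\a\subseteq F$ and $x^\a\notin IR_F$, and $0$ otherwise. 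Writing $F = G_\a \sqcup F'$ with $F'\subseteq [n]\setminus G_\a$, the nonzero summands in cohomological degree $i$ of $\check C^\bullet(R/I)_\a$ are indexed precisely by the faces $F'\in\D_\a(I)$ of cardinality $i-|G_\a|$.

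After fixing an ordering of $[n]$ and checking signs, the \v{C}ech differential on these summands matches the simplicial coboundary on the reduced augmented chain complex $\widetilde C_\bullet(\D_\a(I); k)$, shifted so that a face of cardinality $j+1$ sits in cohomological degree $|G_\a|+j+1$. Taking cohomology then produces the isomorphism
\[
H_\mm^i(R/I)_\a \;\cong\; \widetilde H_{i-|G_\a|-1}(\D_\a(I); k).
\]

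It remains to show that this formula already yields $0$ in the two cases grouped under ``else''. If $G_\a\notin\D(I)$, then $x^{G_\a}\in\sqrt I$, so some power of $x^{G_\a}$ lies in $I$; but each $x_i$ with $i\in G_\a$ is invertible in every $R_F\supseteq R_{G_\a}$, forcing $IR_F = R_F$ and hence $x^\a\in IR_F$ for every $F\supseteq G_\a$. Thus $\D_\a(I)$ has no face at all and its reduced homology vanishes. If instead $a_j\ge\rho_j(I)$ for some $j$, then $j\notin G_\a$ and $b_j\le a_j$ for every minimal generator $x^\b$ of $I$, which means that whether $x^\a\in IR_F$ depends only on $F\cup\{j\}$. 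Therefore $F'\in\D_\a(I)$ iff $F'\cup\{j\}\in\D_\a(I)$, so $\D_\a(I)$ is a cone with apex $j$ and once more has trivial reduced homology.

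The principal technical difficulty lies in the chain-complex identification of the second step: one has to verify that the signs of the \v{C}ech differential on the one-dimensional pieces $(R_F/IR_F)_\a$ really match the simplicial boundary signs under the reparametrization $F\leftrightarrow F\setminus G_\a$. Once this bookkeeping is settled, the two vanishing computations above are formal, provided one adopts the convention that the empty complex (no faces, not even $\emptyset$) has zero reduced homology in every degree, and the theorem follows from the standard identification of $H_\mm^\bullet$ with the cohomology of the \v{C}ech complex on any generating set of $\mm$.
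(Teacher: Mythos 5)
The paper gives no proof of this statement---it is quoted from Takayama \cite{Ta}---and your argument is precisely the standard proof of it (the same degreewise \v{C}ech computation that yields Hochster's formula): the $\a$-strand of the \v{C}ech complex on $x_1,\dots,x_n$ tensored with $R/I$ is the reduced cochain complex of $\D_\a(I)$ shifted by $|G_\a|+1$, giving the isomorphism for every $\a\in\ZZ^n$, and the two ``else'' cases then follow since $\D_\a(I)$ is the void complex when $G_\a\notin\D(I)$, and is a cone with apex $j$ (note $j\notin G_\a$ because $a_j\ge\rho_j(I)>0$) when $a_j\ge\rho_j(I)$. The proposal is correct, and the one step you leave implicit---matching signs under the reindexing $F\leftrightarrow F\setminus G_\a$---is indeed pure bookkeeping: rescaling the summand indexed by a face $F'$ by $\prod_{j\in F'}(-1)^{\#\{i\in G_\a\,\mid\, i<j\}}$ makes the \v{C}ech differential agree with the simplicial coboundary.
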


\begin{Remark} \label{initial}
{\rm We always have $\D_\a(I) \subseteq \D(I)$ \cite[Lemma 1.3]{MT1}.  
Moreover,  if we denote by $\0$ the vector of $\NN^n$ whose components are all zero and by $\e_1,...,\e_n$ the unit vectors of $\NN^n$, then
$\D_\0(I) =  \D_{\e_i}(I) = \D(I)$ for $i =1,...,n$ \cite[Example 1.4]{MT1}.}
\end{Remark}

For a homogeneous ideal $J$ in a polynomial ring $S$  let 
$$\widetilde J := \bigcup_{t \ge 1}(J :S^t_+),$$
where $S_+$ denotes the maximal homogenous ideal of $S$. 
The ideal $\widetilde J$ is called the {\it saturation} of $J$.
Using this notion we can describe the facets of  $\D_\a(I)$ as follows. \par

For a fixed $F \subseteq [n]$ let $S = k[x_i|\ i \not \in F]$ and $I_F = IR_F \cap S$. 
Then $I_F$ is the ideal of  $S$ obtained from the monomials of $I$ by setting $x_i = 1$ for all $i \in F$. 
Therefore, $x^\a \not\in  IR_F$ if and only if $x^{\a_F} \not\in I_F$, 
where $\a_F$ denotes the vector obtained from $\a$ by setting $a_i = 0$ for all $i \in F$.  \par

For a complex $\D$ we denote by $\F(\D)$ the set of the facets of $\D$.

\begin{Lemma}\label{facet}
 $\F(\D_\a(I))  := \{F \setminus G_\a \mid F \supseteq G_\a,\  x^{\a_F} \in \widetilde {I_F} \setminus I_F\}.$
\end{Lemma}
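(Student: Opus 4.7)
The plan is to unpack the definition of a facet directly using the defining condition of $\D_\a(I)$, and then translate the resulting condition into membership in $\widetilde{I_F}\setminus I_F$ by means of the explicit description of saturation in terms of ideal quotients.

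First, I would fix a face $F\setminus G_\a$ of $\D_\a(I)$, so that $G_\a\subseteq F$ and $x^{\a_F}\notin I_F$, and ask when this face is a facet. Since every strictly larger face must be of the form $(F\cup\{j\})\setminus G_\a$ with $j\notin F$, being a facet is equivalent to $x^{\a_{F\cup\{j\}}}\in I_{F\cup\{j\}}$ for every $j\notin F$. Note here that $j\notin F\supseteq G_\a$ forces $a_j\ge 0$, and hence $x^{\a_F}=x_j^{a_j}\cdot x^{\a_{F\cup\{j\}}}$ in the polynomial ring $k[x_i\mid i\notin F]$.

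Next, I would prove the key local equivalence: for each $j\notin F$,
\[
x^{\a_{F\cup\{j\}}}\in I_{F\cup\{j\}}\quad\Longleftrightarrow\quad x^{\a_F}\cdot x_j^N\in I_F\ \text{for some }N\ge 0.
\]
The backward direction is immediate by setting $x_j=1$. For the forward direction, any minimal monomial generator $g$ of $I_{F\cup\{j\}}$ dividing $x^{\a_{F\cup\{j\}}}$ lifts to a minimal generator $\tilde g=gx_j^s$ of $I_F$. If $s<a_j$, the identity $x^{\a_F}=x_j^{a_j}x^{\a_{F\cup\{j\}}}$ would give $\tilde g\mid x^{\a_F}$, contradicting $x^{\a_F}\notin I_F$; hence $s\ge a_j$, and then $\tilde g\mid x^{\a_F}\cdot x_j^{s-a_j}$, so $N=s-a_j$ works. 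This step is the main obstacle, since it uses the face hypothesis $x^{\a_F}\notin I_F$ in an essential way to rule out the "too small" case.

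Finally, I would combine this with the definition of saturation. Since $\widetilde{I_F}=\bigcup_{t\ge 1}(I_F:S_+^t)$ with $S_+=(x_i\mid i\notin F)$, and $x^{\a_F}\cdot S_+^t\subseteq I_F$ holds for some $t$ iff for every $j\notin F$ there is an $N_j$ with $x^{\a_F}x_j^{N_j}\in I_F$ (one direction is clear; the other follows by pigeonhole once $t$ is at least $n\cdot\max_j N_j$), the facet condition above is exactly $x^{\a_F}\in\widetilde{I_F}$. Together with $x^{\a_F}\notin I_F$, this gives $x^{\a_F}\in\widetilde{I_F}\setminus I_F$. Conversely, if $x^{\a_F}\in\widetilde{I_F}\setminus I_F$, the same equivalences show $F\setminus G_\a$ is a face no longer extendable, hence a facet. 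This yields the desired description of $\F(\D_\a(I))$.
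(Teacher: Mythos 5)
Your argument is correct and follows essentially the same route as the paper's proof: facet-ness is reduced to ``$F\setminus G_\a$ is a face and every one-vertex extension $F\cup\{j\}$, $j\notin F$, is a non-face,'' and that condition is then identified with $x^{\a_F}\in\widetilde{I_F}\setminus I_F$ through the variable-by-variable conditions $x^{\a_F}x_j^{N}\in I_F$ and the identity $\bigcap_{j\notin F}\bigcup_{t\ge 1}(I_F:x_j^t)=\widetilde{I_F}$. The only differences are cosmetic: you verify the single-variable step by lifting monomial generators (where the appeal to $x^{\a_F}\notin I_F$ is in fact unnecessary, since $s\le a_j$ would give $x^{\a_F}\in I_F$ and hence trivially $x^{\a_F}x_j^{0}\in I_F$), whereas the paper simply quotes $IR_{F\cup\{j\}}\cap S=\bigcup_{t\ge 1}(I_F:x_j^t)$, and you write out the pigeonhole argument that the paper leaves as ``easy to check.''
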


\begin{proof}	
By the definition of $\D_\a$,  $F \setminus G_\a \in \F(\D_\a(I))$ if and only if 
$x^\a \not\in  IR_F$ and $x^\a \in  IR_{F \cup \{i\}}$ for all $i \not\in F$.  
Since we can replace $x^\a$ by $x^{\a_F}$,  $F \setminus G_\a \in \F(\D_\a(I))$ if and only if 
$x^{\a_F} \not\in  I_F$ and $x^{\a_F} \in  IR_{F \cup \{i\}} \cap S$ for all $i \not\in F$.  
Note that $R_{F \cup \{i\}} = R_F[x_i^{-1}]$. Then $IR_{F \cup \{i\}} \cap S$ consists of elements 
of $S$ for which we can find a power $x_i^t$ such that their product belongs to $I_F$.
In other word, $IR_{F \cup \{i\}} \cap S = \bigcup_{t \ge 1}(I_F :x_i^t)$.
Since $S_+$ is generated by the variables $x_i$, $i \not\in F$, it is easy to check that
$$\bigcap_{i \not\in F}\big(\bigcup_{t \ge 1}(I_F :x_i^t)\big) = \bigcup_{t \ge 1}\big(I_F: S_+^t\big) = \widetilde {I_F}.$$
Thus, $F \in \F(\D_\a(I))$ if and only if $x^{\a_F} \in \widetilde {I_F} \setminus I_F$.
\end{proof}

\begin{Corollary}\label{empty}
Let $S = k[x_i|\ i \not\in G_\a]$ and $J = IR_{G_\a} \cap S$.
Let $\a_+$ denote the vector obtained from $\a$ by replacing every negative component by 0.
Then $\D_\a(I) = \{\emptyset\}$ if and only if $x^{\a_+} \in  \widetilde J \setminus J$.
\end{Corollary}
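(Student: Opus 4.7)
The plan is to derive the corollary as a direct specialization of Lemma~\ref{facet} to the face $F=G_\a$, combined with the elementary observation that a simplicial complex equals $\{\emptyset\}$ if and only if $\emptyset$ is one of its facets.

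First I would unwind the notation. Since $G_\a = \{i \in [n] \mid a_i < 0\}$, the vector $\a_{G_\a}$, obtained from $\a$ by zeroing out the coordinates in $G_\a$, is precisely $\a_+$. Moreover, with $F = G_\a$ we have $S = k[x_i \mid i \notin G_\a]$ and $I_F = IR_{G_\a}\cap S = J$. So the conditions ``$F \supseteq G_\a$, $F = G_\a$, $x^{\a_F} \in \widetilde{I_F}\setminus I_F$'' collapse to precisely ``$x^{\a_+}\in \widetilde{J}\setminus J$,'' and Lemma~\ref{facet} identifies this with the statement $\emptyset = G_\a\setminus G_\a \in \F(\D_\a(I))$.

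Next I would argue the elementary equivalence
\[
\D_\a(I) = \{\emptyset\} \iff \emptyset \in \F(\D_\a(I)).
\]
The forward implication is immediate. For the converse, suppose $\emptyset$ is a facet of $\D_\a(I)$. Since $\D_\a(I)$ is a simplicial complex, every nonempty face $F'$ would properly contain $\emptyset$, contradicting maximality of $\emptyset$ as a face. Hence $\D_\a(I)$ has no nonempty face, and since it contains $\emptyset$ (as $\emptyset$ is a facet), we conclude $\D_\a(I) = \{\emptyset\}$.

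Stringing the two equivalences together gives $\D_\a(I) = \{\emptyset\} \iff \emptyset \in \F(\D_\a(I)) \iff x^{\a_+}\in \widetilde{J}\setminus J$, which is the claim. No genuine obstacle arises; the only point to be careful about is the identifications $\a_{G_\a}=\a_+$ and $I_{G_\a}=J$, which are purely bookkeeping. Everything substantive is already packaged in Lemma~\ref{facet}, so the corollary is really just pointing out that $\emptyset$ being the unique facet of $\D_\a(I)$ is an interesting extreme case worth recording.
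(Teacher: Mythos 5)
Your proposal is correct and follows exactly the paper's own argument: the paper also reduces $\D_\a(I) = \{\emptyset\}$ to $\emptyset \in \F(\D_\a(I))$ and then invokes Lemma \ref{facet} with $F = G_\a$, using the identifications $\a_+ = \a_{G_\a}$ and $J = I_{G_\a}$. You merely spell out the elementary equivalence between $\D_\a(I)=\{\emptyset\}$ and $\emptyset$ being a facet, which the paper states without elaboration.
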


\begin{proof}	
$\D_\a(I) = \{\emptyset\}$ means that $\emptyset \in \F(\D_\a(I))$. 
By Lemma \ref{facet}, $\emptyset \in \F(\D_\a(I))$ if and only if $\a_{G_\a} \in \widetilde {I_{G_\a}} \setminus I_{G_\a}$.
Since $\a_+ = \a_{G_\a}$ and $J = I_{G_\a}$, this implies the assertion.
\end{proof}

We are interested in the case $\D_\a(I) = \{\emptyset\}$ because $\widetilde H_{-1}(\{\emptyset\},k) = k$.
By Theorem \ref{Takayama}, this implies $H_\mm^i(R/I)_\a \neq 0$ for $i = |G_\a|$ if $\D_\a(I) = \{\emptyset\}$. \par

\begin{Remark}
{\rm Lemma \ref{facet} shows that in order to compute the complexes $\D_\a(I)$ we need to know the associated primes of $I$. 
In fact, $\widetilde{I_F} \neq I_F$ if and only if $\widetilde{IR_F} \neq IR_F$.
If we denote by $P_F$ the ideal generated by the variables $x_i$, $i \in F$, 
then $\widetilde{IR_F} \neq IR_F$ if and only if $P_F$ is an associated prime of $I$.
Therefore, $\D_\a(I)$ is the link of $G_\a$ in a simplicial complex whose facets $F$ correspond to a set of associated primes of $I$. }
\end{Remark}

Now we will discuss some consequences of Theorem \ref{Takayama} for the vanishing of the local cohomology modules of $R/I$.\par

It is well-known that $H_\mm^0(R/I) = \widetilde I/I$ and that $\depth R/I > 0$ if and only if $H_\mm^0(R/I) = 0$ or, equivalently, $\widetilde I = I$.   We have $H_\mm^0(R/I)_\a  = \widetilde H_{-|G_\a|-1}(\D_\a(I),k)$.
Therefore, $H_\mm^0(R/I)_\a \neq 0$ if  and only if $|G_\a| = 0$ and $\D_\a(I) = \{\emptyset\}$.
By Corollary \ref{empty}, this is equivalent to  the condition $\a \in \NN^n$ and $x^\a \in \widetilde I \setminus I$, 
which complies with the well-known fact that $H_\mm^0(R/I) = \widetilde I/I$. 
So Theorem \ref{Takayama} does not give us much information on the vanishing of $H_\mm^0(R/I)$ 
except that $H_\mm^0(R/I)_\a = 0$ if $\a$ has a component $a_j \ge \rho_j(I)$. \par

On the other hand, we have the following combinatorial criterion for the vanishing of $H_\mm^1(R/I)$.

\begin{Proposition} \label{h1}
Let $I$ be a monomial ideal in $R$. Then $H_\mm^1(R/I)  = 0$ if and only if   $\D_\a(I)$ is connected for all $\a \in \NN^n$ and
$\depth R_j/I_j  > 0$ for all $j = 1,...,n$, where $R_j = k[x_i| i \neq j]$ and $I_j = IR[x_j^{-1}]\cap R_j$.
\end{Proposition}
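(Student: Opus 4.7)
The plan is to compute $H_\mm^1(R/I)_\a$ for each $\a \in \ZZ^n$ via Takayama's formula (Theorem \ref{Takayama}) and translate vanishing into combinatorial conditions. By that formula $H_\mm^1(R/I)_\a = 0$ automatically when $G_\a \notin \D(I)$ or some $a_i \ge \rho_i(I)$; in the remaining cases it is isomorphic to $\widetilde H_{-|G_\a|}(\D_\a(I),k)$. Since reduced simplicial homology vanishes in every degree $\le -2$, only $|G_\a| \in \{0,1\}$ can produce nonzero pieces, so I analyze these two cases separately and match them respectively to hypothesis~(a) and hypothesis~(b) of the proposition.

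The case $|G_\a|=0$, i.e.\ $\a \in \NN^n$, gives $\widetilde H_0(\D_\a(I),k)$ whose vanishing is exactly connectedness of $\D_\a(I)$. To reconcile hypothesis~(a), which is stated for all $\a \in \NN^n$, with the Takayama range $a_i < \rho_i(I)$, I verify that for $\a \in \NN^n$ having some $a_j \ge \rho_j(I)$ the complex $\D_\a(I)$ is automatically connected. The argument is a cone observation: for every minimal generator $x^\gamma$ of $I$ one has $\gamma_j \le \rho_j(I) \le a_j$, so when $j \notin F$ the $j$-th coordinate of the divisibility test $x^\gamma \mid x^\a$ in $R_F$ is vacuous. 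Hence $F \in \D_\a(I) \Leftrightarrow F \cup \{j\} \in \D_\a(I)$, showing $\D_\a(I)$ is either void or a cone with apex $j$; both are connected.

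The case $|G_\a|=1$, say $G_\a=\{j\}$, gives $\widetilde H_{-1}(\D_\a(I),k)$. Its nonvanishing is equivalent to $\D_\a(I) = \{\emptyset\}$ which, by Corollary \ref{empty}, is equivalent to $x^{\a_+} \in \widetilde{I_j} \setminus I_j$. I translate this to (b) as follows. Since $\widetilde{I_j}/I_j = H_\mm^0(R_j/I_j)$, any graded piece of $H_\mm^0(R_j/I_j)$ is supported on some $\b$ with $b_i < \rho_i(I_j) \le \rho_i(I)$ by Takayama applied inside $R_j$; then $\a = \b - \e_j$ lies in the Takayama range for $I$, and $\widetilde{I_j} \ne I_j$ forces $I_j \ne R_j$ and hence $\{j\} \in \D(I)$. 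Thus the $|G_\a|=1$ contributions all vanish iff $\widetilde{I_j} = I_j$ for every $j$, which is the condition $\depth R_j/I_j > 0$.

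The main delicate point is the cone observation in the $|G_\a|=0$ case. Without it, hypothesis~(a) as stated (for every $\a \in \NN^n$, not only those in the Takayama range) would appear strictly stronger than what Takayama's formula directly provides, and only the implication from (a)+(b) to $H_\mm^1(R/I)=0$ would go through. The remainder of the argument is routine bookkeeping through Theorem \ref{Takayama}, Corollary \ref{empty}, and the identification $H_\mm^0(R_j/I_j) = \widetilde{I_j}/I_j$.
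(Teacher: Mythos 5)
Your proof is correct and follows essentially the same route as the paper's: apply Takayama's formula degree by degree, observe that only $|G_\a| \in \{0,1\}$ can contribute, and match the case $|G_\a|=0$ to connectedness of $\D_\a(I)$ and the case $|G_\a|=1$ to $\depth R_j/I_j>0$ via Corollary \ref{empty}. Your cone argument handling $\a \in \NN^n$ with some $a_j \ge \rho_j(I)$ (where Takayama's formula gives vanishing but says nothing about $\D_\a(I)$ itself) is a worthwhile extra check that the paper's proof passes over silently.
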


\begin{proof}
The assertion follows from a more precise result, namely, that for all $\a \in \ZZ^n$, $H_\mm^1(R/I)_\a  = 0$ if and only if one of the following conditions is satisfied:\par
{\rm (a)}  $\a \in \NN^n$ and $\D_\a(I)$ is connected,\par
{\rm (b)} $\a$ has only a negative component, say $a_j$ with $\depth R_j/I_j  > 0$,\par
{\rm (c)} $\a$ has more than one negative component. \par
\noindent To prove this result we use the formula $H_\mm^1(R/I)_\a  = \widetilde H_{-|G_\a|}(\D_\a(I),k)$ of  Theorem \ref{Takayama}.
If $|G_\a| = 0$, i.e.~$\a \in \NN^n$,  $H_\mm^1(R/I)_\a  = \widetilde H_0(\D_\a(I),k)$, which vanishes if and only if $\D_\a(I)$ is connected. If $|G_\a| = 1$, i.e.~$\a$ has only a negative component, then $H_\mm^1(R/I)_\a  = \widetilde H_{-1}(\D_\a(I),k)$, which vanishes if and only if $\D_\a(I) \neq \{\emptyset\}$. If this negative component is $a_j$, this condition is satisfied if and only if $\depth R_j/I_j > 0$ by Lemma \ref{empty}.  If $|G_\a| \ge 2$, i.e.~$\a$ has more than one negative component,  we have $H_\mm^1(R/I)_\a = H_{-t}(\D_\a(I),k)$ for some $t \ge 2$, which always vanishes.
\end{proof} 

It is difficult to give a meaningful criterion for the vanishing of $H_\mm^i(R/I)$, $i \ge 2$, because
it will involve the vanishing of $\widetilde H_i(\D_\a(I),k)$ for $i \ge 1$, that cannot be 
characterized by purely combinatorial means.

\section{Positive depth}

In this section we study the edge ideal of a hypergraph. Recall that a hypergraph is a system of subsets of a set.  
The given set is called the {\em vertex set} and the subsets the {\em edges} of the hypergraph (see e.g. \cite{Du}).
Let $\H$ be a hypergraph on the vertex set $[n] =\{1,...,n\}$. 
One can associate with $\H$ the {\em edge ideal} $I(\H)$ which is generated by the monomials $x^F$, $F \in \H$. 
The ideal $I(\H)$ is a squarefree monomial ideal. Note that we are not restricted to clutters which are hypergraphs with no containments among their edges though every square monomial ideal is the edge ideal of a clutter. \par

Let $\H$ be a hypergraph on the vertex set $V = [n]$. Let $I$ be the edge ideal of $\H$.
We will describe first the monomials of $\widetilde{I^2} \setminus I^2$ 
and then give a combinatorial criterion for $\depth R/I^2 > 0$.
For that we need the following notions. \par

We call a set $U \subseteq V$ {\em decomposable} in $\H$ if $U$ can be partitioned into two subsets each of them contains an edge of $\H$. Otherwise we call $U$ {\em indecomposable} in $\H$. Note that every set $U \subseteq V$ is decomposable if $\H$ contains the empty set. If $\H$ does not contain the empty set,  $U$ is decomposable if and only if $U$ contains two disjoint edges of $\H$.  
We call  $U$ a {\em 2-saturating set} of $\H$ if $U$ is indecomposable in $\H$ and $U \setminus i$ is decomposable in $\H(i)$ for every vertex $i \in V$, where $U \setminus i := U \setminus \{i\}$ and $\H(i) :=  \{F \setminus i|\ F \in \H\}$.   

\begin{Lemma} \label{saturation}
Let $\a \in \NN^n$ and $U =  \{i|\ a_i \neq 0\}$. 
Then $x^\a \in \widetilde{I^2} \setminus I^2$ if and only if $\a \in \{0,1\}^n$ and $U$ is a 2-saturating set of $\H$.
\end{Lemma}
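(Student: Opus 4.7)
My plan is to translate the ideal-theoretic conditions on $x^\a$ into divisibility conditions involving pairs of edges of $\H$, and then read these off combinatorially. The central elementary fact is that $x^\a \in I^2$ if and only if there exist $E, F \in \H$ with $x^E x^F \mid x^\a$, equivalently $E \cup F \subseteq \{v : a_v \geq 1\}$ and $E \cap F \subseteq \{v : a_v \geq 2\}$. Likewise, $x^\a \in \widetilde{I^2}$ if and only if for every vertex $j$ there is a power $t_j$ with $x^\a x_j^{t_j} \in I^2$. From these two descriptions everything else will follow.

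For the forward direction, suppose $x^\a \in \widetilde{I^2} \setminus I^2$. First I show $\a \in \{0,1\}^n$: if some $a_i \geq 2$, choose $E, F \in \H$ with $x^E x^F \mid x^\a x_i^t$; the multiplicity of $x_i$ in $x^E x^F$ is at most $2 \leq a_i$, so the factor $x_i^t$ is superfluous and $x^E x^F \mid x^\a$, contradicting $x^\a \notin I^2$. Hence $x^\a = x^U$ with $U = \{i : a_i = 1\}$, and the fact $x^U \notin I^2$ says that no two disjoint edges of $\H$ lie inside $U$, i.e., $U$ is indecomposable. For each $i \in V$, apply $x^U x_i^{t_i} \in I^2$ to obtain $E, F \in \H$ with $x^E x^F \mid x^U x_i^{t_i}$. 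Since every $x_v$ with $v \neq i$ has exponent at most $1$ on the right, $E \cap F \subseteq \{i\}$ and $E \cup F \subseteq U \cup \{i\}$. If $i \notin E \cup F$, then $E$ and $F$ would be disjoint edges inside $U$, contradicting indecomposability; hence $i \in E \cup F$, and $E \setminus i, F \setminus i$ are disjoint edges of $\H(i)$ whose union lies inside $U \setminus i$. This witnesses that $U \setminus i$ is decomposable in $\H(i)$, so $U$ is $2$-saturating.

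For the reverse direction, assume $U$ is $2$-saturating. Indecomposability immediately yields $x^U \notin I^2$, since any $x^{F_1} x^{F_2}$ dividing the squarefree monomial $x^U$ would force $F_1, F_2$ to be disjoint edges inside $U$. To see $x^U \in \widetilde{I^2}$, fix $i \in V$; by hypothesis, $U \setminus i$ decomposes in $\H(i)$, producing edges $F_1, F_2 \in \H$ with $F_1 \cap F_2 \subseteq \{i\}$, $F_1 \cup F_2 \subseteq U \cup \{i\}$, and the sets $F_1 \setminus i, F_2 \setminus i$ lying in the two disjoint blocks of a partition of $U \setminus i$. Indecomposability of $U$ forces $i \in F_1 \cup F_2$, since otherwise $F_1, F_2$ would be disjoint edges inside $U$. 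Hence $x^{F_1} x^{F_2}$ divides $x^U x_i^2$ (the factor $x_i^2$ absorbs the worst case $i \in F_1 \cap F_2$), so $x^U x_i^2 \in I^2$. Since $i$ was arbitrary, $x^U \in \widetilde{I^2}$.

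The main obstacle I anticipate is not conceptual but bookkeeping: the distinguished vertex $i$ may or may not belong to $U$, and may appear in one, both, or neither of the chosen edges, and one must verify in every subcase that the resulting witnesses $F_1 \setminus i, F_2 \setminus i$ are genuinely disjoint edges of $\H(i)$ sitting inside $U \setminus i$. A separate sanity check for the degenerate case $\emptyset \in \H$ (where every set is decomposable and no indecomposable $U$ can exist) confirms the statement is vacuous on both sides.
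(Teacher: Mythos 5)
Your proof is correct, and it arrives at the same combinatorial translation as the paper but by a more elementary route. The paper first reduces to $\a \in \{0,1\}^n$ by quoting Takayama's formula: since $\rho_j(I^2)=2$, the graded piece $H_\mm^0(R/I^2)_\a=(\widetilde{I^2}/I^2)_\a$ vanishes whenever some $a_j\ge 2$. You instead observe directly that if $a_i\ge 2$ then any witness $x^Ex^F$ dividing $x^\a x_i^t$ already divides $x^\a$; this is self-contained and avoids local cohomology altogether. For the saturation condition the paper passes to the localization $R[x_i^{-1}]$ and uses the identity $I^2R[x_i^{-1}]\cap k[x_j\mid j\ne i]=J^2$, where $J$ is the edge ideal of $\H(i)$, so that decomposability of $U\setminus i$ in $\H(i)$ appears as membership of $x^{U\setminus i}$ in $J^2$; your divisibility bookkeeping ($E\cap F\subseteq\{i\}$ and $E\cup F\subseteq U\cup\{i\}$, hence $E\setminus i$ and $F\setminus i$ are disjoint edges of $\H(i)$ inside $U\setminus i$) unwinds exactly that identity by hand. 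Each approach has its merits: the paper's is shorter and fits the $\D_\a$ framework used throughout, while yours makes the lemma independent of Theorem 1.1. One small point you should state explicitly rather than leave to the ``bookkeeping'' remark: if one of the witnesses $E\setminus i$ or $F\setminus i$ is empty (i.e., $E=\{i\}$ is an isolated vertex), then $\emptyset\in\H(i)$ and $U\setminus i$ is decomposable by the paper's convention that any set is decomposable in a hypergraph containing the empty set, so the forward implication still goes through; similarly, ``contains two disjoint edges'' and ``decomposable'' coincide under that convention, which is what makes both directions of your translation legitimate.
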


\begin{proof}	
We have $\rho_j(I^2) = 2$ for all $j \in [n]$. 
Therefore, $H_\mm^0(R/I^2)_\a = 0$ for $\a \not\in \{0,1\}^n$ by Theorem \ref{Takayama}.
 Since $H_\mm^0(R/I^2) = \widetilde {I^2}/I^2$, 
$x^\a \in \widetilde{I^2} \setminus I^2$ if and only if $H_\mm^0(R/I^2)_\a \neq 0$. 
So we may assume that $\a \in \{0,1\}^n$.  \par

Under this assumption, $x^\a \not\in I^2$ if and only if $U$ is indecomposable in $\H$.
By the definition of the saturation, $x^\a \in \widetilde{I^2}$ if and only if 
for every $i \in [n]$, there exists $r \ge 0$ such that $x_i^rx^\a \in I^2$ 
or, equivalently, $x^{U \setminus i} \in I^2R[x_i^{-1}] \cap S$, 
where $S = k[x_j|\ j \neq i]$ and $x^{U \setminus i} = \prod_{j \in U \setminus i}x_j$. 
Let $J \subseteq S$ be the edge ideal of the hypergraph $\H(i)$.
Then $I^2R[x_i^{-1}] \cap S = J^2$. 
Therefore, $x^\a \in \widetilde{I^2}$ if and only if $U \setminus i$ is decomposable in $\H(i)$. 
\end{proof}

 \begin{Theorem} \label{positive depth}
Let $I$ be the edge ideal of a hypergraph $\H$. 
Then $\depth R/I^2 > 0$ if and only if $\H$ does not have any 2-saturating set.
\end{Theorem}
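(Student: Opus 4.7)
The plan is to observe that this theorem is essentially a corollary of Lemma~\ref{saturation} combined with two standard facts: that $H_\mm^0(R/I^2) = \widetilde{I^2}/I^2$, and that $\depth R/I^2 > 0$ is equivalent to $H_\mm^0(R/I^2) = 0$.

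First I would chain these equivalences: $\depth R/I^2 > 0$ iff $\widetilde{I^2}/I^2 = 0$ iff $\widetilde{I^2} = I^2$. Since $I^2$ is a monomial ideal, its saturation $\widetilde{I^2}$ is again a monomial ideal, so the equality $\widetilde{I^2} = I^2$ holds if and only if there is no monomial $x^\a$ lying in $\widetilde{I^2} \setminus I^2$.

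Next I would invoke Lemma~\ref{saturation}: such a monomial exists if and only if there exists a vector $\a \in \{0,1\}^n$ whose support $U = \{i : a_i = 1\}$ is a 2-saturating set of $\H$. Since assigning $\a = \sum_{i \in U} \e_i$ gives a bijection between subsets of $V$ and $\{0,1\}^n$-vectors, the existence of such an $\a$ is the same as the existence of a 2-saturating set of $\H$. Combining with the previous equivalence yields the theorem.

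There is really no serious obstacle here: once Lemma~\ref{saturation} has been established, the proof reduces to reading off the equivalence. The only minor point to check is that $\widetilde{I^2} \setminus I^2 = \emptyset$ as a set of monomials is the same as $\widetilde{I^2} = I^2$ as ideals, which is automatic since both ideals are $\NN^n$-graded. So the entire argument is a one-line deduction from Lemma~\ref{saturation}.
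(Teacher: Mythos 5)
Your proposal is correct and follows exactly the paper's route: the paper also deduces the theorem directly from Lemma \ref{saturation} together with the facts that $\depth R/I^2 > 0$ means $H_\mm^0(R/I^2) = 0$ and that $H_\mm^0(R/I^2) = \widetilde{I^2}/I^2$. Your extra remark that the comparison of $\widetilde{I^2}$ and $I^2$ reduces to monomials is a fine (and harmless) elaboration of the same one-line argument.
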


\begin{proof}	
By Lemma \ref{saturation}, every 2-saturating set of $\H$ corresponds to a monomial of $\widetilde {I^2}\setminus I^2$.
Since $\depth R/I^2 > 0$ means $H_\mm^0(R/I^2) = 0$, and $H_\mm^0(R/I^2) = \widetilde {I^2}/I^2$, the assertion follows.
\end{proof}

A 2-saturating set of $\H$ can be characterized in a more precise way by using terminologies of hypergraph theory. \par

For every subset $U \subseteq V$ one calls the subhypergraph $\H|_U := \{F \in \H|\ F \subseteq U\}$ the {\em section} (or {\em trace}) of $\H$ on $U$. A hypergraph is called {\em intersecting} if every pair of edges intersect. Note that a hypergraph is non-intersecting if it contains the empty set and that $U$ is indecomposable in $\H$ if and only if $\H|_U$ is intersecting. We call a hypergraph $\H$ {\it loosely intersecting} if it is intersecting but for every vertex $i$, there are two edges which intersects only at $i$. So the intersecting property is lost when we take out any vertex. More precisely, this means that the link of every vertex is not intersecting, where for a vertex $i$, the {\em link} of $i$ is the set $\lk_\H i :=  \{F \setminus  i|\ F \in \H, i \in F\}$. In the following we call a vertex {\em isolated} if this vertex is an edge of the hypergraph.

\begin{Lemma} \label{saturating}
$U$ is a 2-saturating set of $\H$ if and only if the following conditions are satisfied:\par
{\rm (a)} $\H|_U$ is loosely intersecting,\par
{\rm (b)} For every non-isolated vertex $i \not\in U$, $\lk_\H i|_U$ has two disjoint edges 
or an edge which is disjoint to an edge of $\H|_U$.  
\end{Lemma}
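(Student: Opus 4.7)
The plan is to reduce the ``2-saturating'' condition to a collection of conditions indexed by vertices $i \in V$ and analyze each case according to whether $i \in U$ or $i \notin U$. Indecomposability of $U$ in $\H$ is already equivalent to $\H|_U$ being intersecting (as noted in the paper), which supplies the intersecting part of (a); everything else comes from unpacking ``$U \setminus i$ is decomposable in $\H(i)$'' for each $i \in V$.

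First I would fix $i \in U$ and catalogue the edges of $\H(i)$ that lie inside $U \setminus i$. Any such edge has the form $F \setminus i$ for some $F \in \H$ with $F \setminus i \subseteq U$, which forces $F \subseteq U \cup \{i\} = U$, i.e.\ $F \in \H|_U$. So the relevant edges are precisely $\{F \setminus i : F \in \H|_U\}$, and $U \setminus i$ is decomposable in $\H(i)$ iff two of them are disjoint, i.e.\ iff two edges $F, F' \in \H|_U$ satisfy $F \cap F' \subseteq \{i\}$. Since $\H|_U$ is intersecting, this containment is an equality, and we recover exactly the ``looseness at $i$'' part of (a): two edges of $\H|_U$ containing $i$ and meeting only there.

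Next I would fix $i \notin U$, so $U \setminus i = U$, and split the edges of $\H(i)$ contained in $U$ into two types: those with $i \notin F$, which give $F \in \H|_U$, and those with $i \in F$, which give $F \setminus i \in \lk_\H i|_U$. A disjoint pair cannot sit entirely in $\H|_U$ (intersecting), so it must use $\lk_\H i|_U$, producing precisely the two alternatives in (b). The remaining subtlety is the isolated case: if $\{i\} \in \H$, then $\emptyset \in \lk_\H i|_U$, which makes $U$ trivially decomposable in $\H(i)$ regardless, and this is exactly why (b) is only imposed on non-isolated $i \notin U$.

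Because each step above is an equivalence, the two directions fall out simultaneously: (a) and (b) together are both necessary (from the decomposability analysis) and sufficient (by exhibiting the disjoint pair of edges of $\H(i)$ promised by the loose-intersection or the $\lk_\H i|_U$ witness, and completing it to a partition by distributing the remaining vertices arbitrarily). The only bookkeeping place where I expect to pause is around the empty edge: I need to check that $\H|_U$ intersecting already precludes $\emptyset \in \H|_U$ and that the isolated-vertex carve-out in (b) is exactly tight, so the conditions read as an equivalence rather than merely an implication.
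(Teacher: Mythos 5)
Your proof is correct and follows essentially the same route as the paper: both arguments reduce the 2-saturating condition to the per-vertex decomposability of $U\setminus i$ in $\H(i)$, split into the cases $i\in U$ and $i\notin U$, and handle isolated vertices via the empty edge, with your observation that every edge of $\H(i)$ inside $U\setminus i$ (for $i\in U$) comes from an edge of $\H|_U$ playing the role of the paper's small argument that a disjoint pair consisting of an edge of $\lk_\H i$ and an edge of $\H$ already forces decomposability in $\lk_\H i$. The empty-edge bookkeeping you flag works out exactly as you expect, so no gap remains.
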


\begin{proof}
We may assume that $\H|_U$ is intersecting. Under this assumption we only need to show the following statements:  \par
(a') $U$ is decomposable in $\H(i)$ for all $i \in U$ if and only if (a) is satisfied, \par
(b') $U$ is decomposable in $\H(i)$ for all $i \not\in U$ if and only if  (b) is satisfied. \par

First  we note that $\H(i) = \H \cup \lk_\H i$.
Since $U$ is indecomposable in $\H$, $U$ is decomposable in $\H(i)$ if and only if
$U$ is decomposable in $\lk_\H i$ or $U$ contains an edge of $\lk_\H i$ and an edge of $\H$ which are disjoint.
If $i$ is an isolated vertex of $\H$, $U$ is decomposable in $\lk_\H i$ because $\lk_\H i$ contains the empty set. 
These facts immediately imply (b'). Concerning (a') it suffices to show that $U$ is decomposable in $\lk_\H i$ for $i \in U$ 
if  $U$ contains an edge of $\lk_\H i$ and an edge of $\H$ which are disjoint. \par
 
Let $F \subseteq U$ be an edge of $\lk_\H i$ and $G \subset U$ an edge of $\H$ such that $F \cap G = \emptyset$. 
If $i \not\in G$, this implies $(F \cup  i ) \cap G =  \emptyset$. 
Since $F \cup  i \subseteq U$ is an edge of $\H$, this contradicts the assumption that $\H|_U$ is intersecting. 
Therefore, $i \in G$. Hence $G \setminus  i \in \lk_\H i$. Since $F \cap (G \setminus  i ) = \emptyset$, 
$U$ is decomposable in $\lk_\H i$, as required.
\end{proof} 

Comparing with the definition of a 2-saturating set, the above characterization has the advantage that 
condition (a) concerns only the section of $\H$ on $U$, which is easier to check, 
while condition (b) reflects the complicated interplay between this section and the vertices outside of $U$.

\begin{Remark} \label{small}
{\rm A 2-saturating set can be empty or consist of only one element.
By Lemma \ref{saturation}, that does happen if and only if $I$ is the maximal homogeneous ideal, 
which means that every vertex of $\H$ is isolated.  
If $I$ is not the maximal ideal, a 2-saturating set has at least three elements by the following property of loosely intersecting hypergraphs.}
\end{Remark}

One calls a sequence of three edges $F_1, F_2, F_3$ a {\em special triangle} 
if there are vertices $v_1 \in F_1 \cap F_2 \setminus F_3$, $v_2 \in F_2 \cap F_3 \setminus F_1$, $v_3 \in F_1 \cap F_3 \setminus F_2$. 
We say that  such a special triangle has {\em empty intersection} if $F_1 \cap F_2 \cap F_3 = \emptyset$.  

\begin{Lemma} \label{special}
Every loosely intersecting hypergraph that has more than one vertex contains a special triangle with empty intersection.
\end{Lemma}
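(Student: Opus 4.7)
The plan is a direct construction: invoking the loose intersecting property at two different vertices will harvest three edges that automatically arrange themselves into the desired special triangle.

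First, I would fix any vertex $v_1 \in V$ and use the loose intersecting condition at $v_1$ to obtain two edges $E_1, E_2 \in \H$ with $E_1 \cap E_2 = \{v_1\}$. These are two of the three edges of the triangle. To produce the third, I would pick some vertex $v \neq v_1$, available because $\H$ has more than one vertex, and apply the loose intersecting condition at $v$ to obtain edges $G_1, G_2 \in \H$ with $G_1 \cap G_2 = \{v\}$.

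The key step is to observe that $v_1$ cannot lie in both $G_1$ and $G_2$: otherwise $v_1 \in G_1 \cap G_2 = \{v\}$, forcing $v = v_1$. So I would choose $F \in \{G_1, G_2\}$ with $v_1 \notin F$; then $F$ is automatically distinct from both $E_1$ and $E_2$, since $v_1$ lies in the latter two but not in $F$. The intersecting property of $\H$ now supplies vertices $w_1 \in F \cap E_1$ and $w_2 \in F \cap E_2$, and since $v_1 \notin F$ both $w_1$ and $w_2$ differ from $v_1$. Combined with $E_1 \cap E_2 = \{v_1\}$, this forces $w_1 \in E_1 \setminus E_2$ and $w_2 \in E_2 \setminus E_1$.

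Thus $E_1, E_2, F$ form a special triangle witnessed by $v_1 \in E_1 \cap E_2 \setminus F$, $w_2 \in E_2 \cap F \setminus E_1$, and $w_1 \in E_1 \cap F \setminus E_2$, and its triple intersection is $\{v_1\} \cap F = \emptyset$. I do not foresee any real obstacle here: the hypothesis of more than one vertex is used only to secure a second vertex $v \neq v_1$, and the rest is bookkeeping with the definitions of ``intersecting'' and ``loose intersecting''.
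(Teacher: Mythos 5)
Your construction is essentially the paper's: fix $v_1$ together with two edges meeting exactly in $\{v_1\}$, invoke the hypothesis at a second vertex to get an edge $F$ avoiding $v_1$, and let the intersecting property supply the remaining two witness vertices; the paper's proof does the same, merely choosing its second vertex inside $E_2$ rather than arbitrarily. The only step you skip is the paper's opening reduction: its working formulation of \emph{loosely intersecting} is that the link of every vertex is non-intersecting, and the paper first rules out isolated vertices (an edge $\{v\}$ would force every edge to contain $v$, so the link of any other vertex could not have two disjoint edges) in order to conclude that every vertex really does admit two edges meeting exactly at it --- the property you invoke at $v_1$ and at $v$. If the definition is read literally as you read it, your proof is complete as written; under the link formulation you should add that short preliminary argument.
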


\begin{proof}	
Let $\G$ be a loosely intersecting hypergraph on a vertex set of more than one elements.
If $\G$ has an isolated vertex $v$, then every edge of $\G$ contains $v$ because $\G$ is intersecting.
Every vertex $w \neq v$ can not be an edge of $\H$ because it does not contain $v$.
Therefore, $\lk_\H w$ does not contain the empty set. Since $\lk_\H w$ is non-intersecting, 
$\lk_\H w$ has two disjoint edges, which contradicts the fact that they both contain $v$.
Thus, $\H$ has no isolated vertex. As we have just seen, this implies that the link of every vertex has two disjoint edges.
\par

Let $v_1$ be a vertex of $\G$. Since $\lk_\H i$ has two disjoint edges,  
$\G$ has two edges $F_1, F_2$ properly containing $v_1$ 
such that $F_1 \setminus v_1, F_2 \setminus v_1$ are disjoint. 
Choose $v_2  \in F_2$, $v_2 \neq v_1$. Then $\G$ also has two edges $G_1, G_2$ containing $v_2$ 
such that $G_1 \setminus v_2, G_2 \setminus v_2$ are disjoint. 
Therefore, both $G_1, G_2$ can not contain $v_1$. Let $G_1$ be the edge not containing $v_1$.
Since $\G$ is intersecting, there is a vertex $v_3 \in F_1 \cap G_1$. 
Clearly, $v _2 \not\in F_2$, $v_2 \not\in F_1$, $F_1 \cap F_2 \cap F_3 = \emptyset$. 
Hence $F_1, F_2, F_3$ form a special triangle with empty intersection. 
\end{proof}	

A special triangle in a graph is just a triangle, which always has an empty intersection.
In the following {\em we always assume that a graph is a collection of subsets of two elements of a vertex set $V$}.

\begin{Corollary} \label{triangle}
A graph is loosely intersecting if and only if it is a triangle.
\end{Corollary}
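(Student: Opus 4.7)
The plan is to prove the two directions separately, using Lemma \ref{special} for the nontrivial implication.

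For the easy direction, I would take a triangle with vertex set $\{v_1,v_2,v_3\}$ and edges $F_1=\{v_2,v_3\}$, $F_2=\{v_1,v_3\}$, $F_3=\{v_1,v_2\}$. Any two of these share exactly one vertex, so the triangle is intersecting. For each vertex $v_i$, the two edges passing through $v_i$ are precisely the two $F_j$ containing $v_i$, and they meet only in $v_i$. Hence the triangle is loosely intersecting.

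For the converse, let $\G$ be a loosely intersecting graph. Since a graph has $2$-element edges, the existence of two edges meeting at any vertex forces $\G$ to have more than one vertex, so Lemma \ref{special} produces a special triangle $F_1,F_2,F_3$ in $\G$ with empty intersection, together with witnesses $v_1\in F_1\cap F_2\setminus F_3$, $v_2\in F_2\cap F_3\setminus F_1$, $v_3\in F_1\cap F_3\setminus F_2$. The empty intersection condition makes $v_1,v_2,v_3$ pairwise distinct, and because each $F_i$ has exactly two elements and already contains two of the $v_j$'s, I read off $F_1=\{v_1,v_3\}$, $F_2=\{v_1,v_2\}$, $F_3=\{v_2,v_3\}$. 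So $\G$ contains the combinatorial triangle on $\{v_1,v_2,v_3\}$.

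It then remains to rule out any further edge, which is where I expect the only real work. Suppose $e\in\G$ is an edge distinct from $F_1,F_2,F_3$. Since $\G$ is intersecting, $e$ must meet each $F_i$. If $e\subseteq\{v_1,v_2,v_3\}$, then $e$ is one of the three triangle edges, a contradiction. Otherwise $e=\{v_i,w\}$ for some $i\in\{1,2,3\}$ and some $w\notin\{v_1,v_2,v_3\}$; but then $e$ fails to meet the unique edge among $F_1,F_2,F_3$ that does not contain $v_i$, again a contradiction. (The case $e\cap\{v_1,v_2,v_3\}=\emptyset$ is immediately impossible since $e$ must meet $F_1$.) Thus $\G=\{F_1,F_2,F_3\}$ is precisely a triangle, completing the proof.
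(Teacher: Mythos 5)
Your proposal is correct and follows essentially the same route as the paper: invoke Lemma \ref{special} to extract a triangle (noting the graph has more than one vertex), then use the intersecting property to force every edge to coincide with one of the three triangle edges, plus the easy converse. Your case analysis just spells out in detail the paper's one-line observation that an edge meeting all three sides of a triangle must be one of them.
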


\begin{proof}	
Let $\G$ be an loosely intersecting graph. Then $\G$ has a triangle by Lemma \ref{special}.
Since every edge of $\G$ must intersect the three edges of the triangle, it must coincide with one of them.
This show that $\G$ is a triangle. Obviously, every triangle is loosely intersecting.
\end{proof} 

In graph theory, one calls a set $U$ of vertices {\em dominating} if every vertex is adjacent to at leat one vertex of $U$ (see e.g. \cite{Ha}). 
For this reason we say that a triangle of a graph is dominating if their vertices form a dominating set.

\begin{Lemma} \label{dominating}
A set of vertices of a graph is 2-saturating if and only if  they are the vertices of a dominating triangle.
\end{Lemma}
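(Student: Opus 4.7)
My plan is to apply Lemma \ref{saturating} and translate its two conditions into graph-theoretic language. First I would observe that since every edge of a graph has exactly two elements, no vertex of $\H$ is isolated in the paper's sense (that is, no vertex is itself a one-element edge), so condition (b) of Lemma \ref{saturating} has to be checked at every vertex $i \notin U$. By Corollary \ref{triangle}, condition (a) is equivalent to $\H|_U$ being a triangle; in particular $|U|=3$ and all three $2$-element subsets of $U$ are edges of $\H$.

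Next I would reduce condition (b) to a neighborhood statement. For any vertex $i$, the link $\lk_\H i$ consists of the singletons $\{j\}$ with $\{i,j\} \in \H$, so the edges of $\lk_\H i|_U$ are precisely the singletons $\{j\}$ with $j \in U$ adjacent to $i$. If $i$ has no neighbor in $U$, then $\lk_\H i|_U$ is empty and condition (b) fails. Conversely, if $i$ has a neighbor $j \in U$, then writing $U \setminus \{j\} = \{u,v\}$, the edge $\{u,v\}$ lies in $\H|_U$ (since $\H|_U$ is a triangle) and is disjoint from $\{j\}$, so the second alternative of condition (b) is satisfied. Thus, in the graph case, condition (b) reduces to the statement that every vertex $i \notin U$ has at least one neighbor in $U$.

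Putting the two steps together, $U$ is $2$-saturating in $\H$ if and only if $U$ is the vertex set of a triangle of $\H$ and every vertex outside $U$ is adjacent to some vertex of $U$. Since the vertices of the triangle are already mutually adjacent, this is precisely the condition that $U$ is the vertex set of a dominating triangle of $\H$.

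The only delicate point is the analysis of condition (b): at first sight it offers two alternatives ("two disjoint edges in $\lk_\H i|_U$" or "an edge of $\lk_\H i|_U$ disjoint from an edge of $\H|_U$"), but in a graph these collapse to the single requirement that $i$ has one neighbor in $U$, because the edge of the triangle opposite to that neighbor automatically witnesses the second alternative, making the first alternative redundant.
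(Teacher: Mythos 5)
Your proof is correct and follows essentially the same route as the paper's: both reduce the statement to Lemma \ref{saturating} combined with Corollary \ref{triangle}, and both verify condition (b) by noting that a neighbor $j \in U$ of a vertex $i \notin U$ gives a singleton edge of $\lk_\H i|_U$ disjoint from the opposite edge of the triangle. Your explicit remark that the first alternative of condition (b) is redundant in the graph case is a minor extra observation, not a difference in method.
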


\begin{proof}	
Let $U$ be a 2-saturating set of a graph $\H$. By Lemma \ref{saturating}(a) and 
Corollary \ref{triangle}, $U$ is the vertex set of a triangle. 
By Lemma \ref{saturating}(b), every vertex is adjacent  to at leat one vertex of $U$. 
Hence $U$ is dominating. \par
Conversely, assume that $U$ is the set of vertices of a dominating triangle. 
This triangle is a loosely intersecting section of $\H$ by Corollary \ref{triangle}.
Since $U$ is a dominating set,  every vertex $i \not\in U$ is adjacent to at leat one vertex $j$ of $U$.
This vertex $j$ is an edge of $\lk_\H i$ and disjoint to the edge of the triangle not containing $j$. 
Therefore, $U$ is 2-saturating by Lemma \ref{saturating}.  
\end{proof}

Theorem \ref{positive depth} together with Lemma \ref{dominating} yield the following result.

\begin{Theorem} \label{depth > 0 graph}
Let $I$ be the edge ideal of a graph $\H$. 
Then $\depth R/I^2 > 0$ if and only if  $\H$ has no dominating triangle. 
\end{Theorem}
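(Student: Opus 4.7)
The plan is to combine the two ingredients already established in the section and apply them directly. By Theorem \ref{positive depth}, we know that for any hypergraph $\H$ (and in particular any graph) with edge ideal $I$, the condition $\depth R/I^2 > 0$ is equivalent to $\H$ having no 2-saturating set. So the task reduces to translating the notion of a 2-saturating set into a purely graph-theoretic condition.

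That translation is exactly the content of Lemma \ref{dominating}, which asserts that a subset $U$ of the vertex set of a graph $\H$ is 2-saturating if and only if $U$ is the vertex set of a dominating triangle of $\H$. Once we invoke this lemma, the equivalence ``$\H$ has no 2-saturating set $\Longleftrightarrow$ $\H$ has no dominating triangle'' is immediate, and together with Theorem \ref{positive depth} this yields the desired criterion.

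Thus the entire proof reduces to two lines: first cite Theorem \ref{positive depth} to replace $\depth R/I^2 > 0$ by the absence of 2-saturating sets, then cite Lemma \ref{dominating} to replace 2-saturating sets by dominating triangles. There is essentially no obstacle here, since all the real work (controlling $H_\mm^0(R/I^2) = \widetilde{I^2}/I^2$ via Takayama's formula, the loosely-intersecting analysis, and the classification of loosely intersecting graphs as triangles in Corollary \ref{triangle}) has already been carried out in the preceding lemmas. The only thing to be careful about is the degenerate case in Remark \ref{small} where a 2-saturating set could have fewer than three elements; but that occurs only when $I$ equals the maximal homogeneous ideal, in which case $\depth R/I^2 = 0$ trivially and one may check the statement by inspection (or simply note that in the graph setting this means $\H$ consists only of isolated vertices, so no triangle and no depth).
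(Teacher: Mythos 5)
Your proposal is correct and is essentially identical to the paper's argument: the paper proves Theorem \ref{depth > 0 graph} by exactly this two-line combination of Theorem \ref{positive depth} and Lemma \ref{dominating}. Your extra caution about small 2-saturating sets is harmless but vacuous here, since under the paper's convention a graph has only two-element edges, so $I$ is never the maximal ideal and Remark \ref{small} guarantees every 2-saturating set has at least three vertices.
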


\begin{Example}
{\rm Let us consider the following graphs which contain a triangle:

\begin{picture}(0,80)
\put(50,20){\line(2,1){40}}
\put(50,20){\line(0,1){40}}
\put(50,60){\line(2,-1){40}}
\put(50,20){\circle{2}}
\put(50,60){\circle{2}}
\put(90,40){\circle{2}}
\put(75,15){I}

\put(150,20){\line(2,1){40}}
\put(150,20){\line(0,1){40}}
\put(150,60){\line(2,-1){40}}
\put(190,40){\line(1,0){40}}
\put(150,20){\circle{2}}
\put(150,60){\circle{2}}
\put(190,40){\circle{2}}
\put(230,40){\circle{2}}
\put(190,15){II}

\put(270,20){\line(2,1){40}}
\put(270,20){\line(0,1){40}}
\put(270,60){\line(2,-1){40}}
\put(310,40){\line(1,0){40}}
\put(350,40){\line(1,0){40}}
\put(270,20){\circle{2}}
\put(270,60){\circle{2}}
\put(310,40){\circle{2}}
\put(350,40){\circle{2}}
\put(390,40){\circle{2}}
\put(325,15){III}
\end{picture}

\noindent By Theorem \ref{depth > 0 graph}, $\depth R/I^2 = 0$  for  I and II, whereas $\depth R/I^2 > 0$ for  III.}
\end{Example}

In general, we need to know which hypergraph is loosely intersecting in order 
to  find a combinatorial criterion for $\depth R/I^2 > 0$.
The following example shows that there doesn't exit a unique loosely intersecting $r$-uniform hypergraph for $r \ge 3$. 
Recall that a hypergraph is called {\em $r$-uniform} if every edge has $r$ vertices. 

\begin{Example}
{\rm The following 3-uniform hypergraphs are loosely intersecting:
\begin{align*}
& \{1,2,3\}, \{2,3,4\},\{3,4,5\},\{1,4,5\},\{1,2,5\} \text{\ or}\\
& \{1,2,3\}, \{1,3,5\}, \{1,4,5\},\{2,3,4\},\{2,4,5\}  \text{\ or}\\
& \{1,2,3\}, \{1,4,5\},\{2,3,4\},\{2,3,5\}, \{2,4,5\},\{3,4,5\}.
\end{align*}
These hypergraphs are minimal in the sense that they don't contain proper partial hypergraphs which are loosely intersecting.}
\end{Example}

\begin{Remark} 
{\rm One may raise the question whether the above method can be used to study $I^t$ for $t \ge 3$. 
The answer is yes but it is more complicated. We need to know when a monomial $x^\a \in \widetilde{I^t} \setminus I^t$. 
To check this condition we  consider the multiset $U_\a$ 
which consists of $a_i$ copies of $i$, $i = 1,...,n$. 
A multiset is said to be {\em $t$-decomposable} in $\H$ if it contains a union of $t$ edges of $\H$. 
We call $U_\a$ {\em $t$-saturating}  if  $U_\a$ is $t$-indecomposable in $\H$ and $U_\a \setminus \{a_i\ \text{copies of}\ i\}$ is $t$-decomposable in $\H(i)$ for all $i=1,...,n$. Similarly as above, we can prove that $x^\a \in \widetilde{I^t} \setminus I^t$ 
if and only if  $U_\a$ is $t$-saturating. }
\end{Remark}

\section{Associated primes}

In this section we will apply Theorem \ref{positive depth} 
to study the associated primes of the second power of the edge ideal of a hypergraph. \par

Let $\H$ be a hypergraph on a vertex set $V = [n]$. Let $C$ be a subset of $V$. 
One calls $C$ a (vertex) {\em cover} of $\H$ if $C$ meets every edge (see e.g \cite{HHT}).
Set $\H_C := \{F \cap C| \ F \in \H\}$. One calls $\H_C$ the {\em induced subhypergraph} of $\H$ on $C$. 
\par

Let $I$ be the edge ideal of $\H$. We denote by $P_C$ the ideal generated by the variables $x_i$, $i \in C$.
It is well known that every associated prime of $I^2$ has the form $P_C$ for some cover $C$ of $\H$. 
Such a cover can be characterized as follows.
  
\begin{Theorem} \label{asso}
Let $I$ be the edge ideal of a hypergraph $\H$. For a subset $C$ of the vertex set, 
$P_C$ is an associated prime of $I^2$ if and only if $\H_C$ has a 2-saturating set. 
\end{Theorem}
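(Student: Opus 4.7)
The plan is to reduce the theorem to Theorem \ref{positive depth} applied to the induced subhypergraph $\H_C$. Set $S := k[x_i \mid i \in C]$, let $\mm_C$ be the graded maximal ideal of $S$, and let $\varphi : R \to S$ be the substitution sending $x_j \mapsto 1$ for $j \notin C$ and fixing $x_i$ for $i \in C$. Since $\varphi(x^F) = x^{F \cap C}$ for every $F \in \H$, we have $\varphi(I) = I(\H_C)$, and hence $\varphi(I^2) = I(\H_C)^2$. The proof rests on the standard reduction
$$P_C \in \operatorname{Ass}(R/I^2) \ \Longleftrightarrow\  \mm_C \in \operatorname{Ass}(S/I(\H_C)^2). \qquad (\ast)$$
Once $(\ast)$ is established, the right-hand side is just the condition $\depth S/I(\H_C)^2 = 0$, and Theorem \ref{positive depth} applied to the hypergraph $\H_C$ on vertex set $C$ converts this into the existence of a 2-saturating set of $\H_C$, which is the conclusion of the theorem.

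To prove $(\ast)$, I would localize at the multiplicative set $W$ of monomials in the variables $\{x_j \mid j \notin C\}$, so that $W^{-1}R$ is naturally identified with $S[x_j^{\pm 1} \mid j \notin C]$. Inside $W^{-1}R$ each generator $x^{F_1}x^{F_2}$ of $I^2$ is a unit multiple of $x^{F_1 \cap C} x^{F_2 \cap C}$, which gives $W^{-1}I^2 = I(\H_C)^2 \cdot W^{-1}R$ together with a ring isomorphism
$$W^{-1}R / W^{-1}I^2 \;\cong\; (S/I(\H_C)^2)[x_j^{\pm 1} \mid j \notin C].$$
The associated primes of $R/I^2$ contained in $P_C$ are precisely those that survive the localization at $W$, and by the standard description of associated primes under Laurent polynomial extension these correspond bijectively to the associated primes of $S/I(\H_C)^2$. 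The prime $P_C$ matches $\mm_C$ under this correspondence, yielding $(\ast)$.

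The main subtlety is the identification $W^{-1}I^2 = I(\H_C)^2 \cdot W^{-1}R$ and the bookkeeping in the chain of associated-prime correspondences. One should also handle the degenerate case in which $C$ fails to be a cover: then some $F \in \H$ has $F \cap C = \emptyset$, so $\emptyset \in \H_C$ and $I(\H_C) = S$, making both sides of $(\ast)$ false. This matches the paper's convention that every set is decomposable in $\H_C$ as soon as $\emptyset \in \H_C$, so $\H_C$ has no 2-saturating set in this case. Every remaining step is routine once the localization isomorphism is in place.
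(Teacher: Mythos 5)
Your proposal is correct and follows essentially the same route as the paper: the paper also passes to $A = R[x_i^{-1}\mid i\notin C] = S[x_i^{\pm 1}\mid i\notin C]$, identifies $I^2A$ with $J^2A$ for $J = I(\H_C)$, uses flatness of $A$ over $S$ to match $P_C$ with the maximal ideal of $S$ among associated primes, and then invokes Theorem \ref{positive depth} for $\H_C$. Your explicit treatment of the degenerate case where $C$ is not a cover is a harmless addition that the paper leaves implicit.
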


\begin{proof}	
Let $P = P_C$ and $A = R[x_i^{-1}|\ i \not\in C]$. 
Then $P$ is an associated prime of $I^2$ if and only if $PA$ is an associated prime of $I^2A$. 
Let $S = k[x_i|\ i \in C]$ and $Q$ the ideal of $S$ generated by the variables $x_i$, $i \in C$.
Let $J$ denote the edge ideal of $\H_C$ in $S$. Then $A = S[x_i^{\pm 1}|\ i \not\in C]$, $PA = QA$, and $I^2A = J^2A$.
Since $A$ is a flat extension of $S$, $PA$ is an associated prime of $I^2A$ 
if and only if $Q$ is an associated prime of $J^2$. The latter condition means $\depth S/J^2 = 0$. 
Hence the assertion follows from Theorem \ref{positive depth} (applied to the hypergraph $\H_C$).
\end{proof}

For brevity we say that $U$ is a 2-saturating (or loosely intersecting) set of $C$ if 
$U$ is a 2-saturating set of $\H_C$ (or $\H_C|_U$ is loosely intersecting).
Note that these conditions implies that $C$ is a cover of $\H$ because otherwise $V \setminus C$ contains an edge of $\H$, hence 
$\H_C$ contains the empty set, which contradicts the indecomposability of $U$ in $\H_C$.
In general, a cover having a 2-saturating set can be characterized as follows.

\begin{Proposition} \label{minimal}
A cover $C$ of $\H$  has a 2-saturating set $U$ if and only if 
$C$ is minimal among the covers $D$ of $\H$ having $U$ as a loosely intersecting set. 
\end{Proposition}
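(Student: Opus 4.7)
The plan is to apply Lemma~\ref{saturating}, which rephrases ``$U$ is a $2$-saturating set of $\H_C$'' as the conjunction of (a) $\H_C|_U$ is loosely intersecting and (b) for every non-isolated vertex $i \in C\setminus U$ of $\H_C$, the section $\lk_{\H_C} i|_U$ contains two disjoint edges or contains an edge disjoint from some edge of $\H_C|_U$. Condition (a) is precisely the hypothesis ``$U$ is a loosely intersecting set of $C$'', so it remains to show that (b) encodes exactly the minimality of $C$ among such covers.

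The bridge between (b) and minimality is the set-theoretic identity $\H_{C\setminus i}|_U = \H_C|_U \cup \lk_{\H_C} i|_U$, valid for $i\in C\setminus U$, which one checks by splitting an edge $(F\cap C)\setminus i\subseteq U$ according to whether $i\in F\cap C$ or not. A crucial observation is that enlarging a hypergraph by additional edges on the same vertex set cannot destroy the property ``the link of every vertex contains two disjoint edges'' (any preexisting such pair survives). Hence $\H_{C\setminus i}|_U$ fails to be loosely intersecting if and only if it fails to be intersecting, which in turn is exactly condition (b) at $i$.

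Forward direction: assume $U$ is $2$-saturating in $\H_C$. Then (a) says $C$ is a cover with $U$ as a loosely intersecting set. For minimality, consider any $i\in C$. If $i\in U$ then $C\setminus i$ does not contain $U$; if $i\in C\setminus U$ is isolated in $\H_C$ then some $F\in\H$ with $F\cap C=\{i\}$ is uncovered in $C\setminus i$; otherwise $i\in C\setminus U$ is non-isolated, and (b) at $i$ combined with the identity shows $\H_{C\setminus i}|_U$ contains two disjoint edges, so $U$ is no longer loosely intersecting there.

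Backward direction: (a) holds by hypothesis. Take any non-isolated $i\in C\setminus U$ in $\H_C$. Since coverage of $\H$ is lost upon removing $i$ only when $i$ is isolated in $\H_C$, the set $C\setminus i$ is still a cover, and minimality of $C$ forces $U$ not to be loosely intersecting in $C\setminus i$. The crucial observation upgrades this to ``$\H_{C\setminus i}|_U$ is not intersecting'', which is exactly (b) at $i$. Lemma~\ref{saturating} then gives that $U$ is $2$-saturating in $\H_C$. The main subtlety is the threefold case split for deleted vertices in the forward direction together with the fact that adding edges on a fixed vertex set can spoil ``loosely intersecting'' only by spoiling ``intersecting''; once these are isolated, everything else is bookkeeping.
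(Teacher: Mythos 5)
Your overall strategy — reduce to Lemma \ref{saturating}, use the identity $\H_{C\setminus i}|_U = \H_C|_U \cup \lk_{\H_C} i|_U$, and observe that adjoining edges cannot destroy the non-intersecting-links property, so that failure of ``loosely intersecting'' after deleting $i$ must be failure of ``intersecting'' — is exactly the paper's, and your backward direction is complete. The gap is in the forward direction: you only verify that no set of the form $C \setminus i$ is a cover of $\H$ having $U$ as a loosely intersecting set, whereas the proposition asserts minimality of $C$ among \emph{all} covers $D \subset C$ with that property. The family of such covers is neither upward nor downward closed (shrinking $D$ while keeping $U \subseteq D$ enlarges $\H_D|_U$, which can destroy the intersecting property; enlarging $D$ shrinks $\H_D|_U$, which can destroy the condition on links), so ``no single vertex can be removed'' does not formally imply minimality, and your forward direction proves a weaker statement than the one claimed.

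The repair is one observation: for $U \subseteq D \subseteq E$ one has $\H_E|_U \subseteq \H_D|_U$, because $F \cap E \subseteq U$ forces $F \cap E \subseteq D$ and hence $F \cap D = F \cap E$. Now take an arbitrary cover $D \subset C$, $D \neq C$, containing $U$ with $\H_D|_U$ loosely intersecting, and pick $i \in C \setminus D$. Your three-way case split applies to this $i$: the cases $i \in U$ and $i$ isolated in $\H_C$ already exclude any such $D$ (not just $C\setminus i$), and in the remaining case the two disjoint edges you produce inside $\H_{C\setminus i}|_U$ lie in $\H_D|_U$ by the monotonicity above, since $D \subseteq C \setminus i$; this contradicts $\H_D|_U$ being intersecting. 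This is in substance what the paper does, except that it starts from an arbitrary $D$ and checks directly that the edges supplied by Lemma \ref{saturating}(b) persist as edges of $\H_D|_U$.
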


\begin{proof}	
Assume that $C$ has a 2-saturating set $U$.
Assume that $C$ is not minimal in the above sense.
Then there exists a cover $D \subset C$ of $\H$ containing $U$ such that $H_D|_U$ is loosely intersecting.
Let $i$ be a vertex in $C \setminus D$. Then $i$ is a non-isolated vertex.
By Lemma \ref{saturating}, $\lk_{\H_C}i|_U$ has two disjoint edges or an edge which doesn't intersect an edge of $H_C|_U$. 
Since $\lk_{\H_C}(i)|_U \subseteq H_D|_U$ and $\H_C|_U \subseteq \H_D|_U$,
this implies that $H_D|_U$ is non-intersecting, a contradiction.\par

Conversely, assume $C$ is minimal among the covers $D$ of $\H$ 
having $U$ as a loosely intersecting set.  
Let $i \in C \setminus U$ be an arbitrary non-isolated vertex of $\H_C$. 
Then $F \cap C  \neq \{i\}$ for every edge $F$ of $\H$.
Therefore, $C' = C \setminus \{i\}$ is a cover of $\H$.
By the minimal property of $C$, $\H_{C'}|_U$  is not loosely intersecting.
Note that $\H_{C'}|_U = H_C|_U \cup \lk_{H_C}i|_U$.
Since the link of every vertex in  $H_C|_U$ is non-intersecting, 
the link of every vertex in $H_{C'}|_U$ is also non-intersecting.
Therefore, $\H_{C'}|_U$ is not intersecting.
Since $H_C|_U$ is intersecting, this implies that $\lk_{H_C}i|_U$ has two disjoint edges or an edge 
which does not intersect an edge of $H_C|_U$.  
By Lemma \ref{saturating}, $U$ is a 2-saturating set of $\H_C$.
\end{proof}

Since a loosely intersecting set can be easily detected, 
we can use the above lemma to work out an algorithm  to find the associated primes of $I^2$. \par

Now we are going to describe properties of 2-saturating sets of a cover, 
which will be useful for the description of the associated primes of $I^2$. 
These properties depend on whether the cover is minimal or non-minimal, 
which corresponds to minimal or non-minimal (i.e. embedded) associated primes of $I^2$.

\begin{Lemma} \label{minimal cover}
Let $U$ be a subset of $V$. Then $U$ is a 2-saturating set of a minimal cover $C$ of $\H$ if and only if 
$U$ is the empty set or a vertex of $C$. 
Moreover, the minimal covers are the only covers that  have a 2-saturating set of less than two elements. 
\end{Lemma}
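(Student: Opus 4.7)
The plan is to reduce everything to Remark \ref{small} applied to the induced subhypergraph $\H_C$. The preliminary observation is that a cover $C$ of $\H$ is minimal if and only if every vertex $i \in C$ is isolated in $\H_C$, i.e.\ $\{i\} \in \H_C$. For the forward direction, minimality of $C$ means $C \setminus i$ fails to cover $\H$, so some edge $F \in \H$ satisfies $F \cap C \subseteq \{i\}$; but $F \cap C \neq \emptyset$ since $C$ is a cover, forcing $F \cap C = \{i\}$ and hence $\{i\} \in \H_C$. Conversely, if every $\{i\} \in \H_C$, then $C \setminus i$ fails to meet the witnessing edge, so $C$ is minimal.

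Reading this through the polynomial ring $S = k[x_i \mid i \in C]$, the condition that every vertex of $\H_C$ is isolated is precisely the statement that $I(\H_C)$ coincides with the maximal homogeneous ideal of $S$. Applying Remark \ref{small} to $\H_C$ then yields the moreover assertion: $\H_C$ admits a 2-saturating set of fewer than two elements if and only if $I(\H_C)$ is the maximal homogeneous ideal of $S$, which by the first step happens exactly when $C$ is a minimal cover.

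To obtain the complete list of 2-saturating sets of a minimal cover $C$, I would argue as follows. If $|U| \geq 2$, pick any $j \in U$ and partition $U$ as $\{j\} \sqcup (U \setminus j)$; since both $\{j\}$ and any singleton $\{k\}$ with $k \in U \setminus j$ belong to $\H_C$ by minimality, this partition witnesses that $U$ is decomposable in $\H_C$, so $U$ is not 2-saturating. The remaining candidates are $U = \emptyset$ and $U = \{j\}$ for $j \in C$, and a direct verification shows both are 2-saturating: $U$ is indecomposable because $\emptyset \notin \H_C$ (as $C$ covers $\H$), and for each $i \in C$ the set $U \setminus i \subseteq \{j\}$ is decomposable in $\H_C(i)$ since the isolated-vertex condition gives $\emptyset = \{i\}\setminus i \in \H_C(i)$, which trivializes the decomposability requirement.

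The main subtlety is the bookkeeping around the two degenerate conventions built into the definition of decomposable: the empty set is decomposable in a hypergraph only when $\emptyset$ is an edge, and a singleton is decomposable only when its link contains $\emptyset$. Minimality of $C$ is precisely the structural hypothesis that produces such empty edges in every link $\H_C(i)$, so once this bookkeeping is in place the arguments in the previous paragraphs are immediate.
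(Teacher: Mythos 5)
Your proof is correct and takes essentially the same route as the paper's: both reduce the lemma to the observation that $C$ is a minimal cover exactly when every vertex of $C$ is isolated in $\H_C$ (equivalently, the edge ideal of $\H_C$ is the maximal ideal), and then invoke Remark \ref{small}. You merely spell out the direct combinatorial verification of which small sets are 2-saturating and the converse of the isolated-vertex characterization, details the paper's one-line proof leaves implicit.
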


\begin{proof}  
For every vertex $i \in C$, there exists an edge $F$ such that $F \cap C = \{i\}$ because $C \setminus i$ is not a cover of $\H$. 
From this it follows that every vertex of $C$ is isolated in $\H_C$. Hence the assertions follows from 
Remark \ref{small}. 
\end{proof}

\begin{Lemma} \label{non-minimal}
Let $U$ be a 2-saturating set of a non-minimal cover $C$ of $\H$. 
Then $\H|_U$ has a special triangle $F_1,F_2,F_3$ such that 
$(F_1 \cap F_2 \cap F_3) \cap C = \emptyset$ and $(F_1 \cup F_2 \cup F_3)\cap C$ is indecomposable in $\H_C$.
\end{Lemma}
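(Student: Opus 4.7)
The plan is to apply the structural results already at hand to $\H_C|_U$, then lift the resulting special triangle back to $\H$ via the defining formula $\H_C = \{F \cap C \mid F \in \H\}$.

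First I would note, using Lemma \ref{saturating}(a), that $\H_C|_U$ is loosely intersecting, and combine this with Lemma \ref{minimal cover} and the non-minimality of $C$ to conclude that $|U| \ge 2$, so $\H_C|_U$ is a loosely intersecting hypergraph on more than one vertex. Lemma \ref{special} then furnishes a special triangle $E_1, E_2, E_3$ in $\H_C|_U$ with $E_1 \cap E_2 \cap E_3 = \emptyset$, together with witness vertices $v_1 \in E_1 \cap E_2 \setminus E_3$, $v_2 \in E_2 \cap E_3 \setminus E_1$, $v_3 \in E_1 \cap E_3 \setminus E_2$ all lying in $U \subseteq C$.

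Next I would pick $F_k \in \H$ with $F_k \cap C = E_k$; such $F_k$ exist by the definition of $\H_C$. I claim that $F_1, F_2, F_3$ provide the required special triangle. Indeed, since $v_k \in E_k \subseteq F_k$ and $v_k \in C$, the relations $v_k \notin E_{k'} = F_{k'} \cap C$ translate to $v_k \notin F_{k'}$; thus the special-triangle witnesses are preserved under the lift. Intersecting with $C$ then gives $(F_1 \cap F_2 \cap F_3) \cap C = E_1 \cap E_2 \cap E_3 = \emptyset$, and $(F_1 \cup F_2 \cup F_3) \cap C = E_1 \cup E_2 \cup E_3 \subseteq U$.

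The remaining task is to show $T := E_1 \cup E_2 \cup E_3 \subseteq U$ is indecomposable in $\H_C$. For this I would use the fact that $U$ itself is indecomposable in $\H_C$ (a direct consequence of the 2-saturating hypothesis) together with the elementary observation that indecomposability passes to subsets: any partition $T = T_1 \sqcup T_2$ with each $T_i$ containing an edge of $\H_C$ would extend to $U = T_1 \sqcup (T_2 \cup (U \setminus T))$, a decomposition of $U$, contradicting the 2-saturating property. The main obstacle is really just this subset-preservation argument for indecomposability, which is a one-line observation but easy to overlook; everything else is careful translation of membership relations through the definition of $\H_C$.
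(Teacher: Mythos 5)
Your proposal is correct and follows essentially the same route as the paper's proof: establish $|U|\ge 2$ from the non-minimality of $C$, apply Lemma \ref{special} to the loosely intersecting hypergraph $\H_C|_U$ to obtain a special triangle with empty intersection, lift its edges to $\H$ via $E_k = F_k \cap C$, and note that indecomposability of $U$ in $\H_C$ passes down to the subset $(F_1\cup F_2\cup F_3)\cap C$. Your version is in fact slightly more careful than the paper's, since you explicitly verify that the witness vertices survive the lift and spell out the subset-preservation argument that the paper states in one line.
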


\begin{proof}	 
Since $C$ is not a minimal cover, $\H_C$ has at least a non-isolated vertex $i$. 
Hence $\H_C(i)$ doesn't contain the empty set.
Since $U$ is decomposable in $\H_C(i)$, $U$ contains at least two different edges of $\H_C(i)$. 
As a consequence, $U$ has at least two vertices. 
By Lemma \ref{special}, $\H_C|U$ contains a special triangle $F_1',F_2',F_3'$ with $F_1' \cap F_2' \cap F_3' = \emptyset$.
Let $F_i \in \H$ such that $F_i' = F_i \cap C$. 
Then $F_1,F_2,F_3$ is a special triangle of $H$ with $F_ 1 \cap F_2 \cap F_3 \cap C = \emptyset$. 
Since $U$ is indecomposable in $\H_C$ and since $(F_1 \cup F_2 \cup F_3) \cap C \subseteq U$, $(F_1 \cup F_2 \cup F_3) \cap C$ is indecomposable in $\H_C$. 
\end{proof}

Let $I^{(2)}$ denote the second symbolic power of $I$, 
that is the intersection of the primary components of the minimal associated primes of $I^2$. 
Then $I^{(2)} = I^2$ if and only if $I^2$ has no non-minimal associated primes.
Since these primes correspond to non-minimal covers of $\H$ having 2-saturating sets,  
we can use Lemma \ref{non-minimal} to deduce a criterion for $I^{(2)} = I^2$.
 
\begin{Theorem} \label{symbolic}
Let $I$ be the edge ideal of a hypergraph $\H$. Then $I^{(2)} = I^2$ if and only if $\H$ doesn't contain any
special triangle $F_1,F_2,F_3$ such that  $(F_1 \cup F_2 \cup F_3) \cap D$ is indecomposable in $\H_D$
for $D = V \setminus (F_1 \cap F_2 \cap F_3)$.
\end{Theorem}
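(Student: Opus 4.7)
Set $T := F_1 \cap F_2 \cap F_3$ (so $D = V \setminus T$) and $U_C := (F_1 \cup F_2 \cup F_3) \cap C$ for any $C \subseteq V$. My strategy is to run Lemma \ref{non-minimal} forwards and backwards, using a minimality construction for the ``if'' direction. For the forward implication, suppose $I^{(2)} \ne I^2$; then $I^2$ has an embedded associated prime $P_C$, and Theorem \ref{asso} together with the non-minimality of $C$ gives a 2-saturating set of $\H_C$. Lemma \ref{non-minimal} then produces a special triangle $F_1,F_2,F_3\in\H$ with $T\cap C=\emptyset$ (so $C\subseteq D$) and $U_C$ indecomposable in $\H_C$. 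I propagate this indecomposability to $U_D$: if $U_D$ were decomposable in $\H_D$, then either $\emptyset\in\H_D$ (some $F\in\H$ lies inside $T$, forcing $F\cap C=\emptyset\in\H_C$ and making $U_C$ decomposable), or $U_D$ contains two disjoint edges $E_1\cap D, E_2\cap D$ whose restrictions $E_i\cap C\subseteq U_C$ remain disjoint; whether both $E_i\cap C$ are nonempty (a decomposition of $U_C$) or one is empty (so $\emptyset\in\H_C$), we contradict the indecomposability of $U_C$ in $\H_C$.

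For the reverse direction I reverse this argument via minimality. First, $D$ is a cover: otherwise some $F\in\H$ satisfies $F\subseteq T$, which puts $\emptyset\in\H_D$ and makes $U_D$ decomposable. Among the family of covers $C'\subseteq D$ for which $U_{C'}$ is indecomposable in $\H_{C'}$ (nonempty as $D$ itself qualifies), I pick a minimal $C$; the claim is that $U_C$ is a 2-saturating set of $\H_C$. Indecomposability is by construction. For each $i\in C$, minimality of $C$ forces $C\setminus i$ to violate one of the defining properties: if $C\setminus i$ is not a cover, then some $F\in\H$ has $F\cap C=\{i\}$, so $\{i\}\in\H_C$ and $\emptyset\in\H_C(i)$, and $U_C\setminus i$ is automatically decomposable in $\H_C(i)$. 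Otherwise $C\setminus i$ is a cover but $U_{C\setminus i}$ is decomposable in $\H_{C\setminus i}$; using the identifications $\H_{C\setminus i}=\H_C(i)$ (valid because $i\in C$) and $U_{C\setminus i}=U_C\setminus i$ (which holds whether or not $i\in U_C$), this yields exactly the decomposability of $U_C\setminus i$ in $\H_C(i)$ demanded by Lemma \ref{saturating}.

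Finally, $|U_C|\ge 2$: since $C$ is a cover, each $F_i$ meets $C$, so $U_C\ne\emptyset$; and if $U_C=\{v\}$ then $C\cap F_i\subseteq\{v\}$ for $i=1,2,3$ forces $v\in F_1\cap F_2\cap F_3=T$, contradicting $C\subseteq D$. By Lemma \ref{minimal cover}, $C$ is therefore not a minimal cover, and Theorem \ref{asso} then gives $P_C$ as an embedded associated prime of $I^2$, so $I^{(2)}\ne I^2$. The main obstacle is the reverse direction — specifically, verifying that each failure of the minimality of $C$ at a vertex $i\in C$ yields the precise decomposability of $U_C\setminus i$ in $\H_C(i)$, which requires careful bookkeeping to identify $\H_{C\setminus i}$ with $\H_C(i)$ and to handle the two subcases $i\in U_C$ and $i\notin U_C$ uniformly.
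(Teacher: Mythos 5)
Your proof is correct. The forward direction is essentially the paper's argument: Lemma \ref{non-minimal} produces the special triangle, and indecomposability is transported from $\H_C$ to $\H_D$ by restricting any putative decomposition of $(F_1\cup F_2\cup F_3)\cap D$ back to $C\subseteq D$ (your case split on whether an edge meets $C$ or shrinks to the empty set is the right way to make the paper's one-line assertion precise). In the reverse direction you diverge mildly but usefully. The paper fixes no triangle: it takes $C$ minimal among all sets whose induced hypergraph contains \emph{some} special triangle with empty intersection and indecomposable union, and in the descent step it must re-verify that the triangle structure survives the deletion of a vertex (the $G_j\setminus i$ still pairwise intersect and still have empty total intersection). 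You instead fix the given triangle $F_1,F_2,F_3$ once and for all and minimize only over covers $C'\subseteq D$ with $(F_1\cup F_2\cup F_3)\cap C'$ indecomposable in $\H_{C'}$; the descent then reduces to the identifications $\H_{C\setminus i}=\H_C(i)$ and $U_{C\setminus i}=U_C\setminus i$, with the non-cover case handled by $\emptyset\in\H_C(i)$. The price is that you no longer know $U_C$ contains a triangle, so you need a separate reason why $C$ is a non-minimal cover; your observation that $|U_C|\ge 2$ because $C$ is a cover disjoint from $F_1\cap F_2\cap F_3$ (so no single vertex of $C$ can meet all three $F_i$) supplies exactly this via Lemma \ref{minimal cover}. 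Both routes are sound; yours trades the paper's bookkeeping of the triangle through the minimization for a one-line cardinality argument at the end.
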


\begin{proof}	
Assume that $I^{(2)} \neq I^2$. Then there is a non-minimal cover $C$ of $\H$ having a 2-saturating set $U$.
By Lemma \ref{non-minimal}, $\H$ has a special triangle $F_1,F_2,F_3$ such that 
$F_ 1 \cap F_2 \cap F_3 \cap C = \emptyset$ and  $(F_1 \cup F_2 \cup F_3) \cap C$ is indecomposable in $\H_C$.
Let $D = V \setminus (F_ 1 \cap F_2 \cap F_3)$. Then $C \subseteq D$. 
Since every edge of $\H_C$ is the intersection of an edge of $\H_D$ with $C$, 
$(F_1 \cup F_2 \cup F_3) \cap D$ is also indecomposable in $\H_D$. \par

Conversely, assume that $\H$ has a special triangle $F_1,F_2,F_3$ such that 
$(F_1 \cup F_2 \cup F_3) \cap D$ is indecomposable in $\H_D$.
Let $F_i' = F_i \cap D$. Then $F_1',F_2',F_3'$ is a special triangle in $\H_D$ with empty intersection. 
Thus, there exists a smallest set $C$ such that $\H_C$ contains a special triangle $G_1,G_2,G_3$ with empty intersection
and $U := G_1 \cup G_2 \cup G_3$ is indecomposable in $\H_C$. 
We will show that $U$ is decomposable in $\H_C(i)$ for all $i \in C$.
For that we may assume that $\H_C(i)$ does not contain the empty set, i.e. 
$i$ is not an isolated vertex of $\H_C$. Then $F \cap C \neq \{i\}$ for all $F \in \H$.
Put $C' := C \setminus i$. Then $\H_C(i) = \H_{C'}$.
If $U$ is indecomposable in $\H_{C'}$,
then $G_j \cap G_h \neq \{i\}$ for $j, h = 1,2,3$. Put $G_j' = G_j \setminus i$. 
Then $G_j' \cap G_h' \neq \emptyset$.  Hence $G_1',G_2',G_3'$ is a special triangle in $\H_{C'}$ with empty intersection.
Since $U$ is indecomposable in $\H_{C'}$, $G_1' \cup G_2' \cup G_3' = U \setminus i$ is indecomposable in $\H_{C'}$.
So we obtain with $C'$ a contradiction to the minimal property of $C$.
This shows  $U$ is a 2-saturating set of $C$. Since $|U| \ge 3$,  $C$ is not a minimal cover of $\H$ by Remark \ref{minimal cover}. 
Therefore, $P_C$ is not a minimal prime over $I^2$. This implies $I^{(2)} \neq I^2$.
\end{proof}

Theorem \ref{symbolic} can be also deduced from the following result of Rinaldo, Terai and Yoshida, 
which was proved by a different method.

\begin{Corollary}\label{symbolic} \cite[Theorem 2.1]{RTY2}
$I^{(2)} = I^2$ if and only if $\H$ has no special triangle $F_1,F_2,F_3$ such that
$x^{F_1 \cup F_2 \cup F_3}x^{F_1 \cap F_2 \cap F_3} \not\in I^2$.
\end{Corollary}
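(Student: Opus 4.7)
The plan is to deduce this corollary from the preceding theorem (Theorem \ref{symbolic} above) by showing, for every special triangle $F_1,F_2,F_3$ of $\H$, that the two conditions attached to it match. Set $W := F_1 \cap F_2 \cap F_3$, $D := V \setminus W$, and $U := (F_1 \cup F_2 \cup F_3) \cap D$. The key intermediate step I would establish is the equivalence
\[
x^{F_1 \cup F_2 \cup F_3}\, x^{F_1 \cap F_2 \cap F_3} \in I^2 \iff U \text{ is decomposable in } \H_D,
\]
after which the corollary follows by taking contrapositives and invoking the preceding theorem.

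To prove this equivalence, let $\a$ denote the exponent vector of the monomial on the left, so that $a_i = 2$ for $i \in W$, $a_i = 1$ for $i \in U$, and $a_i = 0$ otherwise. The membership $x^\a \in I^2$ translates into the existence of edges $G,G' \in \H$ (possibly equal) with $G + G' \le \a$ componentwise. For the forward direction, such $G,G'$ are forced to lie in $F_1 \cup F_2 \cup F_3$ (since $a_i = 0$ outside this set) and to be disjoint on $D$ (since $a_i \le 1$ there); hence $G \cap D$ and $G' \cap D$ are disjoint edges of $\H_D$ contained in $U$. If both are nonempty, this exhibits $U$ as decomposable in $\H_D$; if one is empty, then some edge of $\H$ lies inside $W$, so $\emptyset \in \H_D$ and decomposability of $U$ in $\H_D$ holds automatically by the convention recalled in Section 2.

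For the converse direction, if $\emptyset \in \H_D$ then an edge $G \in \H$ is contained in $W$, and $2G \le \a$ shows $x^\a = x^G\cdot x^G \cdot x^{\a-2G} \in I^2$ directly. Otherwise $U$ contains two disjoint nonempty edges $G \cap D$, $G' \cap D$ of $\H_D$; then $G,G' \subseteq U \cup W = F_1 \cup F_2 \cup F_3$ and $G \cap G' \subseteq W$, so a componentwise check using $a_i = 2$ on $W$ and $a_i = 1$ on $U$ yields $G + G' \le \a$, hence $x^\a \in I^2$.

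The only real delicacy in this argument is the bookkeeping around the empty-edge convention for $\H_D$: one has to separately treat the case in which some edge of $\H$ is already contained in the triple intersection $W$, since such an edge by itself witnesses $x^\a \in I^2$ while simultaneously placing $\emptyset$ into $\H_D$. Past that case split, the equivalence reduces to a direct multidegree comparison, and no new combinatorial input beyond Theorem \ref{symbolic} is required.
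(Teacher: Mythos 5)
Your proposal is correct and follows essentially the same route as the paper: both deduce the corollary from the preceding theorem by showing, for each fixed special triangle, that $x^{F_1\cup F_2\cup F_3}x^{F_1\cap F_2\cap F_3}\in I^2$ is equivalent to decomposability of $(F_1\cup F_2\cup F_3)\cap D$ in $\H_D$, via the observation that membership in $I^2$ amounts to finding two edges inside $F_1\cup F_2\cup F_3$ meeting only within $F_1\cap F_2\cap F_3$. Your explicit componentwise bookkeeping and the separate treatment of an edge contained in the triple intersection (the $\emptyset\in\H_D$ case) are handled only implicitly in the paper's argument, so if anything your write-up is slightly more careful.
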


\begin{proof}	
Assume that  $\H$ has a special triangle $F_1,F_2,F_3$ such that 
$(F_1 \cup F_2 \cup F_3)\cap D$ is indecomposable in $\H_D$ for $D = V \setminus (F_1 \cap F_2 \cap F_3)$.
Then every pair of edges of $\H_D$ in $F_1 \cup F_2 \cup F_3$ intersect.
Therefore, every pair of edges of $\H$ in $F_1 \cup F_2 \cup F_3$ must have at least a common vertex in $D$. 
This is equivalent to the condition $x^{F_1 \cup F_2 \cup F_3}x^{F_1 \cap F_2 \cap F_3} \not\in I^2$.

Conversely, if $\H$ has a special triangle $F_1,F_2,F_3$ such that
$x^{F_1 \cup F_2 \cup F_3}x^{F_1 \cap F_2 \cap F_3} \not\in I^2$,
then there doesn't exist any pair of edges in $F_1 \cup F_2 \cup F_3$ whose intersection is contained in $F_1 \cap F_2 \cap F_3$.
Therefore, every pair of edges in $F_1 \cup F_2 \cup F_3$ have at least a common vertex in $D$.
Hence $(F_1 \cup F_2 \cup F_3) \cap D$ is indecomposable in $\H_D$. 
\end{proof}

The above results become simpler in the graph case. 
For a set $U \subseteq V$ we denote by $N(U)$ the set of all vertices which are adjacent to vertices of $U$. 
One calls $N(U)$ the {\em neighborhood} of $U$. We say that $U$ is a triangle of a graph if $U$ is the vertex set of a triangle.

\begin{Lemma} \label{neighbor}
Let $\H$ be a graph. A set $U \subseteq V$ is a 2-saturating set of a non-minimal cover $C$ of $\H$ 
if and only if $U$ is  a triangle and $C$ is minimal among the covers of $\H$ containing $N(U)$.
\end{Lemma}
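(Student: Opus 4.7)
The plan is to derive this from Proposition~\ref{minimal} by first showing that, for a triangle $U$ of the graph $\H$, the trace $\H_D|_U$ is loosely intersecting precisely when $N(U)\subseteq D$. With this equivalence, the covers $D$ having $U$ as a loosely intersecting set are exactly the covers containing $N(U)$, so Proposition~\ref{minimal} translates ``$U$ is a $2$-saturating set of $C$'' into ``$U$ is a triangle and $C$ is minimal among covers containing $N(U)$'', and the non-minimality of $C$ as a cover becomes an easy byproduct. The main technical obstacle is controlling the trace $\H_D|_U$: although $\H$ is a graph, $\H_D$ may contain singleton edges---one for every edge of $\H$ joining $U$ to $V\setminus D$---and these singletons turn out to be the exact obstruction to $\H_D|_U$ being loosely intersecting.

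For the forward implication, assume $U$ is $2$-saturating for a non-minimal cover $C$. Since $\H$ is a graph, every special triangle of $\H$ is an ordinary triangle, so Lemma~\ref{non-minimal} produces a triangle $T$ of $\H$ with vertex set $U'\subseteq U$. In the loosely intersecting trace $\H_C|_U$ no extra edge can appear: a singleton $\{v\}$ would have to meet all three sides of $T$, forcing $v$ into their empty common intersection, and any further $2$-edge meeting each side of $T$ must coincide with one of them. Hence $\H_C|_U$ consists exactly of the three sides of $T$, and since every vertex of a loosely intersecting hypergraph lies in at least two edges, $U=U'$ is a triangle of $\H$. The absence of singletons then forces $N(U)\subseteq C$: a vertex $w\in N(U)\setminus C$ with neighbor $u\in U$ would produce the forbidden singleton $\{u\}\in\H_C|_U$. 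For the minimality of $C$ among covers containing $N(U)$: removing $i\in N(U)$ loses the containment, while for $i\in C\setminus N(U)$, condition~(b) of Lemma~\ref{saturating} forces $i$ to be isolated in $\H_C$ (otherwise $i$ would need a neighbor in $U$), so $i$ has a neighbor outside $C$ and $C\setminus\{i\}$ fails to cover $\H$.

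For the reverse implication, assume $U$ is a triangle of $\H$ and $C$ is minimal among covers containing $N(U)$. For any cover $D\supseteq U$, the edges of $\H$ contained in $U$ are exactly the three sides of the triangle (since $|U|=3$ and $\H$ is a graph), so $\H_D|_U$ consists of that triangle together with singletons $\{u\}$ for those $u\in U$ having a neighbor outside $D$. Hence $\H_D|_U$ is loosely intersecting iff $N(U)\subseteq D$---one direction by Corollary~\ref{triangle}, the other by the singleton obstruction just described. Proposition~\ref{minimal} then gives that $U$ is $2$-saturating for $C$. Finally, $C$ is not a minimal cover, because every $v\in U$ has all its neighbors inside $N(U)\subseteq C$, so $C\setminus\{v\}$ still covers $\H$.
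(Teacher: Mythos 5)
Your proof is correct and follows essentially the same route as the paper: identify $U$ as a triangle via Lemma \ref{non-minimal}, establish that for a triangle $U$ a cover $D$ has $U$ as a loosely intersecting set exactly when $N(U) \subseteq D$, and invoke Proposition \ref{minimal}. The only differences are cosmetic: you rule out singleton edges of $\H_C|_U$ by a direct intersection argument where the paper passes to the set of non-isolated vertices, and you verify the forward minimality by hand via Lemma \ref{saturating}(b) instead of citing Proposition \ref{minimal} for that direction as well.
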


\begin{proof}	
Since $C$ is not minimal, $\H_C$ has at least a non-isolated vertex. Let $C'$ denote the set of non-isolated vertices of $\H_C$.
Then $\H_{C'}$ is a graph. If $U$ is a 2-saturating set of $C$, then $U$ contains a triangle by Lemma \ref{non-minimal}.
Hence $U$ does not contain any isolated vertex of $\H_C$. This implies $U \subseteq C'$. It is clear that 
$U$ is a  loosely intersecting set in $C'$. By Lemma \ref{triangle}, $U$ is  a triangle of $\H$. \par 
To prove the assertion we may now assume that $U$ is a triangle.
For any set $D \supseteq U$, $U$ is  loosely intersecting in $D$ if and only if  $N(U) \subseteq D$.   
By Proposition \ref{minimal}, this implies that $U$ is a 2-saturating set of $C$ 
if and only if $C$ is minimal among the covers $D$ containing $N(U)$.  
 \end{proof}

\begin{Theorem} \label{graph asso}
Let $I$ be the edge ideal of a graph $\H$. For a cover $C$ of $\H$, $P_C$ is an associated prime of $I^2$ if and only if  
$C$ is a minimal cover or $C$ is minimal among the covers containing the neighborhood of a triangle.
\end{Theorem}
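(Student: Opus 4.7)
The plan is to assemble the three previously established facts: Theorem \ref{asso} (which reduces associated primes of $I^2$ to covers whose induced subhypergraph has a 2-saturating set), Lemma \ref{minimal cover} (which describes 2-saturating sets of minimal covers), and Lemma \ref{neighbor} (which describes 2-saturating sets of non-minimal covers in the graph case). Each direction of the equivalence is then immediate.

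For the ``if'' direction, I split into two cases. Suppose first that $C$ is a minimal cover of $\H$. Then every vertex of $C$ is isolated in $\H_C$, so by Lemma \ref{minimal cover} the empty set (or any single vertex of $C$) is a 2-saturating set of $\H_C$, and Theorem \ref{asso} gives that $P_C$ is associated to $I^2$. Suppose instead that $C$ is minimal among the covers of $\H$ containing $N(U)$ for some triangle $U$ of $\H$. Then Lemma \ref{neighbor} says $U$ is a 2-saturating set of $\H_C$, and again Theorem \ref{asso} finishes this case.

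For the ``only if'' direction, assume $P_C$ is an associated prime of $I^2$. By Theorem \ref{asso}, $\H_C$ has a 2-saturating set $U$. If $C$ is already a minimal cover of $\H$, we are done. Otherwise $C$ is non-minimal, and Lemma \ref{neighbor} applies: $U$ is a triangle of $\H$ and $C$ is minimal among the covers containing $N(U)$, which is exactly the second alternative in the statement.

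There is no real obstacle here; the theorem is essentially a repackaging of Lemmas \ref{minimal cover} and \ref{neighbor} via Theorem \ref{asso}. The only point worth double-checking is that the two cases in the ``if'' direction are not redundant in a problematic way and together exhaust precisely the covers whose induced subhypergraph carries a 2-saturating set, which is guaranteed by the dichotomy (minimal cover versus non-minimal cover) underlying Lemmas \ref{minimal cover} and \ref{neighbor}.
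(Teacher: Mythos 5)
Your proposal is correct and follows essentially the same route as the paper: both reduce the theorem to Theorem \ref{asso} combined with the minimal/non-minimal cover dichotomy of Lemmas \ref{minimal cover} and \ref{neighbor}. The only cosmetic difference is that the paper dispatches the minimal-cover case by noting that minimal primes of $I^2$ correspond to minimal covers, whereas you route it through Lemma \ref{minimal cover} (the empty set is a 2-saturating set of a minimal cover) and Theorem \ref{asso}; both are valid.
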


\begin{proof}	 
Since $P_C$ is a minimal prime of $I^2$ if and only if $C$ is a minimal cover, we only need to prove that
$P_C$ is an embedded associated prime of $I^2$ if and only if  
$C$ is minimal among the covers containing the neighborhood of a triangle.
Therefore, the assertion follows from Theorem \ref{asso} and Lemma \ref{neighbor}.
 \end{proof}
  
By the above theorem, every embedded associated prime of $I^2$ originates from a triangle of the graph. 
As an immediate consequence, $I^{(2)} = I^2$ if and only if $\H$ has no triangle.
This result is only a special case of a more general result which says that
$I^{(t)} = I^t$ if and only if $\H$ has no odd cycle of length $\le 2t-1$ \cite[Lemma 3.10]{RTY1}. \par

The following example shows that $I^2$ may have different embedded associated primes which originate from the same triangle. 

\begin{Example}
{\rm Let us consider the following graphs which contain only a triangle:

\begin{picture}(0,80)
\put(50,20){\line(2,1){40}}
\put(50,20){\line(0,1){40}}
\put(50,60){\line(2,-1){40}}
\put(90,40){\line(1,0){40}}
\put(130,40){\line(1,0){40}}
\put(50,20){\circle{2}}
\put(50,60){\circle{2}}
\put(90,40){\circle{2}}
\put(130,40){\circle{2}}
\put(170,40){\circle{2}}
\put(110,15){I}
\put(41,58){1}
\put(41,17){2}
\put(88,43){3}
\put(127,43){4}
\put(167,43){5}

\put(230,20){\line(2,1){40}}
\put(230,20){\line(0,1){40}}
\put(230,60){\line(2,-1){40}}
\put(270,40){\line(1,0){40}}
\put(310,40){\line(1,0){40}}
\put(350,40){\line(1,0){40}}
\put(230,20){\circle{2}}
\put(230,60){\circle{2}}
\put(270,40){\circle{2}}
\put(310,40){\circle{2}}
\put(350,40){\circle{2}}
\put(390,40){\circle{2}}
\put(310,15){II}
\put(221,58){1}
\put(221,17){2}
\put(268,43){3}
\put(307,43){4}
\put(347,43){5}
\put(387,43){6}
\end{picture}

\noindent Applying Theorem \ref{graph asso} we can see that for I, 
$I^2$  has only an embedded associated prime $(x_1,x_2,x_3,x_4)$,  
and for II,  $I^2$ has two embedded associated primes 
$(x_1,x_2,x_3,x_4,x_5)$ and $(x_1,x_2,x_3,x_4,x_6)$.}
\end{Example}

\section{Depth greater than one}

Let $\H$ be a hypergraph on a vertex set $V = [n]$. Let $I$ be the edge ideal of $\H$. 
In this section we want to study when $\depth R/I^2 > 1$.
It is known that  this is equivalent to the condition $H_\mm^i(R/I^2) = 0$ for $i = 0,1$. \par

In Section 2 we have given a combinatorial criterion for $\depth R/I^2 > 0$ or, equivalently, for $H_\mm^0(R/I^2) = 0$.
It remains to find a combinatorial criterion for $H_\mm^1(R/I^2) = 0$. By Proposition \ref{h1} we need to know when 
$\D_\a(I^2)$ is connected for all $\a \in \NN^n$.
\par

Set $H_\a = \{i|\ a_i > 0\}$. For $F \subseteq V$ we denote by $\overline F$ the complement of $F$ in $V$.
 
\begin{Lemma} \label{facet 2}
$\F(\D_\a(I^2)) = \{F|\  H_\a \cap \overline F \text{ is a 2-saturating set of } \overline F\}.$ 
\end{Lemma}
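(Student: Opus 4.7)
The plan is to combine the general facet description from Lemma \ref{facet} with the saturation criterion from Lemma \ref{saturation}, applied to the contraction $I_F$. Since $\a \in \NN^n$ we have $G_\a = \emptyset$, so Lemma \ref{facet} specializes to
$$\F(\D_\a(I^2)) = \{F \subseteq [n] \mid x^{\a_F} \in \widetilde{(I^2)_F} \setminus (I^2)_F\}.$$
The task is then to rewrite the right-hand condition in hypergraph language.

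First I would record two routine identifications. The substitution $x_i \mapsto 1$ for $i \in F$ is a ring homomorphism, so $(I^2)_F = (I_F)^2$; and this same substitution sends each generator $x^E$ of $I$ to $x^{E \cap \overline F}$, hence $I_F$ is precisely the edge ideal in $k[x_i \mid i \in \overline F]$ of the induced subhypergraph $\H_{\overline F}$ on the vertex set $\overline F$. Moreover, $\a_F$ is supported on $\overline F$, and its support equals $H_\a \cap \overline F$. Applying Lemma \ref{saturation} to $\H_{\overline F}$ in place of $\H$ and to $\a_F$ in place of $\a$ then gives
$$x^{\a_F} \in \widetilde{(I_F)^2} \setminus (I_F)^2 \iff H_\a \cap \overline F \text{ is a 2-saturating set of } \H_{\overline F},$$
which in the terminology of Section 3 is precisely a 2-saturating set of $\overline F$. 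Combined with the first display, this is the asserted equality.

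The only point requiring attention rather than a real obstacle is that Lemma \ref{saturation} also delivers the side condition $(\a_F)_i \in \{0,1\}$ for $i \in \overline F$. This is automatic for $\a \in \{0,1\}^n$, and, since $\rho_j(I^2) = 2$ for every $j$, that is the only range in which $\D_\a(I^2)$ contributes to Takayama's formula in the subsequent use of the lemma; one therefore reads the statement with this tacit restriction. The small bookkeeping in the main step is the identification $(I^2)_F = (I_F)^2$ together with the recognition that $I_F$ really is the edge ideal of the induced subhypergraph on $\overline F$, which is what makes Lemma \ref{saturation} directly applicable.
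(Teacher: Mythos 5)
Your proof is correct and follows essentially the same route as the paper's: specialize Lemma \ref{facet} to $G_\a = \emptyset$, identify $(I^2)_F = (I_F)^2$ with the square of the edge ideal of $\H_{\overline F}$, and invoke Lemma \ref{saturation}. Your explicit handling of the side condition $\a_F \in \{0,1\}^{\overline F}$ is in fact slightly more careful than the paper, which tacitly assumes $\a \in \{0,1\}^n$ at this point.
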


\begin{proof} 
By Lemma \ref{facet} we have
$\F(\D_\a(I^2))  := \{F\mid  x^{\a_F} \in \widetilde {I^2_F} \setminus I^2_F\},$
where $I_F = IR_F \cap k[x_i|\ i \in \overline F]$. 
It is easy to check that $I_F$ is the edge ideal of the hypergraph $\H_{\overline F}$ and $H_{\a_F} = H_\a \cap \overline F$.
By Lemma \ref{saturation}, $x^{\a_F} \in \widetilde {I^2_F} \setminus I^2_F$ if and only if 
$H_\a \cap \overline F$ is a 2-saturating set of $\overline F$.        
\end{proof}

Let $\D$ denote the simplicial complex $\D(I^2)$.
Since $\D = \D(I)$, $\D$ is the complex of the independent sets of $\H$, 
where  a subset $F \subseteq V$ is called {\it independent} if $F$ doesn't contain any edge of $\H$.
Obviously, $F$ is independent if and only if $\overline F$ is a cover of $\H$. 
For every vertex $i$ set $\st_\D i = \{F \in \D|\ i \in F\} \cup\{\emptyset\}$. One calls $\st_\D i$ the {\em star} of $i$ in $\D$.

\begin{Corollary}\label{star}
Let $\a = \e_i + \e_j$, $i \neq j$, where $\e_i$ and $\e_j$ denote the $i$-th and $j$-th unit vectors. 
Then  $\D_\a(I^2) = \st_\D i \cup \st_\D j.$  
\end{Corollary}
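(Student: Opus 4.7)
The plan is to match the facets of both sides of the equality; since a simplicial complex is determined by its facets, this establishes the result. By Lemma \ref{facet 2}, $F \in \F(\D_\a(I^2))$ if and only if $H_\a \cap \overline F = \{i,j\} \cap \overline F$ is a 2-saturating set of $\H_{\overline F}$, while the facets of $\st_\D i \cup \st_\D j$ are precisely the facets of $\D$ containing $i$ or $j$.

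The first step is to rule out $\{i,j\} \subseteq \overline F$. This would require a 2-saturating set of cardinality two, which Remark \ref{small} forbids: when $I(\H_{\overline F})$ is the maximal ideal of $k[x_v : v \in \overline F]$, the 2-saturating sets are exactly $\emptyset$ and the singletons (for then $\widetilde{I(\H_{\overline F})^2} \setminus I(\H_{\overline F})^2$ consists only of squarefree monomials of degree at most one, so by Lemma \ref{saturation} only sets of size at most one can arise), and when $I(\H_{\overline F})$ is not maximal every 2-saturating set has at least three elements. Hence $i \in F$ or $j \in F$.

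Next, when $|\{i,j\} \cap \overline F| \le 1$, Remark \ref{small} again forces $I(\H_{\overline F})$ to be the maximal ideal of $k[x_v : v \in \overline F]$, i.e.\ every vertex of $\overline F$ is isolated in $\H_{\overline F}$. Combinatorially, this says that for every $v \in \overline F$ some edge $E \in \H$ satisfies $E \cap \overline F = \{v\}$, so $\overline F \setminus \{v\}$ fails to cover $\H$; equivalently, $\overline F$ is a minimal cover of $\H$, which means $F$ is a facet of $\D$. Reversing these implications shows that every facet of $\D$ containing $i$ or $j$ gives a facet of $\D_\a(I^2)$, completing the identification of facets.

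The main technical point is confirming that no 2-saturating set has cardinality exactly two. Remark \ref{small} treats the non-degenerate case directly, and in the degenerate case $I(\H_{\overline F}) = \mm_{\overline F}$ one verifies via Lemma \ref{saturation} that only squarefree monomials of degree at most one lie in $\widetilde{\mm^2}\setminus \mm^2$. With this in hand, matching facets yields $\D_\a(I^2) = \st_\D i \cup \st_\D j$.
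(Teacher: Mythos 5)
Your proof is correct and follows essentially the same route as the paper: apply Lemma \ref{facet 2}, use Remark \ref{small} to exclude 2-saturating sets of cardinality two, and identify the remaining case with minimal covers, i.e.\ facets of $\D$ containing $i$ or $j$. The only cosmetic difference is that you re-derive the content of Lemma \ref{minimal cover} (minimal covers are exactly the covers whose induced subhypergraph has only isolated vertices, hence 2-saturating sets of size at most one) instead of citing it, and you spell out the reverse inclusion that the paper leaves implicit.
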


\begin{proof} 
We have $H_\a = \{i,j\}$.  
By Lemma \ref{facet 2}, the facets of $\D_a$ are the independent sets $F$ of $\H$ 
such that $\{i,j\} \cap \overline F$ is a 2-saturating set of $\overline F$.
By Remark \ref{small}, a 2-saturating set of a cover of $\H$ can be empty or have one vertex or more than 2 vertices.
Therefore, we either have $\{i,j\} \cap \overline F \subseteq \{i\}$ or $\{i,j\} \cap \overline F  \subseteq \{j\}$.
Moreover, if $\overline F$ has a 2-saturating set of fewer than two vertices, then $\overline F$ must be a minimal cover by Lemma \ref{minimal cover}. Therefore $F$ must be a facet of $\D$ containing $i$ or $j$.                     
\end{proof}

In graph theory one defines the {\em diameter} of a graph $\G$ as the maximal distance between two vertices and denotes it by $\diam \G$,
where the distance is the minimal length of paths between the vertices. Let $\D^{(1)}$ denote the graph of the 1-dimensional faces of $\D$, the {\em 1-dimensional skeleton} of $\D$.

\begin{Theorem} \label{depth > 1}
Let $I$ be the edge ideal of a hypergraph $\H$. Assume that  $\depth R/I^2 > 1$. Then $\diam \D^{(1)} \le 2$. 
\end{Theorem}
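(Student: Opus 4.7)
The plan is to translate $\depth R/I^2 > 1$ into a connectedness statement via Proposition \ref{h1} and then combine it with Corollary \ref{star} through a two-coloring argument on a connecting path.

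From $\depth R/I^2 > 1$ I obtain $H_\mm^1(R/I^2) = 0$, so Proposition \ref{h1} yields that $\D_\a(I^2)$ is connected for every $\a \in \NN^n$. To show $\diam \D^{(1)} \le 2$, I fix distinct vertices $i, j$ of $\D^{(1)}$ and apply this with $\a = \e_i + \e_j$. By Corollary \ref{star} one has $\D_\a(I^2) = \st_\D i \cup \st_\D j$, whose facets (read off from the proof of that corollary) are precisely the facets of $\D$ containing $i$ or $j$. Consequently both $\{i\}$ and $\{j\}$ are vertices of $\D_\a(I^2)$, and an edge $\{u, v\}$ with $u \neq v$ lies in $\D_\a(I^2)$ if and only if $\{u, v, i\} \in \D$ or $\{u, v, j\} \in \D$. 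Connectedness then produces a path $i = v_0, v_1, \ldots, v_m = j$ in the $1$-skeleton of $\D_\a(I^2)$.

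I color each index $l \in \{0, \ldots, m\}$ \emph{red} if $v_l = i$ or $\{v_l, i\} \in \D$, and \emph{blue} if $v_l = j$ or $\{v_l, j\} \in \D$. Then $0$ is red, $m$ is blue, and for $0 < l \le m$ the edge $\{v_{l-1}, v_l\}$ of $\D_\a(I^2)$ extends in $\D$ by $i$ or by $j$, so $l$ receives at least one color. If some index is simultaneously red and blue, then either $v_l \in \{i, j\}$ (forcing $\{i, j\} \in \D$, distance $1$) or $v_l \notin \{i, j\}$ is a common $\D^{(1)}$-neighbor of $i$ and $j$ (distance $2$). Otherwise the colors partition $\{0, \ldots, m\}$, and I choose a transition $l$ with $l$ red and $l+1$ blue; the edge $\{v_l, v_{l+1}\}$ then extends in $\D$ by $i$ (making $l+1$ red, a contradiction) or by $j$ (making $l$ blue, a contradiction), so the monochromatic case is impossible. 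The main obstacle is pinning down the edge criterion for $\D_\a(I^2)$ from Corollary \ref{star}; once that is in hand, the coloring chase on the path is straightforward.
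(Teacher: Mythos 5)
Your proposal is correct and follows essentially the same route as the paper: reduce $\depth R/I^2>1$ to $H_\mm^1(R/I^2)=0$, invoke Proposition \ref{h1} and Corollary \ref{star} to get connectedness of $\st_\D i\cup\st_\D j$, and conclude $\dist_\D(i,j)\le 2$. Your red/blue coloring of a connecting path merely spells out the last implication, which the paper asserts without detail, so the two arguments coincide in substance.
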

 
\begin{proof}	
It is known that $\depth R/I^2 > 1$ implies $H_\mm^1(R/I^2) = 0$.
By Proposition \ref{h1}, this implies that $\D_\a(I^2)$ is connected for all $\a \in \NN^n$.
Hence $\st_\D i \cup \st_\D j$ is connected for all $i \neq j$ by Corollary \ref{star}.
This means that the distance between $i,j$ is not greater than $2$. Thus, $\diam \D^{(1)} \le 2$. 
\end{proof}

It is sometimes better to formulate the above condition on $\D$ in terms of the hypergraph $\H$,

\begin{Lemma}\label{non-edge}
$\diam \D^{(1)} \le 2$ if and only if for every edge $\{i,j\}$ of $\H$, 
there exists a non-isolated vertex $h \neq i, j$ such that $\{i,h\}$ and $\{j,h\}$ are non-edges of $\H$.
\end{Lemma}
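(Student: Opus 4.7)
The plan is to translate the diameter condition directly into a statement about $\H$ by unwinding the definition of $\D^{(1)}$. Since $\D = \D(I)$ is the independence complex of $\H$---a set $F$ is a face of $\D$ iff $F$ contains no edge of $\H$---and since the edge ideal depends only on the minimal edges of $\H$, I may assume without loss of generality that $\H$ is a clutter. Under this assumption, the vertices of $\D^{(1)}$ are exactly the non-isolated vertices of $\H$, and a pair $\{i,j\}$ of distinct such vertices is an edge of $\D^{(1)}$ if and only if $\{i,j\} \notin \H$. This identification is the only step that requires genuine care; everything afterwards is a mechanical check.

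For the forward direction, I would let $\{i,j\}$ be an edge of $\H$. As $\H$ is a clutter, both $i$ and $j$ are non-isolated and therefore vertices of $\D^{(1)}$; on the other hand, $\{i,j\} \in \H$ prevents $\{i,j\}$ from being an edge of $\D^{(1)}$. The hypothesis $\diam \D^{(1)} \le 2$ then forces the distance between $i$ and $j$ in $\D^{(1)}$ to equal exactly $2$, and any length-$2$ path $i - h - j$ in $\D^{(1)}$ produces a non-isolated vertex $h \neq i,j$ with $\{i,h\}, \{j,h\} \notin \H$, which is the required condition.

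For the converse, I would take any two distinct vertices $u, v$ of $\D^{(1)}$, both necessarily non-isolated. If $\{u,v\} \notin \H$, then $\{u,v\}$ is already an edge of $\D^{(1)}$, so the distance is $1$. Otherwise $\{u,v\}$ is an edge of $\H$, and the hypothesis supplies a non-isolated $h \neq u,v$ with $\{u,h\}, \{v,h\} \notin \H$; then $u-h-v$ is a path of length $2$ in $\D^{(1)}$. Either way the distance is at most $2$, so $\diam \D^{(1)} \le 2$.

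The expected main obstacle is cosmetic rather than mathematical: justifying the reduction to a clutter (so that every edge $\{i,j\}$ of $\H$ involves two non-isolated vertices) and checking that the passage between "face of $\D$" and "non-edge of $\H$" behaves correctly at the level of $1$-faces. Once this translation is in place, both implications follow by inspection of paths of length at most $2$ in $\D^{(1)}$.
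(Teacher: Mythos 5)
Your proposal is correct and follows essentially the same route as the paper: both arguments reduce the claim to the observations that the vertices of $\D^{(1)}$ are the non-isolated vertices of $\H$ and that $\{i,j\}$ is a $1$-face of $\D$ exactly when $\{i,j\}$ is a non-edge, and then read off the distance-$\le 2$ condition. Your explicit reduction to a clutter is just a tidier way of handling the same implicit assumption the paper makes (namely that both endpoints of an edge $\{i,j\}$ under consideration are vertices of $\D$), so no substantive difference.
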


\begin{proof}
We note first that every vertex of $\D$ is a non-isolated vertex of $\H$ and 
that $\{i,j\} \in \D^{(1)}$ if and only if $\{i,j\}$ is a non-edge of $\H$.
Therefore, $\dist_\D(i,j) \le 2$ if and only if $\{i,j\}$ is a non-edge or 
there exists a non-isolated vertex $h \neq i, j$ such that $\{i,h\}$ and $\{j,h\}$ are non-edges of $\H$.
This implies the assertion.
\end{proof}

Rinaldo, Terai and Yoshida  showed that $\depth R/I^{(2)} > 1$ if and only if $\diam \D^{(1)} \le 2$  \cite[Theorem 3.2]{RTY2}. 
There are examples such that $\diam \D^{(1)} \le 2$ but $\depth R/I^2 \le 1$. 

\begin{Example}
{\rm Let $\H$ be the hypergraph of all 3-elements subsets of $\{1,2,3,4\}$.
Then $\D$ is the complex of a complete graph on 4 vertices. Hence $\diam \D^{(1)} = 1$.
Since $\{1,2,3,4\}$ is a 2-saturating set of $\H$,  $\depth R/I^2 = 0$ by Theorem \ref{positive depth}.}
\end{Example}

We couldn't find a general criterion for $\depth R/I^2 > 1$.
The reason is that we don't know when $\D_\a(I^2)$ is connected for  $\a \in \NN^n$ with
$|H_\a| \ge 3$ except in the graph case, where we have the following description.

\begin{Lemma} \label{disconnect}
Let $\H$ be a graph and  $\a \in \NN^n$ such that $|H_\a| \ge 3$.
If $\D_\a(I^2)$ is disconnected, then $H_\a$ is a triangle of $\H$.
\end{Lemma}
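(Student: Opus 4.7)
The plan is to prove the contrapositive: if $|H_\a| \ge 3$ and $H_\a$ is not a triangle of $\H$, then $\D_\a(I^2)$ is connected. The starting point is Lemma \ref{facet 2}, which characterizes the facets as sets $F \subseteq V$ for which $H_\a \cap \overline F$ is a 2-saturating set of $\H_{\overline F}$. Combining Lemmas \ref{minimal cover} and \ref{neighbor} in the graph case, this 2-saturating set must be either (i) empty or a single vertex, with $\overline F$ a minimal cover of $\H$, or (ii) a triangle $T \subseteq H_\a$, with $\overline F$ minimal among covers containing $N(T)$. Correspondingly, the facets split into \emph{Type I}, where $F$ is a maximal independent set with $|F \cap H_\a| \ge |H_\a|-1$, and \emph{Type II}, where $F$ is a maximal independent set in $V \setminus N(T)$ containing $H_\a \setminus T$ for some triangle $T \subseteq H_\a$.

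A key observation is that the two types cannot occur simultaneously: if $H_\a$ contains a triangle $T$ (the prerequisite for Type II), then for every $v$ the set $H_\a \setminus \{v\}$ still contains an edge of $T$ (the edge opposite to $v$ if $v \in T$, or all three edges of $T$ if $v \notin T$), so no independent set can meet $H_\a$ in $|H_\a|-1$ or more vertices. It thus suffices to treat each type alone, and in both cases I aim to show that every pair of facets shares at least one vertex of $\D_\a(I^2)$, which will imply connectedness via a length-two path in the 1-skeleton. For Type I this is immediate: two facets $F, F'$ satisfy $|F \cap F' \cap H_\a| \ge 2(|H_\a|-1) - |H_\a| = |H_\a| - 2 \ge 1$.

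The main obstacle is the Type II case, where two facets $F, F'$ associated to triangles $T, T' \subseteq H_\a$ share at least $H_\a \setminus (T \cup T')$, so I must show $T \cup T' \subsetneq H_\a$. Since $H_\a$ contains a triangle but is not a triangle, $|H_\a| \ge 4$, and the case $T = T'$ is trivial. For $T \ne T'$, suppose for contradiction that $T \cup T' = H_\a$. If $T \cap T' \ne \emptyset$, pick $w \in T \cap T'$ and $v \in T' \setminus T \subseteq H_\a \setminus T$; since $T'$ is a triangle of $\H$, the edge $\{v, w\}$ lies in $\H$, so $v \in N(T)$, contradicting the Type II condition $(H_\a \setminus T) \cap N(T) = \emptyset$. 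If $T \cap T' = \emptyset$, then $H_\a \setminus T = T'$ must lie inside the independent set $F$, contradicting that $T'$ is a triangle of $\H$. This closes the argument.
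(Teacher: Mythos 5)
Your proof is correct, and while it draws on the same supporting results as the paper's (Lemma \ref{facet 2} together with the classification of 2-saturating sets of minimal and non-minimal covers in Lemma \ref{minimal cover} and Lemma \ref{neighbor}), it is organized quite differently. The paper argues directly: disconnectedness produces two disjoint facets $F,G$, so $V=\overline F\cup\overline G$ and hence $H_\a=(H_\a\cap\overline F)\cup(H_\a\cap\overline G)$; the two pieces are then forced, via Remark \ref{small} and Lemma \ref{dominating}, to be triangles and finally to coincide with $H_\a$. You instead prove the contrapositive by classifying \emph{all} facets into your Types I and II, showing the two types cannot coexist once $H_\a$ contains a triangle, and then showing that within a single type any two facets meet (inclusion--exclusion on $H_\a$ for Type I; the $T\cup T'$ analysis using $N(T)\subseteq\overline F$ and the independence of $F$ for Type II). Your route is somewhat longer but yields a stronger structural conclusion --- when $H_\a$ is not a triangle the facets of $\D_\a(I^2)$ pairwise intersect, and in fact your first subcase shows all Type II facets arise from one and the same triangle $T\subseteq H_\a$ --- whereas the paper's version is shorter because it only has to examine a single disjoint pair of facets and can exploit the covering identity $H_\a=(H_\a\cap\overline F)\cup(H_\a\cap\overline G)$, which your argument never needs. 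Every step of yours checks out, including the mutual-exclusivity observation (an independent set meeting $H_\a$ in at least $|H_\a|-1$ vertices would contain an edge of any triangle inside $H_\a$) and the two subcases for $T\ne T'$; the clean dichotomy between your types is justified precisely because Lemma \ref{minimal cover} also asserts that minimal covers are the only covers admitting a 2-saturating set of fewer than two elements.
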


\begin{proof}	
Assume that $\D_\a(I^2)$ is disconnected.
Then $\D_\a(I^2)$ has two disjoint facets, say $F, G$. 
For brevity set $F' = H_\a \cap \overline F$ and  $G' = H_\a \cap \overline G$. 
By Lemma \ref{facet 2}, $F'$ and  $G'$ are 2-saturating sets of $\overline F$ and $\overline G$, respectively.
Since $V = \overline F \cup \overline G$, $H_\a = F'  \cup G'$.
Hence, one of these two sets, say $F'$ must have at least 2 elements.
By Remark \ref{small}, we must have $|F'| \ge 3$. Therefore, 
$F'$ is  a triangle and $\overline F$ contains $N(F')$ by Lemma \ref{dominating}. \par

If $|G'| \le 1$, then $|H_\a \setminus G'| \ge 2$.
Since $H_\a \setminus G' \subseteq F'$, $H_\a \setminus G'$ contains an edge of the triangle.
This contradicts the fact that $H_\a \setminus G' \subseteq G$ is an independent set.
Therefore, $|G'| \ge 3$ by Remark \ref{small}. Similarly as above, this implies that $G'$ is a triangle.
If $|G' \setminus F'| \ge 2$, then $G' \setminus F'$ contains an edge of this triangle.
This contradicts the fact that $G' \setminus F' \subseteq H_\a \setminus \overline F \subseteq F$ is an independent set.
If $|G' \setminus F'| = 1$, then $|G' \cap F'| = 2$. 
Hence the vertex of $G' \setminus F'$ is adjacent to at least two vertices of $F'$. 
As a consequence, this vertex belongs to $N(F') \cap F$.
This contradicts the fact that $\overline F$ contains $N(F')$.
So we have shown that $|G' \setminus F'| = 0$, which implies $G' = F'$ because $|G'| = |F'| = 3$.
Thus, $H_\a = G' = F'$ is a triangle.
\end{proof}

Let $\D_U$ denote the induced subcomplex of $\D$ on a subset $U \subseteq V$.

\begin{Lemma} \label{induced}
Let $\H$ be a graph and $\a \in \NN^n$ such that $H_\a$ is a triangle. Then
$\D_\a(I^2) = \D_{\overline{N(H_\a)}}$, where $\overline{N(H_\a)}$ denotes the complement of $N(H_\a)$ in $V$.
\end{Lemma}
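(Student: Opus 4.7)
The plan is to establish the equality face by face, showing that $F \in \D_\a(I^2)$ if and only if $F$ is an independent set of $\H$ contained in $\overline{N(H_\a)}$. Since $\rho_j(I^2) = 2$ for every $j$, Takayama's formula only records contributions from $\a$ with $a_j \le 1$, so it is enough (and it is the case actually needed for the application via Proposition \ref{h1}) to treat $\a \in \{0,1\}^n$; in this range the vector $\a_F$ is again squarefree with support $H_\a \cap \overline F$, and the argument behind Lemma \ref{facet 2} (applied at the level of faces rather than facets) yields that $F$ is a face of $\D_\a(I^2)$ precisely when $H_\a \cap \overline F$ is indecomposable in the induced subhypergraph $\H_{\overline F}$.

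For the inclusion $\D_{\overline{N(H_\a)}} \subseteq \D_\a(I^2)$, suppose $F$ is independent with $F \subseteq \overline{N(H_\a)}$; then $H_\a \subseteq \overline F$ and $H_\a \cap \overline F = H_\a$. The crucial observation is that every edge of $\H_{\overline F}$ lying inside $H_\a$ must be one of the three original triangle edges of $H_\a$. Indeed, such an edge has the form $e \cap \overline F$ for some $e = \{u, w\} \in \H$; if $w \in F$, then $u \in H_\a$ and $w$ would be adjacent to $u$, contradicting $F \cap N(H_\a) = \emptyset$, so both $u$ and $w$ lie in $\overline F$ and hence $e = \{u, w\} \subseteq H_\a$ is a triangle edge. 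Since any two edges of a triangle share a vertex, no two disjoint edges of $\H_{\overline F}$ fit inside $H_\a$, so $H_\a$ is indecomposable in $\H_{\overline F}$ and $F$ is a face of $\D_\a(I^2)$.

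Conversely, let $F \in \D_\a(I^2)$; I verify in turn that $F$ is independent, that $F \cap H_\a = \emptyset$, and that $F \subseteq \overline{N(H_\a)}$. If $F$ contained an edge $e$ of $\H$, then $e \cap \overline F = \emptyset \in \H_{\overline F}$ would trivially decompose any set, so $F$ must be independent. If $v \in F \cap H_\a$, independence forces $H_\a \cap \overline F = \{u_1, u_2\}$ (the other two triangle vertices), and the graph edges $\{v, u_1\}$ and $\{v, u_2\}$ then produce disjoint singleton edges $\{u_1\}, \{u_2\} \in \H_{\overline F}$ that decompose $\{u_1, u_2\}$, a contradiction. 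Finally, if $w \in F$ is adjacent to some $v \in H_\a$, the graph edge $\{v, w\}$ contributes a singleton edge $\{v\} \in \H_{\overline F}$; paired with the triangle edge of $H_\a$ opposite to $v$, this gives two disjoint edges of $\H_{\overline F}$ inside $H_\a$, again a contradiction. The only real technical point, and hence the main ``obstacle'', is to track how graph edges straddling $F$ and $H_\a$ create singleton edges in $\H_{\overline F}$ that are supported on $H_\a$; once those are in hand, the elementary triangle property that any two of its edges meet delivers the result in both directions.
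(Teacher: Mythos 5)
Your proof is correct, and it takes a genuinely different, more elementary route than the paper's. The paper argues at the level of facets: it uses Lemma \ref{facet 2} to identify the facets $F$ of $\D_\a(I^2)$ with the covers $\overline F$ admitting $H_\a\cap\overline F$ as a 2-saturating set, then Remark \ref{small} to force $H_\a\cap\overline F=H_\a$, and finally Lemma \ref{neighbor} to translate ``$H_\a$ is 2-saturating in $\overline F$'' into ``$\overline F$ is minimal among the covers containing $N(H_\a)$'', i.e.\ $F$ is a facet of $\D_{\overline{N(H_\a)}}$; equality of the facet sets then yields equality of the complexes. You instead compare the two complexes face by face, using only the membership criterion that $F\in\D_\a(I^2)$ iff $H_\a\cap\overline F$ is indecomposable in $\H_{\overline F}$ (which is indeed what Lemma \ref{facet} combined with the first half of the proof of Lemma \ref{saturation} gives for squarefree $\a$), together with a direct analysis of which edges of $\H_{\overline F}$ can lie inside the triangle. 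This bypasses the 2-saturating machinery (Remark \ref{small}, Lemmas \ref{dominating} and \ref{neighbor}) entirely, and your three verifications in the converse direction --- independence of $F$, then $F\cap H_\a=\emptyset$, then $F\cap N(H_\a)=\emptyset$ --- are each backed by a legitimate witness of decomposability ($\{u_1\}\sqcup\{u_2\}$, resp.\ $\{v\}\sqcup\{u_1,u_2\}$). What each approach buys: the paper's proof recycles structural lemmas already established for other purposes and is shorter on the page; yours is self-contained and makes visible exactly where the triangle hypothesis enters. Finally, your explicit restriction to $\a\in\{0,1\}^n$ is not a loss of generality but a necessary precaution: for $\a=(2,1,1,0,\dots,0)$ with $\{1,2,3\}$ a triangle one has $x^\a=(x_1x_2)(x_1x_3)\in I^2$, so $\D_\a(I^2)$ is the void complex while $\D_{\overline{N(H_\a)}}$ is not; the squarefree case is the only one used in Theorem \ref{depth > 1 graph} (where Theorem \ref{Takayama} justifies the reduction) and is what the paper's own argument implicitly assumes through its reliance on Lemma \ref{saturation}.
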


\begin{proof}	
Let $F$ be a facet of $\D_\a(I^2)$. 
By Lemma \ref{facet 2},  $H_\a \cap \overline F$ is a 2-saturating set of  $\overline F.$
Since $\overline F$ is a cover of $\H$, $H_\a \setminus \overline F$ is an independent set.
Hence $H_\a \setminus \overline F$ doesn't contain any edge of the triangle.
Therefore $|H_\a \setminus \overline F| \le 1$, which implies  $|H_\a \cap \overline F| \ge 2$.
By Remark \ref{small},  we must have $|H_\a \cap \overline F| = 3 = |H_\a|$.   
Hence $H_\a =  H_\a \cap \overline F$.
By Lemma \ref{neighbor}, $\overline F$ is minimal among the covers of $\H$ which contains $N(H_\a)$.
This means $F$ is maximal among the independent sets in $\overline{N(H_\a)}$.
Hence $F$ is a facet of $\D_{\overline{N(H_\a)}}$. \par
For the converse let $F$ be a facet of $\D_{\overline{N(H_\a)}}$. 
Then $\overline F$ is minimal among the covers of $\H$ which contain $N(H_\a)$.
By Lemma \ref{neighbor}, $H_\a$ is a 2-saturating set of $\overline F$.
Hence $F$ is a facet of $\D_\a(I^2)$ by Lemma \ref{facet 2}. 
So we have proved that $\F(\D_\a(I^2)) = \F(\D_{\overline{N(H_\a)}})$, which implies $\D_\a(I^2) = \D_{\overline {N(H_\a)}}$. 
\end{proof}	

Using the above results we can give a combinatorial criterion for $\depth R/I^2 > 1$.

\begin{Theorem} \label{depth > 1 graph}
Let $I$ be the edge ideal of a graph $\H$.  Let $\overline \H$ denote the graph of the non-edges of $\H$.
Then $\depth R/I^2 > 1$ if and only if  the following conditions are satisfied:\par
{\rm (a)} $\diam \overline \H \le 2$,\par
{\rm (b)} For every triangle $U$ of $\H$, $\overline{N(U)}$ has at least two elements and 
the induced subgraph $\overline \H|_{\overline{N(U)}}$ is connected.
\end{Theorem}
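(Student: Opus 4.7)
The plan is to combine the criterion
\[
\depth R/I^2 > 1 \iff H_\mm^0(R/I^2) = 0 \text{ and } H_\mm^1(R/I^2) = 0
\]
with the combinatorial translations developed in Sections~2--4. The $H_\mm^0$ part is handled by Theorem \ref{depth > 0 graph}: it is equivalent to $\H$ having no dominating triangle, which is already forced by (b) since a dominating triangle $U$ yields $\overline{N(U)} = \emptyset$. For $H_\mm^1(R/I^2) = 0$ I invoke Proposition \ref{h1}: the vanishing is equivalent to the conjunction of ($\alpha$)~$\D_\a(I^2)$ being connected for all $\a \in \NN^n$, and ($\beta$)~$\depth R_j/I_j^2 > 0$ for every vertex $j$.

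I would analyse ($\alpha$) by cases on $|H_\a|$, restricting to $\a \in \{0,1\}^n$ by virtue of $\rho_j(I^2) = 2$. For $|H_\a| \le 1$, Lemma \ref{facet 2} together with Lemma \ref{minimal cover} identifies $\D_\a(I^2) = \D$, whose connectedness is that of its $1$-skeleton $\overline \H$ and is implied by (a). For $|H_\a| = 2$, say $\a = \e_i + \e_j$, Corollary \ref{star} gives $\D_\a(I^2) = \st_\D i \cup \st_\D j$; each star is connected at its apex, so the union is connected iff the two stars share a vertex, which is precisely $\dist_{\overline \H}(i,j) \le 2$. Hence this case is exactly condition (a). For $|H_\a| \ge 3$, Lemma \ref{disconnect} rules out disconnectedness unless $H_\a = U$ is a triangle, and Lemma \ref{induced} then identifies $\D_\a(I^2)$ with the induced subcomplex $\D_{\overline{N(U)}}$, whose connectedness (when $|\overline{N(U)}| \ge 2$) is that of its $1$-skeleton $\overline\H|_{\overline{N(U)}}$---the connectivity clause of (b).

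For ($\beta$) I would describe $\H_j$ explicitly as the hypergraph on $V \setminus \{j\}$ whose edges are the singletons $\{v\}$ for $v \in N_\H(j)$ together with the edges of $\H$ not containing $j$. Using Lemma \ref{saturating}, combined with the observation (from the proof of Lemma \ref{special}) that a loosely intersecting hypergraph on more than one vertex has no isolated vertex, any 2-saturating set $U$ of $\H_j$ with $|U| \ge 2$ must avoid $N_\H(j)$. Then $\H_j|_U = \H|_U$, so by Corollary \ref{triangle}, $U$ is a triangle of $\H$ disjoint from $N_\H(j) \cup \{j\}$; the link condition of Lemma \ref{saturating}(b) unfolds to: every vertex of $V \setminus (N_\H(j) \cup \{j\} \cup U)$ is adjacent in $\H$ to a vertex of $U$, equivalently $\overline{N(U)} \setminus \{j\} \subseteq N_\H(j)$. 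The residual case of a 2-saturating set of size $\le 1$ would force $j$ to be universal in $\H$, hence isolated in $\overline\H$, contradicting (a). Thus ($\beta$) amounts to: for every triangle $U$ and every $j \in \overline{N(U)}$, $j$ is non-isolated in $\overline\H|_{\overline{N(U)}}$. This, together with the absence of dominating triangles (from $H_\mm^0 = 0$, equivalently the $\overline{N(U)} \neq \emptyset$ part of (b)), forces $|\overline{N(U)}| \ge 2$; combined with the connectedness of $\overline\H|_{\overline{N(U)}}$ coming from ($\alpha$), this yields exactly condition (b). The converse direction is obtained by reading the same chain of equivalences backward.

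The main obstacle will be the analysis of ($\beta$): because $\H_j$ is a genuine hypergraph with singleton edges, Corollary \ref{triangle} cannot be invoked directly, and some care is needed to translate the link condition of Lemma \ref{saturating}(b) into the purely graph-theoretic statement relating $U$, $N(U)$, and $N_\H(j)$. A further delicacy is to reconcile the ``$|\overline{N(U)}| \ge 2$'' requirement of (b) with the weaker ``no isolated vertex in $\overline\H|_{\overline{N(U)}}$'' condition that falls out of ($\beta$), by exhibiting the precise interplay of ($\alpha$), ($\beta$), and the $H_\mm^0$-condition.
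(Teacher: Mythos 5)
Your proposal is correct and follows essentially the same route as the paper: the same reduction to the vanishing of $H_\mm^0$ and $H_\mm^1$ via Proposition \ref{h1}, the same case analysis on $|H_\a|$ using Corollary \ref{star}, Lemma \ref{disconnect} and Lemma \ref{induced}, and the same translation of the two conditions. If anything, your direct analysis of the $2$-saturating sets of the induced hypergraph $\H_{V \setminus j}$ via Lemma \ref{saturating} is more careful than the paper's appeal to Theorem \ref{depth > 0 graph} at that step, since that hypergraph has singleton edges and is not literally a graph.
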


\begin{proof}	
We know that $\depth R/I^2 > 1$ if and only if $H_\mm^i(R/I^2) = 0$ for $i = 0,1$.
By Theorem \ref{depth > 0 graph}, $H_\mm^0(R/I^2) = 0$ if and only if $\H$ has no dominating triangle.
This means that $\overline{N(U)} \neq \emptyset$ for any triangle $U$ of $\H$.
Under this condition, we only need to prove that $H_\mm^1(R/I^2) = 0$ if and only if (a) and (b) are satisfied. 
By Proposition \ref{h1}, $H_\mm^1(R/I^2) = 0$  if and only if 
$\D_\a(I^2)$ is connected  for all $\a \in \NN^n$ and $\depth R_j/I_j> 0$ for all $j \in V$, 
where $R_j = k[x_i| i \neq j]$ and $I_j = IR[x_j^{-1}]\cap R_j$. 
\par

Assume that $H_\mm^1(R/I^2) = 0$.
Then (a) is satisfied by Proposition \ref{depth > 1} because $\D^{(1)} = \overline \H$.
To prove (b) let $U$ be an arbitrary triangle of $\H$ and $j \in \overline{N(U)}$. 
Let  $C = V \setminus j$. Then $I_j$ is the edge ideal of the induced subhypergraph $\H_C$. 
Since $\depth R_j/I_j > 0$, $\H_C$ has no dominating triangle by Theorem \ref{depth > 0 graph}. 
Hence $N(U) \neq C$. Therefore,  $\overline{N(U)}$ has at least two elements.
Let $\a \in \NN^n$ such that $H_\a = U$. 
By Lemma \ref{induced}, $\D_\a(I^2) = \D_{\overline{N(U)}}$.
Hence $\D_{\overline{N(U)}}$ is connected. Since $\overline \H$ is the one-dimensional skeleton of $\D$,
$\overline \H|_{\overline{N(U)}}$  is the one-dimensional skeleton of $\D_{\overline{N(U)}}$.
Therefore, $\overline \H|_{\overline{N(U)}}$ is also connected. \par

Conversely, assume that (a) and (b) are satisfied. 
From the condition that $\overline{N(U)}$ has at least two elements for any triangle $U$ of $\H$ 
we can show similarly as above that $\depth R_j/I_j > 0$ for all $j \in V$.
It remains to show that  $\D_\a(I^2)$ is connected  for all $\a \in \NN^n$.
By Theorem \ref{Takayama} we may assume that $\a \in \{0,1\}^n$.
If $|H_\a| \le 1$,  then $\a = \0$ or $\a = \e_i$ for some $i$. By Remark \ref{initial}, $\D_\a(I^2) = \D$, which is 
connected by (a). If  $|H_\a| = 2$, then $\a = \e_i + \e_j$ for some $i \neq j$. By Corollary \ref{star}, $\D_\a(I^2) = \st_\D i \cup \st_\D j$, which is connected by (a).
If $|H_\a| \ge 3$, we may assume that $H_\a$ is a triangle by Lemma \ref{disconnect}.
Then $\D_\a(I^2) = \D_{\overline{N(H_\a)}}$ by Lemma \ref{induced}.  Hence $\D_\a(I^2)$ is connected by (b).
\end{proof}

\begin{Corollary} \label{bipartite}
Let $I$ be the edge ideal of a bipartite graph $\H$.  Let $F$ and $G$ be a partition of the vertices of $\H$ such that every edge connects a vertex in $F$ to a vertex in $G$. Then $\depth R/I^2 > 1$ if and only if there exist neither a vertex $i \in F$ with $N(i) = G$ nor a vertex $j \in G$ with $N(j) = F$.
\end{Corollary}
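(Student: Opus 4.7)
The plan is to reduce to Theorem~\ref{depth > 1 graph} and then carry out a bipartite analysis of $\diam \overline{\H}$. Since $\H$ is bipartite, it contains no triangles, so condition~(b) of Theorem~\ref{depth > 1 graph} is vacuously satisfied and the theorem collapses to the statement that $\depth R/I^2 > 1$ if and only if $\diam \overline{\H} \le 2$. The whole proof then amounts to translating this diameter condition into the bipartite neighborhood condition in the Corollary.

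For the translation, I would apply Lemma~\ref{non-edge}: $\diam \overline{\H} \le 2$ holds if and only if for every edge $\{i, j\}$ of $\H$ there is some vertex $h \ne i, j$ with $\{i, h\}$ and $\{j, h\}$ both non-edges of $\H$ (the ``non-isolated'' restriction in Lemma~\ref{non-edge} is vacuous in the graph setting, since graphs have no singleton edges). Fix such an edge with $i \in F$ and $j \in G$. Any $h \in F \setminus \{i\}$ has $\{i, h\}$ automatically a non-edge, so it works precisely when $h \notin N(j)$; symmetrically, $h \in G \setminus \{j\}$ works precisely when $h \notin N(i)$. Using $i \in N(j)$ and $j \in N(i)$, the failure of existence of a suitable $h$ is equivalent to the simultaneous conditions $N(j) = F$ and $N(i) = G$.

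From this it is immediate that if no $i \in F$ satisfies $N(i) = G$ and no $j \in G$ satisfies $N(j) = F$, then for every edge $\{i, j\}$ a suitable connector $h$ exists and therefore $\diam \overline{\H} \le 2$. Conversely, if some $i_0 \in F$ has $N(i_0) = G$, then every $j \in G$ is a neighbor of $i_0$, and applying the diameter condition to each edge $\{i_0, j\}$ forces $N(j) \ne F$ for every $j \in G$; a symmetric argument handles the case when some $j_0 \in G$ has $N(j_0) = F$. Combining these two implications yields the ``neither $\ldots$ nor $\ldots$'' form of the Corollary. The main bookkeeping step, and the point I would check most carefully, is this last one: one must exploit that a ``universal'' vertex on one side of the partition forces a specific extremal edge whose two possible length-two connectors in $\overline{\H}$ are both ruled out, and then invoke the symmetry of the bipartition to rule out universal vertices on both sides at once.
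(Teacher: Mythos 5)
Your reduction is exactly the paper's route: bipartiteness kills all triangles, so condition (b) of Theorem \ref{depth > 1 graph} is vacuous and everything comes down to $\diam \overline{\H} \le 2$; your edge-by-edge analysis of when a length-two connector exists in $\overline{\H}$ is also correct, and in fact more careful than the paper's one-line translation. The problem is the last step. What your analysis actually establishes is: $\diam\overline\H\le 2$ if and only if there is \emph{no single edge} $\{i,j\}$ with $N(i)=G$ and $N(j)=F$ simultaneously, i.e.\ if and only if it is not the case that \emph{both} a vertex $i\in F$ with $N(i)=G$ and a vertex $j\in G$ with $N(j)=F$ exist (such $i,j$ are automatically adjacent). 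Your ``conversely'' paragraph shows only that if some $i_0\in F$ has $N(i_0)=G$ then no $j\in G$ has $N(j)=F$ --- which is perfectly consistent with such an $i_0$ existing --- and the concluding sentence ``combining these two implications yields the neither--nor form'' is a non sequitur: nothing you proved rules out the existence of $i_0$ itself.

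The gap is not repairable, because the conjunctive ``neither $\ldots$ nor'' condition is strictly stronger than what $\diam\overline\H\le2$ gives. Take $F=\{1,2,5\}$, $G=\{3,4\}$ with edges $\{1,3\},\{1,4\},\{2,3\},\{4,5\}$. Here $N(1)=G$, so the Corollary's condition fails; but $N(3)=\{1,2\}\ne F$ and $N(4)=\{1,5\}\ne F$, and one checks directly that every pair of vertices is at distance at most $2$ in $\overline\H$ (e.g.\ $1$ and $3$ are joined through $5$, and $1$ and $4$ through $2$). Since $\H$ is triangle-free, Theorem \ref{depth > 1 graph} (equivalently $I^{(2)}=I^2$ combined with \cite[Theorem 3.2]{RTY2}) gives $\depth R/I^2>1$. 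So the correct translation of $\diam\overline\H\le2$ is the disjunction ``no $i\in F$ with $N(i)=G$, \emph{or} no $j\in G$ with $N(j)=F$''; the Corollary as printed, and the paper's own proof --- which makes the same unjustified leap in asserting that $\diam\overline\H\le2$ forces \emph{every} $i\in F$ to have an $\overline\H$-neighbour in $G$ --- need to be amended accordingly. Your write-up deserves credit for deriving the right per-edge criterion; the error is confined to the final Boolean bookkeeping, which is precisely the step you flagged as the one to check.
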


\begin{proof} 
Since $\H$ has no triangles, $\depth R/I^2 > 1$ if and only if $\diam \overline \H \le 2$.  
Since the induced subgraphs of $\overline \H$ on $F$ and $G$ are complete graphs, $\diam \overline H \le 2$ if and only if 
every vertex $i \in F$ is adjacent to a vertex in $G$ and every vertex $j \in G$ is adjacent to a vertex of $F$ in $\overline H$.
Translating this condition in terms of $\H$ we obtain the assertion.
\end{proof}

\begin{Example}
{\rm To illustrate  Theorem \ref{depth > 1 graph} we consider the following graphs:

\begin{picture}(0,80)
\put(40,20){\line(2,1){40}}
\put(40,20){\line(0,1){40}}
\put(40,60){\line(2,-1){40}}
\put(80,40){\line(1,0){40}}
\put(120,40){\line(1,0){40}}
\put(160,40){\line(1,0){40}}
\put(40,20){\circle{2}}
\put(40,60){\circle{2}}
\put(80,40){\circle{2}}
\put(120,40){\circle{2}}
\put(160,40){\circle{2}}
\put(200,40){\circle{2}}
\put(125,15){I}

\put(240,20){\line(2,1){40}}
\put(240,20){\line(0,1){40}}
\put(240,60){\line(2,-1){40}}
\put(280,40){\line(1,0){40}}
\put(320,40){\line(2,1){40}}
\put(320,40){\line(2,-1){40}}
\put(240,20){\circle{2}}
\put(240,60){\circle{2}}
\put(280,40){\circle{2}}
\put(320,40){\circle{2}}
\put(360,60){\circle{2}}
\put(360,20){\circle{2}}
\put(300,15){II}
\end{picture}

\noindent Graph I does satisfy (a) but not (b), whereas graph II does satisfy (b) but not (a).
Since $\depth R/I^2 > 0$ by Theorem \ref{depth > 0 graph}, $\depth R/I^2 = 1$ in both cases.}
\end{Example}

\section{Cover ideals}

Let $\G$ be a hypergraph. The {\em cover ideal} of $\G$ is defined to be the intersection of all ideals $P_C$, $C \in \G$.
Let $\H(\G)$ denote the hypergraph of the covers of $\G$.  
It is easy to see that the cover ideal of $\G$ is the edge ideal of $\H(\G)$.
Therefore, we can use the results of the previous sections to study the associated primes and the depth of the second power of a cover ideal.
\par

Let $I$ be the cover ideal of $\G$. We need to describe the 2-saturating sets of covers of $\H(\G)$ 
in order to characterize the associated primes of $I^2$ and the depth of $R/I^2$. This can be done if $\G$ is a graph.
For that we need the following observation.

\begin{Lemma} \label{section}
Let $\G$ be a graph and $C$ a set of vertices.
The  hypergraph $\H(\G)_C$ and $\H(\G|_C)$ have the same minimal edges.
\end{Lemma}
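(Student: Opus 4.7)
The plan is to establish two facts: (a) $\H(\G)_C \subseteq \H(\G|_C)$ as families of sets, and (b) every minimal cover of $\G|_C$ lies in $\H(\G)_C$. Together these force the minimal edges of the two hypergraphs to coincide.

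For (a), let $D$ be a vertex cover of $\G$. For any edge $e$ of $\G|_C$, the set $e$ is an edge of $\G$ contained in $C$, so $D$ meets $e$, and since $e \subseteq C$ we also have $(D \cap C) \cap e \neq \emptyset$. Hence $D \cap C$ is a cover of $\G|_C$, i.e.\ an edge of $\H(\G|_C)$.

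For (b), given a minimal cover $E$ of $\G|_C$, I would lift it to a cover of $\G$ by setting $D := E \cup (V \setminus C)$. Any edge of $\G$ either lies inside $C$, in which case it belongs to $\G|_C$ and is met by $E$, or has a vertex outside $C$ and is met by $V \setminus C$. Since $E \subseteq C$, we have $D \cap C = E$, so $E \in \H(\G)_C$.

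Combining (a) and (b) then yields the equality of minimal edges, which is the main (and only) delicate point of the argument. A minimal edge of $\H(\G|_C)$ lies in $\H(\G)_C$ by (b) and remains minimal there because $\H(\G)_C$ is a subfamily of $\H(\G|_C)$. Conversely, a minimal edge $E'$ of $\H(\G)_C$ is a cover of $\G|_C$ by (a); if there were some strictly smaller cover of $\G|_C$, then a minimal cover of $\G|_C$ contained in it would, by (b), belong to $\H(\G)_C$ and contradict the minimality of $E'$. Thus the minimal edges of $\H(\G)_C$ and $\H(\G|_C)$ agree, as required.
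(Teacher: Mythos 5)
Your proof is correct and follows essentially the same route as the paper's: both arguments rest on the two observations that intersecting a cover of $\G$ with $C$ yields a cover of $\G|_C$, and that a cover $E$ of $\G|_C$ lifts to the cover $E \cup (V \setminus C)$ of $\G$, which satisfies $(E \cup (V \setminus C)) \cap C = E$. The only difference is how the two facts are packaged --- you state the first as an inclusion $\H(\G)_C \subseteq \H(\G|_C)$ and then compare minimal elements, while the paper phrases it as ``every edge of each hypergraph contains an edge of the other'' --- and this is immaterial.
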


\begin{proof} 
We only need to show that every edge of $\H(\G)_C$ resp. $\H(\G|_C)$ is contained in an edge of $\H(\G|_C)$ resp. $\H(\G)_C$.
Let $F$ be an arbitrary edge of $\H(\G)_C$. 
Then $F = H \cap C$ for some cover $H$ of $\G$.
Let $D$ denote the vertex set of the graph $\G|_C$. Then $F \cap D$ is a cover of $\G|_C$.
Hence $F$ contains an edge of $\H(\G|_C)$.  Let $G$ be an arbitrary edge of  $\H(\G|_C)$. 
Then $G$ meets every edge of $\G$ in $C$.
Hence $G \cup (V \setminus C)$ is a cover of $\G$. Since $(G \cup (V \setminus C)) \cap C = G$,
$G$ is an edge of $\H(\G)_C$. 
\end{proof}

In the following we call a set of vertices in a graph an induced cycle of the graph if it is the vertex set 
of an induced cycle.

\begin{Proposition} \label{odd}
Let $\H = \H(\G)$ for a graph $\G$. Let $C$ be a cover of $\H$.
A subset $U$ is a 2-saturating set of $C$ if and only if 
$|U| \le 1$ and $C$ is an edge or $U = C$ and $C$ is an induced odd cycle of $\G$.
\end{Proposition}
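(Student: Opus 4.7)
The plan is to translate the 2-saturating condition for $\H_C$ into a purely graph-theoretic statement about $\G|_C$, and then carry out a case analysis on $|U|$. The first step is to sharpen Lemma \ref{section} in the graph setting to the full equality $\H_C = \H(\G|_C)$ as hypergraphs on $C$: for any $F\subseteq C$ covering $\G|_C$, the cover $H := F\cup(V\setminus C)$ of $\G$ satisfies $H\cap C = F$, so $F\in\H_C$, and the reverse inclusion is immediate. Applied after deleting a vertex, together with the general identity $\H_C(i) = \H_{C\setminus\{i\}}$, this yields $\H_C(i) = \H(\G|_{C\setminus\{i\}})$. The 2-saturating conditions therefore translate into: (1) $U$ contains no two disjoint subsets which both cover $\G|_C$; (2) for every $i\in C$, $U\setminus\{i\}$ contains two disjoint subsets which both cover $\G|_{C\setminus\{i\}}$, or $\G|_{C\setminus\{i\}}$ has no edges (in which case $\emptyset$ is a cover and decomposability is automatic).

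For the case $|U|\le 1$, Lemma \ref{minimal cover} requires $C$ to be a minimal cover of $\H$, and $U$ to be empty or a singleton vertex of $C$. The minimal covers of $\H(\G)$ are precisely the edges of $\G$: $C$ covers $\H(\G)$ iff $C$ contains an edge of $\G$ (otherwise $V\setminus C$ would itself be a cover of $\G$ disjoint from $C$), and minimality collapses $C$ to an edge. This matches the first alternative of the proposition.

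For the case $|U|\ge 2$, Remark \ref{small} forces $|U|\ge 3$, and I need to prove $U=C$ with $\G|_C$ an induced odd cycle. The central observation is that if disjoint $F_1,F_2\subseteq U$ both cover a graph $\G'$, then every edge of $\G'$ has one endpoint in $F_1$ and the other in $F_2$, both inside $U$. Suppose $|C\setminus U|\ge 2$ and pick distinct $i,i'\in C\setminus U$: applying (2) at $i$ and at $i'$ forces every edge of $\G|_C$ not containing $i$, and every edge not containing $i'$, into $\G|_U$, so the only candidate edge of $\G|_C$ outside $\G|_U$ is $\{i,i'\}$. But Lemma \ref{saturating} guarantees $\H_C|_U$ is nonempty (it is loosely intersecting), which forces no edge of $\G|_C$ to lie in $C\setminus U$, ruling out $\{i,i'\}$; hence $\G|_C=\G|_U$, and (2) at $i$ says $\G|_U$ is bipartite while (1) says it is non-bipartite, a contradiction. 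If instead $|C\setminus U|=1$, say $C\setminus U=\{i\}$, set $N:=N_\G(i)\cap U$. Any cover $F\subseteq U$ of $\G|_C$ must contain $N$, so combining (1) and (2) at $i$ forces $N\ne\emptyset$ and $\G|_U$ bipartite. Choosing $w\in N$ and applying (2) at $w$ forces $N=\{w\}$. Applying (2) at any $j\in U\setminus\{w\}$, which exists since $|U|\ge 3$, yields a contradiction: the edge $\{i,w\}$ survives in $\G|_{C\setminus\{j\}}$, and $N\setminus\{j\}=\{w\}\ne\emptyset$ blocks the disjointness required of any pair of covers.

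Once $U=C$ is established, (1) unpacks to ``$\G|_C$ is non-bipartite'' (no two disjoint covers of $\G|_C$), and (2) at each $i\in C$ unpacks to ``$\G|_{C\setminus\{i\}}$ is bipartite''. Thus $\G|_C$ is critically non-bipartite, and an elementary argument identifies such a graph as an induced odd cycle: any odd cycle in $\G|_C$ must span all of $C$ (else delete a vertex outside it), and any chord would produce a strictly shorter odd cycle leaving some vertex outside it whose deletion still preserves non-bipartiteness. The converse is immediate: if $C$ is an edge of $\G$, Lemma \ref{minimal cover} supplies the 2-saturating sets with $|U|\le 1$; if $C$ is an induced odd cycle of $\G$ of length $2k+1$, then $\G|_C$ is non-bipartite and each $\G|_{C\setminus\{i\}}$ is a path admitting two disjoint alternating vertex covers, so $U=C$ is 2-saturating. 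The main obstacle is the $|C\setminus U|=1$ subcase in the reduction to $U=C$: here (1) no longer reduces to a simple bipartiteness statement, and the argument must track $N_\G(i)\cap U$ simultaneously with 2-saturating conditions at every vertex of $U$.
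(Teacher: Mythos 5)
Your proposal is correct and follows essentially the same route as the paper: reduce via Lemma \ref{section} to covers of induced subgraphs of $\G$, dispose of $|U|\le 1$ with Lemma \ref{minimal cover}, force $U=C$, and then identify the vertex-critically non-bipartite graphs as induced odd cycles. The only real divergence is the step forcing $U=C$, where you split on $|C\setminus U|\ge 2$ versus $|C\setminus U|=1$ and run a neighborhood analysis in the second case, whereas the paper gets the same conclusion more quickly by taking $v\in C\setminus U$, splitting on whether $v$ has a neighbour $w$ in $C$, and using a third vertex $i\ne v,w$ so that one of the two disjoint covers of $\G|_{C\setminus i}$ inside $U$ must contain $v$; both arguments are valid.
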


\begin{proof} 
 Let $|U| \le 1$. By Lemma \ref{minimal cover}, $U$ is a 2-saturating set of $C$ if and only if $C$ is a minimal cover of $\H$ 
or, equivalently, $C \in \G$. \par
Let $|U| > 2$. If $U$ is a 2-saturating set of $C$, then $C \not\in \G$. Hence $|C| \ge 3$.  
Assume that there exists $v \in C \setminus U$.
If $v$ is adjacent to a vertex $w \in C$, we choose a vertex $i \in C$, $i \neq v,w$.
Then $\{v,w\} \in \G|_{C \setminus i}$.
Since $U \setminus i$ is decomposable in $\H_C(i) = \H_{C \setminus i}$,
$U \setminus i$ contains  two disjoint covers of $\G|_{C \setminus i}$ by Lemma \ref{section}.
One of these covers must contain $v$, which implies $v \in U$, a contradiction.
If $v$ is not adjacent to any vertex of $C$, then $\G|_{C \setminus v} = \G|_C$.
Since $U \setminus v$ is decomposable in $\H_C(v) = \H_{C \setminus v}$, 
$U$ is decomposable in $\H(\G|_{C \setminus v})$ by Lemma \ref{section}. 
Since  $\H(\G|_{C \setminus v}) = \H(\G|_C)$, 
Lemma \ref{section} also shows that $U$ is decomposable in $\H_C$, a contradiction.
So we get $U=C$.  
Since $U$ is indecomposable in $\H_C$, $U$ is indecomposable in $\H(\G|_C)$.
Therefore, $\G|_C$ is not bipartite. Hence $\G|_C$ has an induced odd cycle $D$.
If $C$ is not an odd cycle, there is a vertex $i \in C \setminus D$. Then $D$ is an odd cycle in 
$\G|_{C \setminus i}$. This implies that $\G|_{C \setminus i}$ doesn't have two disjoint covers. 
Thus, $U \setminus i$ is indecomposable in $\H_{C \setminus i} = \H_C(i)$, a contradiction.
\par
Conversely, if $C$ is an odd cycle, it is easy to see that $C$ is indecomposable in $\H_C$. 
and $C \setminus i$ is decomposable in $\H_C(i)$. Hence $C$ is 2-saturating in $C$.
\end{proof}

From the above description of 2-saturating sets we immediately obtain the following combinatorial characterization 
of the associated primes of the second power of a cover ideal, which was obtained by Francisco-Ha-Van Tuyl  by different arguments.

\begin{Corollary} \label{FHV} \cite[Theorem 1.1]{FHV}
Let $I$ be the cover ideal of a graph $\G$. Let $C$ be a subset of $V$.
Then $P_C$ is an associated prime ideal of $I^2$ if and only if $C$ is an edge or an odd cycle of $\G$.
\end{Corollary}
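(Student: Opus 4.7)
The plan is that this corollary falls out immediately by chaining the two main results of this section. First, by construction the cover ideal $I$ of $\G$ is the edge ideal of the hypergraph $\H := \H(\G)$ whose edges are the vertex covers of $\G$. Therefore Theorem \ref{asso} applies verbatim and translates the question into a combinatorial one: $P_C$ is an associated prime of $I^2$ if and only if the induced subhypergraph $\H(\G)_C$ admits a 2-saturating set.

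Second, Proposition \ref{odd} already enumerates precisely when such a 2-saturating set exists, in terms of $\G$ itself: it exists if and only if either $C$ is an edge of $\G$ (in which case one may take $|U| \le 1$, as in Lemma \ref{minimal cover} applied to the minimal covers of $\H(\G)$, which are exactly the edges of $\G$), or $C$ is an induced odd cycle of $\G$ (in which case $U = C$). Concatenating these two equivalences is essentially the whole argument; one simply needs to note that "odd cycle" in the corollary is to be read as "induced odd cycle" in the sense of Francisco--Ha--Van Tuyl, which is precisely what appears in Proposition \ref{odd}.

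There is no real obstacle remaining, since the substantive combinatorial work has already been carried out in Proposition \ref{odd}, which used Lemma \ref{section} to pass between $\H(\G)_C$ and $\H(\G|_C)$ and thereby tied the 2-saturating condition directly to the bipartiteness of $\G|_C$ and to the absence of chords. The role of the corollary is then only to package the result in the classical language of edges and odd cycles of $\G$, and to record that the combinatorial characterization of Francisco, Ha, and Van Tuyl is recovered as an essentially one-line application of our general framework.
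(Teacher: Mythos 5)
Your proposal is correct and follows exactly the paper's own argument: the proof given there is the one-line concatenation of Theorem \ref{asso} (reducing associated primes of $I^2$ to the existence of a 2-saturating set of $\H(\G)_C$) with Proposition \ref{odd} (which identifies those sets with edges and induced odd cycles of $\G$). Your additional remarks on minimal covers and on reading ``odd cycle'' as ``induced odd cycle'' are accurate glosses, not deviations.
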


\begin{proof} 
The assertion immediately follows from Theorem \ref{asso} and Proposition \ref{odd}.
\end{proof}

Furthermore, we are able to describe the complexes $\D_\a(I^2)$, 
which encode information on the depth of $R/I^2$.

\begin{Lemma} \label{facet 3}
Let $I$ be the cover ideal of a graph $\G$ and $\a \in \NN^n$. Then
$$\F(\D_\a(I^2)) = \{F|\ \overline F \in \G \text{ with } \overline F \not\subseteq H_\a 
\text{ or } \overline F \subseteq H_\a \text{ is an odd cycle of } \G\}.$$
\end{Lemma}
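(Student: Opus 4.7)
The plan is to combine Lemma \ref{facet 2} with Proposition \ref{odd} and do a short case analysis. Since $I$ is the edge ideal of the hypergraph $\H(\G)$ of covers of $\G$, Lemma \ref{facet 2} tells us that $F \in \F(\D_\a(I^2))$ if and only if $U := H_\a \cap \overline F$ is a 2-saturating set of the cover $\overline F$ in $\H(\G)$. So the whole task reduces to translating ``$H_\a \cap \overline F$ is a 2-saturating set of $\overline F$'' into the combinatorics of $\G$.

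This is exactly what Proposition \ref{odd} provides. Applied to $C = \overline F$ and $U = H_\a \cap \overline F$, it asserts that such a 2-saturating set exists in precisely one of two mutually exclusive regimes: either (i) $|U| \le 1$ and $\overline F \in \G$, or (ii) $U = \overline F$ and $\overline F$ is an induced odd cycle of $\G$. In regime (i) the cover $\overline F$ has exactly two vertices (it is an edge of $\G$), so the inequality $|H_\a \cap \overline F| \le 1$ is equivalent to the statement $\overline F \not\subseteq H_\a$. In regime (ii) the equation $H_\a \cap \overline F = \overline F$ is just $\overline F \subseteq H_\a$. Disjoining the two regimes reproduces exactly the right-hand side of the claimed identity.

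There is essentially no obstacle; the two ingredients are already in hand and the translation is purely formal. The only point that deserves a moment's attention is that the two regimes are disjoint and together exhaust all possibilities (an edge $\overline F \in \G$ has only two vertices, so it cannot simultaneously be an induced odd cycle), which ensures that the resulting description of $\F(\D_\a(I^2))$ is unambiguous.
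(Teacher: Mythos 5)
Your argument is correct and is essentially the paper's own proof: both reduce via Lemma \ref{facet 2} to the condition that $H_\a \cap \overline F$ be a 2-saturating set of the cover $\overline F$ of $\H(\G)$, and then translate that condition through Proposition \ref{odd} by the same two-case analysis ($|H_\a\cap\overline F|\le 1$ versus $H_\a\cap\overline F=\overline F$). Your added remark that the two regimes are disjoint and exhaustive is a harmless refinement of what the paper leaves implicit.
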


\begin{proof} 
By Lemma \ref{facet 2} we have
$$\F(\D_\a(I^2)) = \{F|\  H_\a \cap \overline F \text{ is a 2-saturating set of } \overline F\}.$$
Therefore using Proposition \ref{odd} we can see that $F \in \F(\D_\a(I^2))$ with $|H_\a \cap \overline F| \le 1$ if and only if $\overline F \in \G$ with $\overline F \not\subset H_\a$ and that $F \in \F(\D_\a(I^2))$ with $|H_\a \cap \overline F| > 1$ if and only if $\overline F \subseteq H_\a$ and $\overline F$ is an odd cycle of $\G$.
\end{proof}

To obtain a combinatorial criterion for $\depth R/I^2 > 1$ 
we need to know when there is a disconnected complex $\D_\a(I^2)$.

\begin{Lemma} \label{connect 2}
Let $I$ be the cover ideal of a graph $\G$ on $V =[n]$, $n \ge 4$.
Then there exists $\a \in \NN^n$ such that  $\D_\a(I^2)$ is disconnected if and only if one of the following conditions are satisfied:\par
{\rm (a)} $\G$ is the union of two disjoint edges or a path of length 3, \par
{\rm (b)} $\G$ has an induced odd cycle of length $n-1$ and the remained vertex is connected to this cycle,\par
{\rm (c)} $\G$ is the disjoint union of an induced odd cycle of length $n-2$ and an edge,\par
{\rm (d)} $\G$ has two induced odd cycles on vertex sets $C,D$ such that $V = C \cup D$. 
\end{Lemma}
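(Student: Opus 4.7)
The plan is to apply Lemma \ref{facet 3} to reduce the connectedness question to a combinatorial problem about unions of edges and induced odd cycles of $\G$. Since $\rho_j(I^2) = 2$ for every $j$, Theorem \ref{Takayama} lets me restrict to $\a \in \{0,1\}^n$, so that $\D_\a(I^2)$ depends only on $H := H_\a = \supp \a$. Write $\E(H)$ for the set consisting of the edges of $\G$ not contained in $H$ together with the induced odd cycles of $\G$ contained in $H$; by Lemma \ref{facet 3} the facets of $\D_\a(I^2)$ are exactly the sets $V \setminus E$ for $E \in \E(H)$. Two such facets meet if and only if $E \cup E' \neq V$, so $\D_\a(I^2)$ is disconnected if and only if $\E(H)$ admits a bipartition $\E_1 \sqcup \E_2$ into nonempty classes with $E \cup E' = V$ for every cross-pair.

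For the sufficiency direction, in each of (a)--(d) I would exhibit an explicit $H$ together with such a bipartition. In (a), if $\G$ is two disjoint edges I take $H = \emptyset$; if $\G$ is the path $v_1v_2v_3v_4$ I take $H = \{v_2,v_3\}$, which absorbs the middle edge and leaves only the two end-edges in $\E(H)$. In (b), with $C$ the induced odd $(n-1)$-cycle and $u$ the remaining vertex, I take $H = C$: the induced hypothesis forces $\E(H) = \{C\} \cup \{E \in \G : u \in E\}$, and pairing $\{C\}$ against the rest works because $C \cup \{u,v\} = V$ for every $u$-incident edge. In (c) I again take $H = C$, so disjointness gives $\E(H) = \{C,E\}$ with $C \cup E = V$. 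In (d) I take $H = V$ and use $(C,D)$ as the witness, assigning any further induced odd cycle to the class with which it forms a cover of $V$.

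For the necessity direction, I would pick a cross-pair $E \in \E_1$, $E' \in \E_2$ with $E \cup E' = V$ and split into four cases according to the types of $E$ and $E'$. If both are edges then $|E \cup E'| \le 4$ forces $n = 4$ and $E,E'$ disjoint; inspecting which additional edges of $\G$ can be absorbed into the bipartition forces $\G$ to be exactly two disjoint edges or a path of length three, yielding (a). If one is an edge and the other an induced odd cycle of length $\ell$, then $|E \cup E'| = n$ together with parity leaves only $\ell = n-1$ (giving (b) after identifying the pendant vertex $u$) or $\ell = n-2$ (giving (c) once bridging edges between the cycle and the disjoint edge are ruled out). If both are induced odd cycles then they both lie in $H$, so $H = V$, and setting $C = E$, $D = E'$ yields (d).

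The main obstacle is the ruling-out step in each case of necessity: to pin down the precise structure claimed in (a)--(d), I must verify that every further element of $\E(H)$ fits into the bipartition, which forces strong restrictions on the edges and induced odd cycles of $\G$. The induced property of the odd cycles is crucial here, since it eliminates chords and constrains which extra edges can survive in $\E(H)$; in case (c) it is essential to rule out bridging edges between the disjoint cycle and edge; in case (d) the further induced odd cycles must cooperate with $C$ or $D$ via union-equals-$V$ relations, and this is what distinguishes the genuine disconnecting configurations from graphs like $K_n$ which have many induced odd cycles but no effective bipartition.
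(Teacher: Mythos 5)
Your reduction via Lemma \ref{facet 3} to a bipartition of the family $\E(H)$ (edges not contained in $H$ together with induced odd cycles contained in $H$) with all cross-unions equal to $V$ is exactly the paper's mechanism: the paper picks two disjoint facets $F,G$, i.e.\ a cross-pair $\overline F,\overline G$ with $\overline F\cup\overline G=V$, and runs the same three-way case analysis on the types of $\overline F$ and $\overline G$. Your necessity argument and your sufficiency witnesses for (a), (b), (c) match the paper's choices of $\a$ (up to the harmless substitution $H=\emptyset$ for two disjoint edges) and are sound.

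The genuine gap is in your sufficiency argument for (d). You take $H=V$, put $C$ and $D$ in opposite classes, and propose ``assigning any further induced odd cycle to the class with which it forms a cover of $V$.'' This step fails: a further induced odd cycle need not form a cover of $V$ with either $C$ or $D$. Concretely, for $\G=K_5$ with $C=\{1,2,3\}$, $D=\{3,4,5\}$, the triangle $E=\{2,3,4\}$ satisfies $E\cup C\neq V$ and $E\cup D\neq V$, so it cannot be placed in either class; one checks that the facet graph of $\D_{\1}(I^2)$ (all $2$-subsets of $[5]$, adjacent when they meet) is connected, and the same holds for every other $\a$. Thus $K_5$ satisfies (d) as literally stated but has no disconnected $\D_\a(I^2)$, so no argument can close this gap without reinterpreting (d). You clearly sensed the problem (your closing remark about $K_n$ ``having many induced odd cycles but no effective bipartition''), but you did not resolve the resulting tension. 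For what it is worth, the paper's own proof has the same defect: its sufficiency step (d$'$) silently replaces ``$\G$ \emph{has} two induced odd cycles with $C\cup D=V$'' by ``$\G$ \emph{is} the union of two such cycles,'' a strictly stronger hypothesis, while the necessity step only delivers the weaker one. A correct write-up must either strengthen (d) to a condition guaranteeing that \emph{every} element of $\E(V)$ fits the bipartition (as your own reformulation makes precise), or prove that the weaker and stronger versions coincide under the standing assumptions, which the $K_5$ example shows they do not.
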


\begin{proof} 
Assume that there exists $\a \in \NN^n$ such that  $\D_\a(I^2)$ is disconnected. 
Let $F$ and $G$ be two disjoint facets of $\D_\a(I^2)$.
Then $V = \overline F \cup \overline G.$
Therefore, $H_\a = (H_\a \cap \overline F) \cup (H_\a \cap \overline G)$. \par
 
If $\overline F, \overline G \in \G$, then $|V| = 4$. 
By Lemma \ref{minimal cover}, $|H_\a \cap \overline F| \le 1$ and $|H_\a \cap \overline G| \le 1$.
Therefore, $|H_\a| \le 2$. Hence $H_\a$ doesn't contain any odd cycle. By Lemma \ref{facet 3},
$$\F(\D_\a(I^2)) = \{F|\ \overline F \in \G \text{ with } \overline F \not\subseteq H_\a\}.$$
Since $\D_\a(I^2)$ is disconnected, $\F(\D_\a(I^2))$ must be the union of two disjoint edges. 
Let $\G^*$ be the graph of the complements of the edges of $\G$.
If  $H_\a$ is not an edge of $\G$,  $\F(\D_\a(I^2)) = \G^*$. 
Hence $\G$ must be the union of two disjoint edges. 
If  $H_\a $ is an edge of $\G$,  say $\{1,2\}$, then $\F(\D_\a(I^2)) = \G^* \setminus \{3,4\}$. 
Since  $\F(\D_\a(I^2)) $ is the union of two disjoint edges, $\G^*$ and therefore $\G$ is a path of length 3. 
So (a) is satisfied in this case.\par

If only one the sets $\overline F, \overline G$ is an edge of $\G$, say $\overline G$, then $|V| \le |\overline F|+2$. 
By Lemma \ref{facet 3}, $\overline F$ is an induced odd cycle of $\G$.
If $|V| = |\overline F| +1$, then $|\overline F| = n-1$ and the remained vertex is connected to $\overline F$ at least by $\overline G$.
Hence (b) is satisfied.
If $|V| = |\overline F| +2$, then $|\overline F| = n-2$ and $\overline F \cap \overline G = \emptyset$.
Hence $V = F \cup G$.  Since $\D_\a(I^2)$ is disconnected,  $\D_\a(I^2)$ has only two facets $F$ and $G$. Thus, 
$\G$ must be the union of the induced odd cycle on $\overline F$ and the edege $\overline G$.  Hence (c) is satisfied. \par
 
 If $\overline F, \overline G \not\in \G$, then $\overline F, \overline G$ are induced odd cycles of $\G$ by Lemma \ref{facet 3}.
 Since $V = \overline F \cup \overline G$, this implies (d).  This proves the necessary part of the assertion. \par
 
 Conversely, assume that one of the conditions (a) to (d) is satisfied.
 Using Lemma \ref{facet 3}, one can easily show that $\D_\a(I^2)$ is disconnected in the following cases:\par
 (a) $\G$ is the union of two edges $\{1,2\}, \{3,4\}$ and $\a = (1,0,1,0)$ or  $\G$ is the path $\{1,2\}, \{2,3\}, \{3,4\}$ 
and $\a = (0,1,1,0)$, \par
 (b') $\G$ has an induced odd cycle on the vertex set  $\{1,...,n-1\}$ 
and the vertex $n$ is connected to this cycle and $\a = (1,...,1,0)$,\par
 (c') $\G$ is the union of an induced odd cycle on the vertex set $\{1,...,n-2\}$
and the edge $\{n-1,n\}$ and $\a = (1,...,1,0,0)$,\par
 (d') $\G$ is the union of two induced odd cycles on vertex sets $C,D$ with $V = C \cup D$ and $\a = (1,...,1)$. \par
 This completes the proof of Lemma \ref{connect 2}.
\end{proof}

\begin{Theorem} \label{depth > 1 cover}
Let $I$ be the cover ideal of a graph $\G$ on the vertex set $V = [n]$, $n \ge 4$.
Then $\depth R/I^2 > 1$ if and only if the following conditions are satisfied:\par
{\rm (a)} $\G$ is not the union of two disjoint edges or a path of length 3, \par 
{\rm (b)} $\G$ has no induced odd cycle of length $n-1$, \par 
{\rm (c)} $\G$ is not a disjoint union of an induced odd cycle of length $n-2$ and an edge, \par
{\rm (d)} $\G$ has no pair of induced odd cycles with vertex set $C,D$ such that $V = C \cup D$,\par
{\rm (e)} $\G$ is not an odd cycle.
\end{Theorem}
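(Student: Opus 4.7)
The plan is to use the splitting $\depth R/I^2 > 1$ if and only if $H_\mm^0(R/I^2) = H_\mm^1(R/I^2) = 0$, and to handle the two vanishings with the tools developed in Sections 2 and 4, now specialized to the cover-ideal situation.

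For the vanishing of $H_\mm^0(R/I^2)$ I would invoke Theorem \ref{positive depth}, so the condition becomes that $\H(\G)$ has no 2-saturating set. Taking $C = V$ in Proposition \ref{odd} and using $n \ge 4$ to rule out the ``$C$ is an edge of $\G$'' alternative, the only surviving 2-saturating set is $U = V$ itself, which occurs exactly when $\G$ is an induced odd cycle. Thus $H_\mm^0 = 0$ corresponds precisely to condition (e).

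For $H_\mm^1(R/I^2) = 0$, Proposition \ref{h1} splits the condition into (i) $\D_\a(I^2)$ is connected for every $\a \in \NN^n$, and (ii) $\depth R_j/(I^2)_j > 0$ for every $j \in V$. Condition (i) is exactly the negation of Lemma \ref{connect 2}, producing (a), (c), (d) together with the weaker form of (b) that additionally requires the remaining vertex to be connected to the induced odd cycle of length $n-1$. For (ii), I would first check that setting $x_j = 1$ kills precisely those $P_e$ with $j \in e$, so $I_j = \bigcap_{e \in \G,\ j \notin e} P_e$ is the cover ideal of the induced subgraph $\G \setminus j$ on $V \setminus j$, and $(I^2)_j = (I_j)^2$ (verified on monomial generators). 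Running the argument of the previous paragraph on $\G \setminus j$ (which has $n - 1 \ge 3$ vertices) then shows that (ii) holds iff no induced subgraph of $\G$ on $n - 1$ vertices is an odd cycle, which is exactly the theorem's (b).

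The main obstacle is the reconciliation: condition (b) of the theorem is strictly stronger than what emerges from Lemma \ref{connect 2} alone. Precisely, when $\G$ has an induced odd cycle of length $n - 1$ with the remaining vertex $v$ isolated, every $\D_\a(I^2)$ remains connected while $\G \setminus v$ is literally a cycle, so $\depth R_v/(I_v)^2 = 0$ and (ii) fails at $j = v$. Once (i) supplies (a), (c), (d) together with the weak form of (b), and (ii) promotes that to the full (b), combining with (e) gives the claimed equivalence.
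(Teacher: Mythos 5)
Your proposal is correct and follows essentially the same route as the paper: reduce to the vanishing of $H_\mm^0$ and $H_\mm^1$, handle $H_\mm^0$ via the 2-saturating-set/odd-cycle criterion (giving (e)), and handle $H_\mm^1$ via Proposition \ref{h1}, with Lemma \ref{connect 2} supplying the connectedness conditions and the localized ideals $I_j$ (identified as cover ideals of the induced subgraphs, which the paper does via Lemma \ref{section}) supplying condition (b). Your explicit reconciliation of the theorem's (b) with the weaker clause (b) of Lemma \ref{connect 2} is exactly the point the paper passes over tersely, and you resolve it the same way the paper implicitly does.
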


\begin{proof} 
By Corollary \ref{FHV}, $\depth R/I^2 > 0$ if and only if (e)  is satisfied.
Since $\depth R/I^2 = \min\{i|\ H_\mm^i(R/I^2) \neq 0\}$, 
it suffices to show that $H_\mm^1(R/I^2) = 0$ if and only if (a) to (d) are satisfied. 
By Proposition \ref{h1}, $H_\mm^1(R/I^2) = 0$  if and only if 
$\D_\a(I^2)$ is connected  for all $\a \in \NN^n$ and $\depth R_j/I_j> 0$ for all $j \in V$, 
where $R_j = k[x_i| i \neq j]$ and $I_j = IR[x_j^{-1}]\cap R_j$. 
Let $\H$ denote hypergraph of the covers of $\G$. Then $I_j$ is the edge ideal of
$\H_{V \setminus j}$. By Lemma \ref{section}, $I_j$ is also the cover ideal of the graph $\G|_{V \setminus j}$.
Hence $\depth R_j/I_j> 0$ if and only if $V \setminus j$ is not an odd cycle by Corollary \ref{FHV}. 
This condition holds for all $j \in V$ if and only if $\G$ has no induced odd cycle of length $n-1$.
Therefore, we only need to show that $\D_\a(I^2)$ is connected for all $\a \in \NN^n$ if and only if (a) to (e) are satisfied. 
But this follows from Lemma \ref{connect 2}.
\end{proof}

Recall that a ring $S$ satisfies Serre's condition $(S_2)$ if $\depth S_P \ge \min\{2,\height P\}$ for every prime ideal $P$ of $S$.
Rinaldo, Terai and Yoshida asked whether for  a squarefree monomial ideal $I$, $R/I^2$ is Cohen-Macaulay if $R/I^2$ 
satisfies $(S_2)$ \cite[Question 3.1]{RTY1}. 
If $I$ is a cover ideal, we give a positive answer to this question by proving the following stronger implication. 

\begin{Theorem} \label{S2 cover}
Let $I$ be the cover ideal of a graph $\G$. Then $R/I^2$ satisfies Serre's condition $(S_2)$ if and only if
$I$ is a complete intersection.
\end{Theorem}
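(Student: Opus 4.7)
The plan is to prove the easy direction by a filtration argument and the converse in three steps: reduce to $\G$ bipartite via unmixedness, localize $(S_2)$ at 4-element monomial primes and apply Theorem~\ref{depth > 1 cover} to rule out induced $2K_2$ and $P_4$, and finish with a short combinatorial classification.

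For the forward direction, if $I=(f_1,\ldots,f_r)$ is a regular sequence then $I^i/I^{i+1}$ is free over $R/I$, so the short exact sequence $0\to I/I^2\to R/I^2\to R/I\to 0$ exhibits $R/I^2$ as Cohen-Macaulay of dimension $\dim R/I$; in particular $R/I^2$ satisfies $(S_2)$.

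Converse: assume $R/I^2$ is $(S_2)$. Then $(S_1)$ holds, so by Corollary~\ref{FHV} no odd cycle of $\G$ gives an embedded associated prime of $I^2$, which means $\G$ is bipartite. Next, for any $S\subseteq V$ with $\G|_S$ containing an edge, flat base change along $R_S=k[x_i:i\in S]\hookrightarrow R$ followed by localization at $P_S$ identifies $(R/I^2)_{P_S}$ with $R_{P_S}/I(\G|_S)^2 R_{P_S}$ (since edges $e\not\subseteq S$ extend to units) and yields $\depth(R/I^2)_{P_S}=\depth R_S/I(\G|_S)^2$ together with $\dim(R/I^2)_{P_S}=|S|-2$. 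Consequently $(S_2)$ at $P_S$ with $|S|=4$ forces $\depth R_S/I(\G|_S)^2\geq 2$. Applying Theorem~\ref{depth > 1 cover} to the bipartite 4-vertex graph $\G|_S$ (conditions (b)--(e) being vacuous in this setting), only (a) survives: $\G|_S$ is neither $2K_2$ nor a path of length 3.

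The combinatorial classification then says that a bipartite graph with no induced $2K_2$ and no induced $P_4$ is a complete bipartite graph plus isolated vertices. Indeed, with bipartition $A,B$ and $A',B'$ the non-isolated vertices in each part, if $a\in A'$ and $b\in B'$ were non-adjacent, then picking $b_0\in N(a)$ and $a_0\in N(b)$ gives $a\ne a_0$ and $b\ne b_0$, and the induced subgraph on $\{a,a_0,b,b_0\}$ is $P_4$ or $2K_2$ according to whether $\{a_0,b_0\}$ is an edge, a contradiction. Hence $\G|_{A'\cup B'}=K_{|A'|,|B'|}$; the only minimal vertex covers of this graph are $A'$ and $B'$, so $I=(x^{A'},x^{B'})$ is a regular sequence in disjoint variables, and hence a complete intersection. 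The small cases $n\leq 3$ reduce at once to unmixedness, which forces $\G$ bipartite on at most three vertices, and direct inspection in each case shows $I$ is a complete intersection. The main technical obstacle is the flat-base-change identification of the depth of $(R/I^2)_{P_S}$ with $\depth R_S/I(\G|_S)^2$; once this is in place, the application of Theorem~\ref{depth > 1 cover} on 4-vertex induced subgraphs and the combinatorial wrap-up are elementary.
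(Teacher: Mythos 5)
Your proof is correct and follows essentially the same route as the paper's: deduce bipartiteness from the absence of embedded primes via Corollary \ref{FHV}, reduce to four-vertex induced subgraphs and apply Theorem \ref{depth > 1 cover}(a) to force every pair of disjoint edges into a 4-cycle, and conclude that $\G$ is complete bipartite, hence $I=(x^{A'},x^{B'})$ is a complete intersection. The only real difference is in how the $(S_2)$ condition is transferred to the four-vertex restriction --- the paper invokes the link lemma of \cite{TT} while you use flat base change and localization at $P_S$ --- but these amount to the same reduction, and your explicit verification of the combinatorial step (bipartite with no induced $2K_2$ or $P_4$ implies complete bipartite plus isolated vertices) fills in a claim the paper leaves to the reader.
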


\begin{proof} 
It is well-known that $R/I^2$ is Cohen-Macaulay if $I$ is a complete intersection.
Therefore, it remains to show that if $R/I^2$ satisfies $(S_2)$, then $I$ is a complete intersection.
If $n = 2$, then $I$ is a complete intersection.
If $n = 3$, then $R/I^2$ satisfies $(S_2)$ if $\depth R/I^2 > 0$. 
By Corollary \ref{FHV}, this condition is satisfied if $\G$ is not a 3-cycle. 
Hence $\G$ is a path of length 2 or 3, which implies that $I$ is a complete intersection. \par
 Assume that $n \ge  4$ and $R/I^2$ satisfies $(S_2)$.
Then $I^2$ has no embedded associated prime. 
Hence $\G$ has no induced odd cycle by Corollary \ref{FHV}.
This means that $\G$ is a bipartite graph. 
We prove now that every pair of disjoint edges $F,G$ of $\G$ is contained in a 4-cycle.  
Set $S = k[x_i|\ i \in F \cup G]$ and $J = IR[x_i^{-1}|\ i \not\in F \cup G]\cap R$.
Let $\H$ be the hypergraph of the covers of $\G$. Then $J$ is the edge ideal of $\H_{F \cup G}$.
By Lemma \ref{section}, $J$ is also the cover ideal of the graph $\G' := \G|_{F \cup H}$. 
Let $\D$ and $\D'$ be the simplicial complexes whose facets are the complements of the edges of $\G$ and $\G'$ in their vertex sets, respectively. Then $I$ and $J$ are the Stanley-Reisner ideals of $\D$ and $\D'$.
Moreover, $\D'$ is the link of $V \setminus (F \cup G)$. Therefore, by \cite[Lemma 4.2]{TT}, 
$S/J^2$ also satisfies $(S_2)$.
Since $\G'$ is a graph on 4 vertices, this implies $\depth S/J^2 = 2$.
By Theorem \ref{depth > 1 cover} (a) and (e), $\G'$ must be a 4-cycle.
Thus, every pair of disjoint edges of $\G$ is contained in a 4-cycle.  
Since $\G$ is a bipartite graph, this implies that $\G$ is a complete bipartite graph.
Hence $I$ is a complete intersection.
\end{proof}

As examples we compute the depth of $R/I^2$ for the cover ideal of all graphs of $4,5$ vertices.

\begin{Example}
{\rm For $n = 4$ we have  $\dim R/I^2 = 2$.  
By Corollary \ref{FHV}, $\depth R/I^2 > 0$ because the graph can't be an odd cycle.
By Theorem \ref{S2 cover}, $R/I^2$ is Cohen-Macaulay, if and only if $I$ is a complete intersection.
This means that the graph is complete bipartite. Therefore, $\depth R/I^2 = 1$ except
the following two cases, where $\depth R/I^2 = 2$.

\begin{picture}(0,75)
\put(100,20){\line(5,2){50}}
\put(100,60){\line(5,-2){50}}
\put(100,40){\line(1,0){50}}
\put(100,20){\circle{2}}
\put(100,60){\circle{2}}
\put(100,40){\circle{2}}
\put(150,40){\circle{2}}

\put(250,20){\line(1,0){40}}
\put(250,20){\line(0,1){40}}
\put(250,60){\line(1,0){40}}
\put(290,20){\line(0,1){40}}
\put(250,20){\circle{2}}
\put(250,60){\circle{2}}
\put(290,20){\circle{2}}
\put(290,60){\circle{2}}
\end{picture}

For $n = 5$ we have $\dim R/I^2 = 3$. By Corollary \ref{FHV}, $\depth R/I^2 = 0$ if and only if the graph is a 5-cycle.
By Theorem \ref{depth > 1 cover},  $\depth R/I^2 = 1$ if and only if the graph is a disjoint union of a triangle and an edge or the union of two triangles meeting at a vertex:

\begin{picture}(0,75)
\put(100,20){\line(2,1){40}}
\put(100,60){\line(2,-1){40}}
\put(100,20){\line(0,1){40}}
\put(160,20){\line(0,1){40}}
\put(100,20){\circle{2}}
\put(100,60){\circle{2}}
\put(140,40){\circle{2}}
\put(160,20){\circle{2}}
\put(160,60){\circle{2}}

\put(230,20){\line(2,1){40}}
\put(230,20){\line(0,1){40}}
\put(230,60){\line(2,-1){40}}
\put(310,20){\line(-2,1){40}}
\put(310,20){\line(0,1){40}}
\put(310,60){\line(-2,-1){40}}
\put(230,20){\circle{2}}
\put(230,60){\circle{2}}
\put(270,40){\circle{2}}
\put(310,20){\circle{2}}
\put(310,60){\circle{2}}
\end{picture}

\noindent In all other cases, $\depth R/I^2 = 2$ except the following two cases when  the graph is complete bipartite and $\depth R/I^2 = 3$  by Theorem \ref{S2 cover}.}

\begin{picture}(0,90)
\put(100,20){\line(5,3){50}}
\put(100,60){\line(5,-1){50}}
\put(100,80){\line(5,-3){50}}
\put(100,40){\line(5,1){50}}
\put(100,20){\circle{2}}
\put(100,40){\circle{2}}
\put(100,60){\circle{2}}
\put(100,80){\circle{2}}
\put(150,50){\circle{2}}

\put(240,30){\line(5,1){50}}
\put(240,30){\line(5,3){50}}
\put(240,50){\line(5,-1){50}}
\put(240,50){\line(5,1){50}}
\put(240,70){\line(5,-1){50}}
\put(240,70){\line(5,-3){50}}
\put(240,30){\circle{2}}
\put(240,50){\circle{2}}
\put(240,70){\circle{2}}
\put(290,40){\circle{2}}
\put(290,60){\circle{2}}
\end{picture}
\end{Example}

\end{document}